\documentclass{amsart}
\usepackage{amsmath}
\usepackage{amsthm,amscd,amssymb,amsfonts, amsbsy}
\usepackage{latexsym}
\usepackage{pxfonts, mathrsfs}

\usepackage{color}
\usepackage{enumerate}

\numberwithin{equation}{section}

\theoremstyle{plain}
\newtheorem{theorem}[equation]{Theorem}
\newtheorem{lemma}[equation]{Lemma}

\theoremstyle{definition}
\newtheorem{definition}[equation]{Definition}

\newtheorem{remark}[equation]{Remark}
\newtheorem*{acknowledgment}{Acknowledgment}

\theoremstyle{remark}

\newcommand{\dv}{\operatorname{div}}

\newcommand{\dist}{\operatorname{dist}}
\newcommand{\diam}{\operatorname{diam}}

\newcommand{\curl}{\operatorname{curl}}

\renewcommand{\vec}[1]{\boldsymbol{#1}}

\newcommand{\bR}{\mathbb R}

\newcommand{\bZ}{\mathbb Z}

\newcommand\cD{\mathcal{D}}

\newcommand\cL{\mathscr{L}}

\newcommand\cS{\mathcal{S}}

\newcommand\cLt{{}^t\!\mathscr{L}}

\newcommand{\VMO}{\mathrm{VMO}}

\newcommand{\ip}[1]{\left\langle#1\right\rangle}

\providecommand{\set}[1]{\{#1\}}

\providecommand{\bigset}[1]{\bigl\{#1\bigr\}}

\providecommand{\abs}[1]{\lvert#1\rvert}
\providecommand{\Abs}[1]{\left\lvert#1\right\rvert}

\providecommand{\norm}[1]{\lVert#1\rVert}

\renewcommand{\epsilon}{\varepsilon}
\renewcommand{\qedsymbol}{$\blacksquare$}

\begin{document}
\title[Elliptic systems with measurable coefficients]
{Elliptic systems with measurable coefficients of the type of Lam\'e system in three dimensions.}

\author[K. Kang]{Kyungkeun Kang}
\address[K. Kang]{Department of Mathematics, Yonsei University, Seoul 120-749, Republic of Korea}
\email{kkang@yonsei.ac.kr}

\author[S. Kim]{Seick Kim}
\address[S. Kim]{Department of Mathematics, Yonsei University, Seoul 120-749, Republic of Korea}
\email{kimseick@yonsei.ac.kr}

\subjclass[2010]{Primary 35J47, 35B45; Secondary 35J57}

\keywords{H\"older estimate; global regularity; elliptic systems; measurable coefficients}

\begin{abstract}
We study the $3 \times 3$ elliptic systems $\nabla \times (a(x) \nabla\times u)-\nabla (b(x) \nabla \cdot u)=f$, where the coefficients $a(x)$ and $b(x)$ are positive scalar functions that are measurable and bounded away from zero and infinity.
We prove that weak solutions of the above system are H\"older continuous under some minimal conditions on the inhomogeneous term $f$.
We also present some applications and discuss several related topics including estimates of the Green's functions and the heat kernels of the above systems.
\end{abstract}

\maketitle

\section{Introduction}
\label{sec:intro}
In this article, we are concerned with the system of equations
\begin{equation}					\label{eq1.5ee}
\nabla\times (a(x)\nabla\times \vec u)-\nabla(b(x)\nabla \cdot \vec u)= \vec f \quad\text{in }\;\Omega,
\end{equation}
where the unknown $\vec u=(u^1,u^2, u^3)$ and the inhomogeneous term $\vec f=(f^1,f^2,f^3)$ are  vector valued functions defined on a (possibly unbounded) domain $\Omega\subseteq \bR^3$, and the coefficients $a(x)$ and $b(x)$ are positive scalar functions on $\Omega$ that are measurable and bounded away from zero and infinity.
It should be noted from the beginning that the above system \eqref{eq1.5ee} is elliptic.
As a matter of fact, the following vector identity
\begin{equation}					\label{eq0.2ab}
\nabla \times (\nabla \times \vec u)- \nabla (\nabla \cdot \vec u)= -\Delta \vec u
\end{equation}
implies that in the case when $a$ and $b$ are constants, the above system reduces to
\[
-a\Delta \vec u+(a-b) \nabla (\nabla \cdot \vec u)=\vec f\quad\text{in }\;\Omega,
\]
which (under the assumption that $a>0$ and $b>4a/3$) becomes the Lam\'e system of linearized elastostatics in dimension three; see e.g., Dahlberg et al. \cite{DKV}.
A special case of the system \eqref{eq1.5ee} is the following system
\begin{equation}				\label{eq0.1aa}
\nabla\times (a(x)\nabla\times \vec u)=0,\quad  \nabla \cdot \vec u=0\quad\text{in }\;\Omega,
\end{equation}
which arises from Maxwell's equations in a quasi-static electromagnetic field, where the displacement of the electric current is neglected; see e.g., Landau et al. \cite[Ch.~VII]{LLP}.
In \cite{KK02}, the authors proved that  weak solutions of the system \eqref{eq0.1aa} are H\"older continuous in $\Omega$; see also Yin \cite{Yin02}.
It is an interesting result because in general, weak solutions of elliptic systems with bounded measurable coefficients in dimension three or higher are not necessarily continuous; see De Giorgi \cite{DG68}.
Another motivation for studying the system \eqref{eq1.5ee} comes from an interesting article by Giaquinta and Hong \cite{GH}, where they considered the following equations involving differential forms:
\begin{equation}					\label{eq0.2hg}
d^* (\sigma(x) d A)=0,\quad -d^* A=0\quad\text{in }\;\Omega,
\end{equation}
where $\sigma(x) \in L^\infty(\Omega)$ is a function with $\sigma_1 \leq \sigma(x) \leq \sigma_2\,$, $\sigma_1$ and $\sigma_2$ being two positive constants, $A$ is a one-form, $dA$ is its exterior differential, and $d^*$ denotes the adjoint of $d$ (i.e., $d^*=\delta$, the codifferential).
Related to the well-known result of De Giorgi \cite{DG57} on elliptic equations, they raised an interesting question of whether any weak solution $A$ of the equations \eqref{eq0.2hg} is H\"older continuous in $\Omega$.
In the three dimensional setting, the equations \eqref{eq0.2hg} becomes the system \eqref{eq0.1aa}, and thus, in dimension three, a positive answer was given in \cite{KK02}.
Conversely, in terms of differential forms, the system \eqref{eq1.5ee} with $\vec f=0$ becomes
\[
d^* (a(x) dA)+d(b(x) d^* A)=0\quad\text{in }\;\Omega;\quad A=u^1 dx^1+ u^2 dx^2+ u^3 dx^3.
\]
Similar to the question raised by Giaquinta and Hong \cite{GH}, it is natural to ask whether weak solutions of the above equations are H\"older continuous in $\Omega$.
We hereby thank Marius Mitrea for suggesting this question to us.

In this article, we prove that weak solutions of the system \eqref{eq1.5ee} are H\"older continuous in $\Omega$ assuming a minimal condition on $\vec f$, and thus give a positive answer to the above question in dimension three; see Theorem~\ref{thm3.2a} below for the precise statement.
With this H\"older estimate at hand, we are able to show that there exists a unique Green's function $\vec G(x,y)$ of the system \eqref{eq1.5ee} in an arbitrary domain $\Omega\subseteq \bR^3$, and it has the natural bound
\[
\abs{\vec G(x,y)} \leq N \abs{x-y}^{-1}
\]
for all $x, y\in\Omega$ such that $0<\abs{x-y}<d_x \wedge d_y$, where $d_x:=\dist(x,\partial\Omega)$, $a\wedge b:=\min(a,b)$, and $N$ is a constant independent of $\Omega$.
In particular, when $\Omega=\bR^3$, the above estimate holds for all $x \neq y$; see Theorem~\ref{thm5.6gr} and Remark~\ref{rmk6.7gr} below.
It also follows that the heat kernel $\vec K_t(x,y)$ of the system \eqref{eq1.5ee} exists in any domain $\Omega$, and in the case when $\Omega=\bR^3$, we have the following usual Gaussian bound for $\vec K_t(x,y)$; see Theorem~\ref{thm2hk} below:
\[
\abs{\vec K_t(x,y)} \leq N t^{-3/2}\exp\{-\kappa|x-y|^2/ t \},\quad \forall t>0,\;\; x,y\in\bR^3.
\]
Another goal of this article is to establish a global H\"older estimate for weak solutions of the system \eqref{eq0.1aa} in bounded Lipschitz domains.
More precisely, we consider the following Dirichlet problem
\begin{equation}					\label{eq0.3cc}
\left\{
\begin{array}{c}
\nabla\times (a(x)\nabla\times \vec u)=\vec f +\nabla\times \vec g \quad\text{in }\;\Omega,\\
\nabla \cdot \vec u=h\quad\text{in }\;\Omega,\\
\vec u=0\quad \text{on }\;\partial\Omega,
\end{array}
\right.
\end{equation}
where $\Omega$ is a bounded, simply connected Lipschitz domain.
We prove that the weak solution $\vec u$ of the above problem \eqref{eq0.3cc} is uniformly H\"older continuous in $\overline\Omega$ under some suitable conditions on the inhomogeneous terms $\vec f$, $\vec g$, and $h$; see Theorem~\ref{thm3.1t} for the details.
This question of global H\"older regularity for weak solutions of the system \eqref{eq0.1aa} turned out to be a rather delicate problem and was not discussed at all in \cite{KK02}.
Yin addressed this issue in \cite{Yin02}, but it appears that there is a serious flaw in his proof;  he also considered a similar problem with a more general boundary condition in \cite{Yin04}, but it seems to us that his argument there regarding estimate near the boundary has a gap too.
Utilizing the above mentioned global H\"older estimate for weak solutions of the system \eqref{eq0.3cc}, we show that the Green's function $\vec G(x,y)$ of the system \eqref{eq0.1aa} in $\Omega$ has the following global bound:
\[
\abs{\vec G(x,y)}  \leq N \bigset{d_x\wedge \abs{x-y}}^{\alpha} \bigset{d_y\wedge \abs{x-y}}^{\alpha} \abs{x-y}^{-1-2\alpha},\quad \forall x, y\in \Omega,\;\; x\neq y,
\]
where $0<\alpha<1$; see Theorem~\ref{thm5.8gr} for the details.
In that case, we also have the following global estimate for the heat kernel $\vec K_t(x,y)$ of the system \eqref{eq0.1aa} in $\Omega$:
For all $T>0$, there exists a constant $N$ such that for all  $x,y \in \Omega$ and $0<t \leq T$, we have
\[
\abs{\vec K_t(x,y)} \leq N \left(1 \wedge \frac {d_x} {\sqrt {t} \vee \abs{x-y}} \right)^{\alpha} \left(1 \wedge \frac{d_y} {\sqrt {t} \vee \abs{x-y}}\right)^{\alpha}\,t^{-3/2}\exp \set{-\kappa \abs{x-y}^2/t},
\]
where $\kappa>0$ and $\alpha\in (0,1)$ are constants independent of $T$, and we used the notation $a\vee b=\max(a,b)$; see Theorem~\ref{thm3hk} below.
At the moment, it is not clear to us whether or not any global H\"older estimate is available for weak solutions of the full system \eqref{eq1.5ee} with zero Dirichlet boundary data.

The organization of the paper is as follows. In Section~\ref{sec:nd}, we introduce some related notation and definitions.
In Section~\ref{sec:main}, we state our main theorems and give a few remarks concerning extensions of them.
The proofs of our main results are given in Section~\ref{sec:pf} and some applications of them are presented in Section~\ref{sec:app}.
We devote Section~\ref{sec:green} entirely to the study of the Green's functions of the system \eqref{eq1.5ee}, and  Section~\ref{sec:p} to the investigation of the parabolic system and the heat kernels associated to the system \eqref{eq1.5ee}.

\section{Notation and Definitions}			\label{sec:nd}
\subsection{Basic notation}
The basic notation used in this article are those employed in Gilbarg and Trudinger \cite{GT}.
A Function in bold symbol such as $\vec u$ means that it is a three dimensional vector-valued function; $\nabla\cdot \vec u$ denotes $\dv \vec u$, $\nabla \times \vec u$ denotes $\curl \vec u$, and $\nabla \vec u$ denotes the gradient matrix of $\vec u$.
Throughout the article, $\Omega$ denotes a (possibly unbounded) domain in $\bR^3$ (i.e., an open connected set in $\bR^3$) and $\partial \Omega$ denotes its boundary.
For a domain $\Omega$ with $C^1$ boundary $\partial\Omega$, we denote by $\vec n$ the unit outward normal to $\partial\Omega$.
Let $L$ be the operator of the form
\[
L\vec u:=\nabla\times (a(x)\nabla\times \vec u)-\nabla(b(x)\nabla \cdot \vec u)
\]
whose coefficients are measurable functions on $\Omega$ satisfying the following condition:
\begin{equation}					\label{eq0.2bh}
\nu \leq a(x),\; b(x) \leq \nu^{-1},\quad\forall x\in\Omega,\quad \text{for some }\;\nu\in (0,1].
\end{equation}
For $x\in\Omega$ and $r>0$, we denote $B_r(x)$ the open ball of radius $r$ centered at $x$ and
\[
\Omega_r(x):= \Omega\cap B_r(x); \quad (\partial\Omega)_r(x):=\partial\Omega\cap B_r(x).
\]
We write $S' \subset\subset S$ if $S'$ has a compact closure in $S$; $S'$ is strictly contained in $S$.
\subsection{Function spaces}
The H\"older spaces $C^{k,\alpha}(\overline \Omega)$ ($C^{k,\alpha}(\Omega)$) are defined as the subspaces of $C^k(\overline \Omega)$ ($C^k(\Omega)$) consisting of functions whose $k$-th order partial derivatives are uniformly H\"older continuous (locally H\"older continuous) with exponent $\alpha$ in $\Omega$.
For simplicity we write
\[
C^{0,\alpha}(\Omega)=C^\alpha(\Omega),\quad C^{0,\alpha}(\overline\Omega)=C^\alpha(\overline \Omega),
\]
with the understanding $0<\alpha<1$ whenever this notation is used.
We set
\[
\norm{u}_{C^\alpha(\overline\Omega)}=\abs{u}_{0,\alpha;\Omega}=[u]_{\alpha;\Omega}+\abs{u}_{0;\Omega} := \sup_{\substack{x, y \in \Omega\\ x\neq y}} \frac{\abs{u(x)-u(y)}}{\abs{x-y}^\alpha}+\sup_{\Omega}\,\abs{u}.
\]
For $p\geq 1$, we let $L^p(\Omega)$ denote the classical Banach space consisting of measurable functions on $\Omega$ that are $p$-integrable.
The norm in $L^p(\Omega)$ is defined by
\[
\norm{u}_{p; \Omega}=\norm{u}_{L^p(\Omega)}=\left(\int_\Omega \abs{u}^p\,dx\right)^{1/p}.
\]
For $p \geq 1$ and $k$ a non-negative integer, we let $W^{k,p}(\Omega)$ the usual Sobolev space; i.e.
\[
W^{k,p}(\Omega)=\set{ u\in L^p(\Omega): D^\alpha u \in L^p(\Omega)\;\,\text{for all}\;\, \abs{\alpha} \leq k}.
\]
We denote by $C^\infty_0(\Omega)$ the set of all functions in $C^\infty(\Omega)$ with compact support in $\Omega$.
Some other notations are borrowed from Galdi \cite{Galdi} and Mal\'y and Ziemer \cite{MZ}.
Setting
\[
\cD=\cD(\Omega)=\bigset{\vec u \in C^\infty_0(\Omega): \nabla\cdot \vec u=0\;\text{ in }\;\Omega},
\]
for $q\in[1,\infty)$ we denote by $H_q(\Omega)$ the completion of $\cD(\Omega)$ in the norm of $L^q$.
The space $Y^{1,2}(\Omega)$ is defined as the family of all weakly differentiable functions $u\in L^{6}(\Omega)$, whose weak derivatives are functions in $L^2(\Omega)$.
The space $Y^{1,2}(\Omega)$ is endowed with the norm
\[
\norm{u}_{Y^{1,2}(\Omega)}:=\norm{u}_{L^{6}(\Omega)}+\norm{\nabla u}_{L^2(\Omega)}.
\]
If $\abs{\Omega}<\infty$, then H\"older's inequality implies that $Y^{1,2}(\Omega)\subset W^{1,2}(\Omega)$.
We define $Y^{1,2}_0(\Omega)$ as the closure of $C^\infty_0(\Omega)$ in $Y^{1,2}(\Omega)$.
In the case $\Omega = \bR^3$, we have  $Y^{1,2}(\bR^3)=Y^{1,2}_0(\bR^3)$.
Notice that by the Sobolev inequality, it follows that
\begin{equation}					\label{eqP-14}
\norm{u}_{L^{6}(\Omega)} \leq N \norm{\nabla u}_{L^2(\Omega)},\quad \forall u\in Y^{1,2}_0(\Omega).
\end{equation}
Therefore, we have $W^{1,2}_0(\Omega)\subset Y^{1,2}_0(\Omega)$ and $W^{1,2}_0(\Omega)=Y^{1,2}_0(\Omega)$ if $\abs{\Omega}<\infty$; see \cite[\S 1.3.4]{MZ}.
In particular, if $\Omega$ is a bounded domain, then we have $Y^{1,2}_0(\Omega)=W^{1,2}_0(\Omega)$.
\subsection{Lipschitz domain}
We say that $\Omega\subset \bR^3$ is a (bounded) Lipschitz domain  if 
\begin{enumerate}[i)]
\item
$\Omega$ is a bounded domain; i.e.
\[
\diam \Omega:=\sup\set{\abs{x-y}: x, y\in \Omega} <\infty,
\]
\item
There are constants $M$ and $r_0>0$, called Lipschitz character of $\partial\Omega$, such that for each $P\in \partial\Omega$, there exists a rigid transformation of coordinates such that  $P=0$ and
\[
\Omega \cap B_{r_0}=\set{x=(x',x_3)\in \bR^3: x_3> \varphi (x')}\cap B_{r_0};\quad B_{r_0}=B_{r_0}(0),
\]
where $\varphi:\bR^2 \to \bR$ is a Lipschitz function such that $\varphi(0)=0$, with Lipschitz constant less than or equal to $M$; i.e.
\[
\abs{\varphi(x')-\varphi(y')} \leq M \abs{x'-y'},\quad \forall x',y'\in \bR^2.
\]
\end{enumerate}

\subsection{Weak solutions}					\label{sec2.2ws}
We say that $\vec u$ is a weak solution in $Y^{1,2}(\Omega)$ of the system \eqref{eq1.5ee} if
\begin{equation}					\label{eq2.1ws}
\int_\Omega a (\nabla \times \vec u) \cdot (\nabla \times \vec \phi) + b (\nabla \cdot \vec u) (\nabla \cdot \vec \phi) = \int_\Omega \vec f \cdot \vec \phi,\quad \forall \vec \phi \in C^\infty_0(\Omega).
\end{equation}
We say that a function $\vec u$ is a weak solution in $Y^{1,2}_0(\Omega)$ of the problem
\begin{equation}					\label{eq2.3ws}
\left\{
\begin{array}{c}
\nabla\times (a(x)\nabla\times \vec u)-\nabla(b(x)\nabla \cdot \vec u)= \vec f \quad\text{in }\;\Omega,\\
\vec u=0\quad \text{on }\;\partial\Omega,
\end{array}
\right.
\end{equation}
if $\vec u$ belongs to $Y^{1,2}_0(\Omega)$ and satisfies the identity \eqref{eq2.1ws}.
By a weak solution in $Y^{1,2}_0(\Omega)$ of the problem \eqref{eq0.3cc}, we mean a function $\vec u\in Y^{1,2}_0(\Omega)$ satisfying
\begin{align}					\label{eq2.4ws}
\int_\Omega a (\nabla \times \vec u) \cdot (\nabla \times \vec \phi) &= \int_\Omega \vec f \cdot \vec \phi + \vec g \cdot (\nabla \times \vec \phi) ,\quad \forall \vec \phi \in C^\infty_0(\Omega)\\
							\label{eq2.5ws}
\int_\Omega \vec u \cdot \nabla \psi&=-\int_\Omega h \psi,\quad \forall \psi\in C^\infty_0(\Omega).
\end{align}
By using the standard elliptic theory, one can easily prove the existence and uniqueness of a weak solution of the problem \eqref{eq2.3ws} in $Y^{1,2}_0(\Omega)$ provided $\vec f \in L^{6/5}(\Omega)$.
Similarly, if $\vec f\in H_{6/5} (\Omega)$ and $\vec g\in L^2(\Omega)$, one can show that there exists a weak solution in $Y^{1.2}_0(\Omega)$ of the problem \eqref{eq0.3cc}  when $h=0$;  in the more general case when $h \in L^{6/5}(\Omega)$ and $\int_\Omega h = 0$, one can show that there exists a unique weak solution in $Y^{1.2}_0(\Omega)$ of the problem \eqref{eq0.3cc} provided that $\Omega$ is a bounded Lipschitz domain; see Appendix for the proofs.

\section{Main Results}			\label{sec:main}
Our first theorem says that if $\vec f \in L^{q}(\Omega)$ with $q>3/2$, then weak solutions of the system \eqref{eq1.5ee} are locally H\"older continuous in $\Omega$. 

\begin{theorem}			\label{thm3.2a}
Let $\Omega$ be a (possibly unbounded) domain in $\bR^3$.
Assume that $a(x)$ and $b(x)$ are measurable functions on $\Omega$ satisfying the condition \eqref{eq0.2bh}, and that $\vec u\in Y^{1,2}(\Omega)$ is a weak solution of the system \eqref{eq1.5ee}, where $\vec f \in L^q(\Omega)$ with $q>3/2$.
Then $\vec u$ is H\"older continuous in $\Omega$, and for all $B_R=B_R(x_0) \subset\subset \Omega$, we have the following estimate for $\vec u$:
\begin{equation}			\label{eq3.3cc}
R^\alpha [\vec u]_{\alpha; B_{R/2}} + \abs{\vec u}_{0;B_{R/2}} \leq N \left( R^{-3/2} \norm{\vec u}_{L^2(B_R)}+ R^{2-3/q} \norm{\vec f}_{L^q(B_R)}\right),
\end{equation}
where $\alpha=\alpha(\nu, q) \in (0,1)$ and $N=N(\nu,q)>0$.
\end{theorem}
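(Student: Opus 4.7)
The plan is to reduce the system \eqref{eq1.5ee} to the divergence-free curl system of \cite{KK02} via a Hodge-type decomposition of $\vec u$, and then close a Campanato-type iteration. All constants will depend only on $\nu$ and $q$.

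The key algebraic observation is that, upon setting $p := b(\nabla \cdot \vec u)$ and testing \eqref{eq2.1ws} against $\vec\phi = \nabla \psi$ for $\psi \in C^\infty_0(\Omega)$, the curl contribution vanishes (since $\nabla \times \nabla \psi = 0$), yielding
\[
\int_\Omega p\, \Delta \psi = \int_\Omega \vec f \cdot \nabla \psi, \qquad \forall \psi \in C^\infty_0(\Omega),
\]
i.e., $\Delta p = -\nabla \cdot \vec f$ distributionally. Thus $p$ solves a \emph{scalar, constant-coefficient} Poisson equation, so classical interior $L^q$/Morrey estimates together with the assumption $q > 3/2$ yield positive-rate Morrey decay for $\nabla p$ on small balls. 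Now fix $B_R = B_R(x_0) \subset\subset \Omega$ and let $\phi \in W^{1,2}_0(B_R)$ solve $\Delta \phi = \nabla \cdot \vec u$ in $B_R$. Then $\vec v := \vec u - \nabla\phi$ satisfies $\nabla \cdot \vec v = 0$ in $B_R$, and using $\nabla \times \nabla \phi = 0$ together with \eqref{eq1.5ee} one finds
\[
\nabla \times \bigl(a(x)\, \nabla \times \vec v\bigr) = \vec f + \nabla p \quad \text{in } B_R,
\]
so $\vec v$ is a weak solution of the divergence-free curl system of \cite{KK02} with source $\vec f + \nabla p \in L^q(B_R)$.

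Next I would compare $\vec v$, on every ball $B_r \subset B_R$, with the solution $\vec v_0$ of the corresponding homogeneous KK02 problem on $B_r$ with Dirichlet data $\vec v_0 = \vec v$ on $\partial B_r$. The main result of \cite{KK02} provides the Campanato decay
\[
\int_{B_\rho} \bigabs{\vec v_0 - (\vec v_0)_{B_\rho}}^2 \leq C \Bigl(\frac{\rho}{r}\Bigr)^{3+2\alpha_0} \int_{B_r} \bigabs{\vec v_0 - (\vec v_0)_{B_r}}^2, \quad 0 < \rho \leq r,
\]
for some $\alpha_0 = \alpha_0(\nu) \in (0,1)$; meanwhile, the standard energy estimate for the remainder $\vec v - \vec v_0 \in W^{1,2}_0(B_r)$ --- combined with a Gaffney-type bound using $\nabla \cdot (\vec v - \vec v_0) = 0$, H\"older, and the Sobolev embedding --- bounds its $L^2(B_r)$-norm by $C r^{7/2 - 3/q}\,\norm{\vec f + \nabla p}_{L^q(B_r)}$, whose square carries the exponent $7 - 6/q > 3$ in $r$. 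Adding the two contributions and iterating in the familiar Campanato fashion gives H\"older continuity of $\vec v$ on $B_{R/2}$ with some exponent $\alpha = \alpha(\nu, q) \in (0, \alpha_0]$. Parallel scalar estimates for $\phi$ --- exploiting the Morrey decay of $p/b$ from the previous paragraph applied to $\Delta \phi = p/b$ --- render $\nabla\phi$ H\"older continuous on $B_{R/2}$ with the same exponent, and hence so is $\vec u = \nabla\phi + \vec v$. The quantitative form \eqref{eq3.3cc}, including the $L^\infty$ bound, then follows from a Caccioppoli-type inequality (test \eqref{eq2.1ws} against $\vec\phi = \eta^2(\vec u - \vec c)$ for a standard cutoff $\eta$ and the constant $\vec c = R^{-3}\int_{B_R}\vec u$) together with $\abs{\vec u(x)} \leq R^\alpha [\vec u]_{\alpha;B_{R/2}} + \abs{\vec c}$.

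The main obstacle is the delicate regime $3/2 < q \leq 3$, in which $p$ is \emph{not} H\"older continuous: only a Morrey-type decay of $\nabla p$ is available, and one must carefully match Morrey indices so that the perturbation $\vec f + \nabla p$ appearing in the KK02 system for $\vec v$ lies in a class compatible with the decay rate $\alpha_0$ and the iteration produces a strictly positive final H\"older exponent $\alpha(\nu, q)$. Tracking the dependence of all constants on $\nu$ and $q$ alone through both the Hodge decomposition and the Campanato iteration is where the technical weight of the argument lies.
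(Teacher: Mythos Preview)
Your strategy is correct and closely mirrors the paper's, differing mainly in the decomposition used to peel off the divergence. The paper also isolates $\psi := b\,\nabla\cdot\vec u$ as satisfying a scalar constant-coefficient equation (it writes $-\Delta\psi = 0$, tacitly dropping the $\nabla\cdot\vec f$ contribution you correctly retain), but then splits $\vec u = \vec v + \vec w$ on each ball $B_r$ via the \emph{Bogovski\v{\i} operator}: $\vec v \in W^{1,q}_0(B_r)$ solves $\nabla\cdot\vec v = \psi/b - (\psi/b)_{B_r}$, leaving $\vec w = \vec u - \vec v$ with \emph{constant} divergence, and then invokes the inhomogeneous H\"older estimate of \cite{KK02} directly for $\vec w$ (after observing that the KK02 argument needs only $\nabla\cdot\vec w = \text{const}$, not $0$). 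The Bogovski\v{\i} bound $\norm{\nabla\vec v}_{L^q(B_r)} \leq N\norm{\psi/b}_{L^q(B_r)}$ together with Morrey's inequality handles the $\vec v$ piece. Your Hodge-type splitting $\vec u = \nabla\phi + \vec v$ with $\Delta\phi = \nabla\cdot\vec u$ reaches the same destination; it even has the bonus --- which you do not exploit --- that $\int_{B_r}\nabla p \cdot (\vec v - \vec v_0) = 0$ for your divergence-free corrector in $W^{1,2}_0(B_r)$, so only $\vec f$, not $\vec f + \nabla p$, need enter that energy estimate.

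The one step to tighten is the regularity of $\nabla\phi$. The Morrey decay of $\nabla p$ established in your first paragraph does \emph{not} transfer to $p/b$ as an oscillation or Campanato bound, because $b$ is only bounded measurable; dividing by $b$ destroys any H\"older/Campanato information on $p$. What does survive is integrability: from $-\Delta p = \nabla\cdot\vec f$ one gets $\nabla p \in L^q_{\mathrm{loc}}$, hence by Sobolev $p \in L^{q^*}_{\mathrm{loc}}$ with $q^* = 3q/(3-q) > 3$ for $3/2 < q < 3$ (and $p \in L^\infty_{\mathrm{loc}}$ when $q \geq 3$). Then $p/b \in L^{q^*}_{\mathrm{loc}}$, and interior $W^{2,q^*}$ estimates for $\Delta\phi = p/b$ give $\nabla\phi \in C^{0,1-3/q^*}(B_{R/2})$, with the right quantitative dependence on $\norm{\vec u}_{L^2(B_R)}$ and $\norm{\vec f}_{L^q(B_R)}$ via Caccioppoli. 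With that link spelled out, your argument closes throughout the full range $q > 3/2$.
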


In order to establish a global H\"older estimate for weak solutions of the problem \eqref{eq0.3cc}, we need to impose some conditions on $\Omega$.
We shall assume that $\Omega$ is a bounded Lipschitz domain whose first homology group $H_1(\Omega;\bR)$ is trivial; i.e.,
\begin{equation}					\label{eq2.5ef}
H_1(\Omega; \bR)=0.
\end{equation}
For example, if $\Omega$ is simply connected, then it satisfies the above condition.
As mentioned in \S\ref{sec2.2ws}, the existence and uniqueness of a weak solution in $W^{1,2}_0(\Omega)$ of the problem \eqref{eq0.3cc} is established by a standard argument; see Appendix.
\begin{theorem}			\label{thm3.1t}
Let $\Omega \subset \bR^3$ be a bounded Lipschitz domain satisfying the condition \eqref{eq2.5ef}.
Let $a(x)$ be a measurable function on $\Omega$ satisfying the condition \eqref{eq0.2bh} and $\vec u\in W^{1,2}_0(\Omega)$ be the weak solution of the problem \eqref{eq0.3cc}, where $\vec f\in H_{q/2}(\Omega)$, $\vec g, h \in L^q(\Omega)$ for some $q>3$, and $\int_\Omega h=0$.
Then, $\vec u$ is  uniformly H\"older continuous in $\Omega$ and satisfies the following estimate:
\begin{equation}			\label{eq3.2bx}
\norm{\vec u}_{C^{\alpha}(\overline \Omega)}\le N \left(\norm{\vec f}_{L^{q/2}(\Omega)}+\norm{\vec g}_{L^q(\Omega)} +\norm{h}_{L^q(\Omega)}  \right),
\end{equation}
where $\alpha=\alpha(\nu, q, \Omega) \in (0,1)$ and $N=N(\nu,q,\Omega)>0$.
\end{theorem}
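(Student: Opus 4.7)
The plan is to combine an interior Hölder estimate (obtained by adapting Theorem \ref{thm3.2a}) with a boundary Hölder estimate derived via a Caccioppoli inequality and Widman's hole-filling trick, and then to globalize by a covering argument.

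First I would observe that $\vec u$ satisfies, in the distributional sense, a system of the form \eqref{eq1.5ee} with $b\equiv 1$. Indeed, taking $\psi=\nabla\cdot\vec\phi$ in \eqref{eq2.5ws} yields $\int(\nabla\cdot\vec u)(\nabla\cdot\vec\phi) = \int h(\nabla\cdot\vec\phi)$, which combined with \eqref{eq2.4ws} gives, for every $\vec\phi\in C_0^\infty(\Omega)$,
\[
\int_\Omega a\,(\nabla\times\vec u)\cdot(\nabla\times\vec\phi) + (\nabla\cdot\vec u)(\nabla\cdot\vec\phi) = \int_\Omega \vec f\cdot\vec\phi + \vec g\cdot(\nabla\times\vec\phi) + h\,(\nabla\cdot\vec\phi).
\]
Thus $\vec u$ weakly solves $L\vec u = \vec f + \nabla\times\vec g - \nabla h$ with $L\vec u = \nabla\times(a\nabla\times\vec u) - \nabla(\nabla\cdot\vec u)$. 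I would then adapt the proof of Theorem \ref{thm3.2a} to accommodate the divergence-form data $\vec g, h\in L^q(\Omega)$; since $q>3$, the $L^q$-norms of these data enter only via higher-order terms in the scaling, and an interior Hölder estimate analogous to \eqref{eq3.3cc} holds on every $B_R\subset\subset\Omega$.

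For the boundary estimate, fix $x_0\in\partial\Omega$ and a cutoff $\eta$ with $\eta=1$ on $B_{r/2}(x_0)$ and $\supp\eta\subset B_r(x_0)$. Testing the identity above against $\vec\phi=\eta^2\vec u$ --- valid because $\vec u\in W^{1,2}_0(\Omega)$ forces $\eta^2\vec u\in W^{1,2}_0(\Omega)$ --- and combining the pointwise identity $\nabla\cdot\vec u=h$ with the Friedrichs identity $\norm{\nabla\vec v}_{L^2(\bR^3)}^2 = \norm{\nabla\times\vec v}_{L^2}^2 + \norm{\nabla\cdot\vec v}_{L^2}^2$ for $\vec v\in W^{1,2}_0(\bR^3)$, applied to $\vec v=\eta\vec u$, I would derive the boundary Caccioppoli inequality
\[
\int_{B_{r/2}\cap\Omega}\abs{\nabla\vec u}^2 \leq \frac{C}{r^2}\int_{(B_r\setminus B_{r/2})\cap\Omega}\abs{\vec u}^2 + C\,r^{1+\epsilon}\bigl(\norm{\vec f}_{L^{q/2}(\Omega)}^2 + \norm{\vec g}_{L^q(\Omega)}^2 + \norm{h}_{L^q(\Omega)}^2\bigr)
\]
for some $\epsilon=\epsilon(q)>0$. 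Because $\vec u$ vanishes on $\partial\Omega\cap B_r(x_0)$, the Friedrichs-type Poincaré inequality yields $\int_{(B_r\setminus B_{r/2})\cap\Omega}\abs{\vec u}^2 \leq C r^2 \int_{(B_r\setminus B_{r/2})\cap\Omega}\abs{\nabla\vec u}^2$, and Widman's hole-filling trick then produces
\[
\phi(r/2)\leq\theta\,\phi(r) + C\,r^{1+\epsilon}\bigl(\norm{\vec f}_{L^{q/2}}^2 + \norm{\vec g}_{L^q}^2 + \norm{h}_{L^q}^2\bigr), \quad \phi(r):=\int_{B_r(x_0)\cap\Omega}\abs{\nabla\vec u}^2,
\]
for some $\theta=\theta(\nu,\Omega)<1$. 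Iterating yields the Morrey decay $\phi(r)\leq Cr^{1+2\alpha}$ for some $\alpha\in(0,1)$, and the Campanato--Morrey characterization of $C^\alpha$ (together with boundary Poincaré) then gives Hölder continuity of $\vec u$ up to $x_0$.

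Finally, I would patch the interior and boundary estimates by a finite covering of $\overline\Omega$. The $L^\infty$ bound in \eqref{eq3.2bx} follows from the global energy estimate $\norm{\vec u}_{W^{1,2}_0} \leq C(\norm{\vec f}_{L^{q/2}} + \norm{\vec g}_{L^q} + \norm{h}_{L^q})$ (coercivity on $W^{1,2}_0(\Omega)$ relies on the Friedrichs identity, Lipschitz regularity of $\partial\Omega$, and $H_1(\Omega;\bR)=0$, as in the Appendix) combined with a Moser-type iteration. The hardest step will be the boundary Caccioppoli: every cross term arising in $\nabla\times(\eta^2\vec u)$ and $\nabla\cdot(\eta^2\vec u)$ must be controlled using both the vanishing trace and the constraint $\nabla\cdot\vec u=h$, and the $L^q$-data terms must be shown to yield only higher-order $r$-contributions, which uses the assumption $q>3$ essentially.
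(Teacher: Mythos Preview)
Your interior argument is fine (indeed the paper records exactly this extension in the remark following Theorem~\ref{thm3.2a}), but the boundary step has a genuine gap. Widman's hole-filling yields only
\[
\phi(r/2)\le \theta\,\phi(r)+Cr^{1+\epsilon}(\text{data})^2,\qquad \theta=\frac{C}{C+1}<1,
\]
and iteration gives $\phi(r)\lesssim r^{\beta}$ with $\beta=\log_2(1/\theta)=\log_2(1+1/C)$. The Caccioppoli constant $C$ here depends on the ellipticity ratio $\nu^{-2}$ and can be arbitrarily large, so $\beta$ can be arbitrarily small. In dimension three, Morrey--Campanato requires $\phi(r)\lesssim r^{1+2\alpha}$, i.e.\ a decay exponent \emph{strictly greater than $1$}, to conclude H\"older continuity. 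Hole-filling alone cannot deliver this; it is exactly the reason why De Giorgi's counterexample for systems lives in dimension $\ge 3$ while hole-filling suffices in dimension $2$.

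The paper avoids this obstacle by a reduction to a \emph{scalar} equation, and this is where the hypothesis $H_1(\Omega;\bR)=0$ is actually used (not merely for coercivity, as you suggest). One writes $\vec f=\nabla\times\vec F$, observes that $a\,\nabla\times\vec u-\vec F-\vec g$ is curl-free, and --- because the first cohomology vanishes --- obtains a potential $\varphi$ with $\nabla\varphi=a\,\nabla\times\vec u-\vec F-\vec g$. This $\varphi$ solves a scalar divergence-form conormal problem $\nabla\cdot(A\nabla\varphi)=-\nabla\cdot(A\vec F+A\vec g)$ with $A=1/a$, to which boundary De Giorgi--Nash--Moser applies and gives $\varphi\in C^\mu(\overline\Omega)$. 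That is the step that produces the crucial Morrey exponent exceeding $1$: from $[\varphi]_{\mu;\Omega}$ and a Caccioppoli inequality for $\varphi$ one gets $\int_{\Omega_r}\abs{\nabla\varphi}^2\lesssim r^{1+2\gamma}$. Then $\vec u$ satisfies the \emph{constant-coefficient} problem $-\Delta\vec u=\nabla\times\vec G-\nabla h$ with $\vec G=A(\nabla\varphi+\vec F+\vec g)$ already in the right Morrey class, and boundary regularity for harmonic functions in Lipschitz domains (a known estimate with its own exponent $\beta(\Omega)$) finishes the job. The point is that the decay exponent $1+2\gamma>1$ is inherited from scalar De Giorgi--Nash, not manufactured by hole-filling on the system.
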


Related to the above theorems, several remarks are in order.
\begin{remark}					\label{rmk2.5rr}
In Theorem~\ref{thm3.2a}, one may assume that $a(x)$ is not a scalar function but a $3\times 3$ symmetric matrix valued function satisfying
\[
\nu \abs{\vec \xi}^2 \leq \vec \xi^T a(x)\vec \xi \leq \nu^{-1} \abs{\vec \xi}^2,\quad \forall \vec \xi \in \bR^3,\;\;\forall x\in\Omega,\;\; \text{for some}\;\nu\in (0,1].
\]
There is no essential change in the proof; see \cite{KKM}.
As a matter of fact, one may drop the symmetry assumption on $a(x)$ if one assume further that $a\in L^\infty(\Omega)$.
\end{remark}

\begin{remark}
In Theorem~\ref{thm3.2a}, instead of assuming $\vec f \in L^q(\Omega)$, one may assume that $\vec f$ belongs to the Morrey space $L^{p,\lambda}$ with $p=6/5$ and $\lambda=6(1+2\delta)/5$ for some $\delta\in(0,1)$; see the proof of Theorem~\ref{thm4.2b} and Remark~\ref{rmk4.5ff} in Section~\ref{sec:p}.
The ``interior'' Morrey space $L^{p,\lambda}$ is defined to be the set of all functions $f\in L^p(\Omega)$ with finite norm
\[
\norm{u}_{L^{p,\lambda}}=\sup_{B_r(x_0)\subset \Omega} \left(r^{-\lambda}\int_{B_r(x_0)} \abs{u}^p\,\right)^{1/p}.
\]
Moreover, instead of the system \eqref{eq1.5ee}, one may consider the following system:
\[
\nabla\times (a(x)\nabla\times \vec u)-\nabla(b(x)\nabla \cdot \vec u) = \vec f + \nabla \times \vec F + \nabla g \quad \text{in }\;\Omega.
\]
One can show that weak solutions $\vec u$ of the above system are H\"older continuous in $\Omega$ if 
\[
\vec f \in L^{6/5,6(1+2\delta)/5},\;\; \vec F\in L^{2,(1+2\delta)/2},\;\;\text{and }\; g\in L^{2,(1+2\delta)/2}\quad \text{for some }\;\delta \in(0,1).
\]
In particular, if $\vec f \in L^{q/2}(\Omega)$, $\vec F \in L^q(\Omega)$, and $g\in L^q(\Omega)$ for $q>3$, then weak solutions $\vec u\in Y^{1,2}(\Omega)$ of the above system are H\"older continuous in $\Omega$.
Moreover, in that case, we have the estimate
\[
r^\alpha [\vec u]_{\alpha; B_{r/2}} + \abs{\vec u}_{0; B_{r/2}} \leq N \left( r^{-3/2} \norm{\vec u}_{2;B_r}+ r^{2-6/q} \norm{\vec f}_{q/2;B_r}+ r^{1-3/q} \norm{\vec F}_{q; B_r}+r^{1-3/q}  \norm{g}_{q;B_r}\right),
\]
whenever $B_r=B_r(x_0)\subset\subset \Omega$, where $\alpha=\alpha(\nu, q)\in (0,1)$ and $N=N(\nu, q)$.
\end{remark}

\begin{remark}					\label{rmk2.10}
In Theorem~\ref{thm3.1t}, one may wish to consider the following problem with non-zero Dirichlet boundary data, instead of the problem \eqref{eq0.3cc}:
\begin{equation}					\label{eq2.13cc}
\left\{
\begin{array}{c}
\nabla\times (a(x)\nabla\times \vec u)=\vec f +\nabla\times \vec g \quad\text{in }\;\Omega,\\
\nabla \cdot \vec u=h\quad\text{in }\;\Omega,\\
\vec u= \vec \psi \quad \text{on }\;\partial\Omega,
\end{array}
\right.
\end{equation}
where one needs to assume the compatibility condition $\int_\Omega h = \int_{\partial\Omega} \vec \psi\cdot n$ instead of the condition $\int_\Omega h =0$ in Theorem~\ref{thm3.1t}.
If $\vec \psi$ is the trace of a Sobolev function $\vec w \in W^{1,q}(\Omega)$ with $q>3$,  then $\vec v:=\vec u-\vec w$ is a solution of the problem \eqref{eq0.3cc} with $\vec g$ and $h$ replaced respectively by  $\tilde{\vec g}$ and $\tilde h$, where
\[
\tilde{\vec g}:=\vec g - a \nabla\times \vec w,\quad \tilde h:=h-\nabla \cdot \vec w \in L^q(\Omega).
\]
Notice that $\int_\Omega \tilde h =0$.
Therefore, by the estimate \eqref{eq3.2bx} and Morrey's inequality, we have the following estimate  the weak solution $\vec u$ of the problem \eqref{eq2.13cc}:
\[
\norm{\vec u}_{C^{\alpha}(\overline \Omega)} \leq  N \left(\norm{\vec f}_{L^{q/2}(\Omega)}+\norm{\vec g}_{L^q(\Omega)} +\norm{h}_{L^q(\Omega)} + \norm{\vec w}_{W^{1,q}(\Omega)} \right),
\]
where $\alpha=\alpha(\nu, q, \Omega) \in (0,1)$ and $N=N(\nu,q,\Omega)>0$.
Recall that $\Omega\subset \bR^3$ is assumed to be a bounded Lipschitz domain.
It is known that if $\vec \psi$ belongs to the Besov space $B^q_{1-1/q}(\partial\Omega)$, then it can be extended to a function $\vec w$ in the Sobolev space $W^{1,q}(\Omega)$ in such a way that the following estimate holds:
\[
\norm{\vec w}_{W^{1,q}(\Omega)} \leq N \norm{\vec \psi}_{B^q_{1-1/q}(\partial\Omega)},
\]
where $N=N(\Omega,q)$; see e.g., Jerison and Kenig \cite[Theorem~3.1]{JK95}.
Therefore, the following estimate is available for the weak solution $\vec u$ of the problem \eqref{eq2.13cc}:
\[
\norm{\vec u}_{C^{\alpha}(\overline \Omega)}\le N \left(\norm{\vec f}_{L^{q/2}(\Omega)}+\norm{\vec g}_{L^q(\Omega)} +\norm{h}_{L^q(\Omega)} + \norm{\vec \psi}_{B^q_{1-1/q}(\partial\Omega)} \right),
\]
where $\alpha=\alpha(\nu, q, \Omega) \in (0,1)$ and $N=N(\nu,q,\Omega)>0$.
The above estimate provides, in particular, the global bounds for the weak solution $\vec u$ of the problem \eqref{eq2.13cc} in $\Omega$.
It seems to us that Theorem~\ref{thm3.1t} is the first result establishing the global boundedness of weak solutions of the Dirichlet problem \eqref{eq2.13cc} in Lipschitz domains.
\end{remark}

\section{Proofs of  Main theorems}			\label{sec:pf}

\subsection{Proof of Theorem~\ref{thm3.2a}}
We shall make the qualitative assumption that the weak solution $\vec u$ is smooth in $\Omega$.
This can be achieved by assuming coefficient $a(x)$ and the inhomogeneous term $\vec f$ are smooth in $\Omega$ and adopting the standard approximation argument.
It should be clear from the proof that the constant $\alpha$ and $N$ will not depend on these extra smoothness assumption.
By a standard computation (see e.g., \cite[Lemma~4.4]{KK02}), we can derive the following Caccioppoli's inequality for $\vec u$:
\begin{lemma}[Caccioppoli's inequality]			\label{lem4.2tt}
With $\vec u$, $\vec f$, and $R$ as in the theorem, we have
\[
\int_{B_{2r}} \abs{\nabla \times \vec u}^2+ \abs{\nabla \cdot \vec u}^2 \leq N \left(r^{-2} \int_{B_{3r}} \abs{\vec u}^2+ \norm{\vec f }_{L^{6/5}(B_{3r})}^2\right);\quad r=R/3.
\]
\end{lemma}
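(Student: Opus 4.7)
The plan is to use the standard test-function argument. Choose a cutoff $\eta\in C^\infty_0(B_{3r})$ with $\eta\equiv 1$ on $B_{2r}$ and $\abs{\nabla\eta}\leq N/r$, and plug $\vec\phi = \eta^2\vec u$ into the weak formulation \eqref{eq2.1ws}. Expanding
\[
\nabla\times(\eta^2\vec u)=\eta^2\nabla\times\vec u + 2\eta\nabla\eta\times\vec u, \qquad \nabla\cdot(\eta^2\vec u)=\eta^2\nabla\cdot\vec u + 2\eta\nabla\eta\cdot\vec u,
\]
and using the ellipticity \eqref{eq0.2bh} together with Young's inequality to absorb the cross terms yields
\[
\int \eta^2\bigl(\abs{\nabla\times\vec u}^2 + \abs{\nabla\cdot\vec u}^2\bigr) \leq N\int\abs{\nabla\eta}^2\abs{\vec u}^2 + N\Bigabs{\int\vec f\cdot\eta^2\vec u}.
\]

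The main work is estimating the term involving $\vec f$. Since $\eta\vec u$ has compact support in $\bR^3$, H\"older's inequality combined with the Sobolev embedding $W^{1,2}_0\hookrightarrow L^6$ gives
\[
\Bigabs{\int\vec f\cdot\eta^2\vec u} \leq \norm{\vec f}_{L^{6/5}(B_{3r})}\norm{\eta\vec u}_{L^6} \leq N\norm{\vec f}_{L^{6/5}(B_{3r})}\norm{\nabla(\eta\vec u)}_{L^2}.
\]
To turn the gradient back into curl and divergence, I would invoke the vector identity
\[
\int_{\bR^3}\abs{\nabla\vec w}^2 = \int_{\bR^3}\abs{\nabla\times\vec w}^2 + \int_{\bR^3}\abs{\nabla\cdot\vec w}^2 \qquad \text{for all } \vec w\in C^\infty_0(\bR^3),
\]
which follows from integration by parts together with the identity \eqref{eq0.2ab}. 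Applied to $\vec w=\eta\vec u$, expanding the commutators as above gives
\[
\norm{\nabla(\eta\vec u)}_{L^2}^2 \leq N\int\eta^2\bigl(\abs{\nabla\times\vec u}^2+\abs{\nabla\cdot\vec u}^2\bigr) + N\int\abs{\nabla\eta}^2\abs{\vec u}^2.
\]

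One more application of Young's inequality with a small parameter $\delta$ splits the $\vec f$ term into $\delta\norm{\nabla(\eta\vec u)}_{L^2}^2$ plus $\delta^{-1}\norm{\vec f}_{L^{6/5}(B_{3r})}^2$, and after inserting the previous bound the curl and divergence contributions are absorbed into the left-hand side. The resulting estimate
\[
\int\eta^2\bigl(\abs{\nabla\times\vec u}^2+\abs{\nabla\cdot\vec u}^2\bigr) \leq N\int\abs{\nabla\eta}^2\abs{\vec u}^2 + N\norm{\vec f}_{L^{6/5}(B_{3r})}^2,
\]
together with $\eta\equiv 1$ on $B_{2r}$ and $\abs{\nabla\eta}\leq N/r$, yields the claimed inequality. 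The only real subtlety is that $\vec u$ is not itself compactly supported, so the vector identity cannot be applied to $\vec u$ directly; working throughout with $\eta\vec u$ and carefully tracking the commutator terms $2\eta\nabla\eta\times\vec u$ and $2\eta\nabla\eta\cdot\vec u$ on each occurrence is what makes the two absorption steps go through cleanly.
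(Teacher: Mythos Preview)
Your proof is correct and is precisely the ``standard computation'' the paper has in mind (it merely cites \cite[Lemma~4.4]{KK02} rather than writing out details). The cutoff $\eta^2\vec u$ as test function, absorption of the commutator terms via Young's inequality, the use of the Sobolev embedding together with the identity $\int\abs{\nabla\vec w}^2=\int\abs{\nabla\times\vec w}^2+\abs{\nabla\cdot\vec w}^2$ for compactly supported $\vec w$ to handle the $\vec f$-term, and the second absorption step are exactly the standard ingredients.
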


We take the divergence in the system \eqref{eq1.5ee} to get
\[
-\Delta \psi= \nabla \cdot \vec f \quad\text{in }\;\Omega;\quad \psi:=b \nabla \cdot \vec u.
\]
Denote $B(x)= 1/b(x)$ and observe that $\vec u$ satisfies
\begin{equation}					\label{eq4.27bb}
\nabla\cdot \vec u  = B \psi \quad \text{in }\;\Omega.
\end{equation}
Next, we split $\vec u=\vec v+\vec w$ in $B_r=B_r(x_0)$, where $r=R/3$ and $\vec v$ is a solution of the problem
\[
\left\{
\begin{array}{c}
\nabla \cdot \vec v = B \psi- (B \psi)_{x_0,r}  \quad \text{in }\;B_r,\\
\vec v= 0 \quad \text{on }\;\partial B_r,
\end{array}
\right.
\]
where we used the notation \[(B \psi)_{x_0,r}:=\fint_{B_r(x_0)} B \psi.\]
We assume that the function $\vec v$ is chosen so that following estimate, which is originally due to Bogovski\v{i} \cite{Bog}, holds for $\vec v$ (see Galdi \cite[\S III.3]{Galdi}):
\begin{equation}					\label{eq4.30sh}
\norm{\nabla \vec v}_{L^p(B_r)} \leq N \norm{B \psi-(B \psi)_{x_0,r}}_{L^p(B_r)} \leq N \norm{B \psi}_{L^p(B_r)},\;\;\forall p \in (1,\infty);\quad N=N(p).
\end{equation}
We decompose $\psi=\psi_1+\psi_2$ in $B_{2r}$, where $\psi_2$ is the solution of
\[
\left\{
\begin{array}{c}
- \Delta \psi_2 = \nabla \cdot \vec f \quad \text{in }\;B_{2r},\\
\psi_2= 0 \quad \text{on }\;\partial B_{2r}.
\end{array}
\right.
\]
By the Calder\`on-Zygmund theory, we have
\[
\norm{\nabla \psi_2}_{L^q(B_{2r})} \le N \norm{\vec f}_{L^q(B_{2r})}.
\]
We assume without loss of generality that $q<3$.
Then by the Sobolev inequality, we have
\[
\norm{\psi_2}_{L^{q*}(B_{2r})} \le N \norm{\vec f}_{L^q(B_{2r})},\quad q*=\tfrac{3q}{3-q}>3.
\]
Note that $\psi_1$ is harmonic in $B_{2r}$ and thus by the mean value property, we have
\[
\norm{\psi_1}_{L^q(B_r)}\le N r^{3/q-3/2} \norm{\psi_1}_{L^2(B_{2r})}.
\]
Therefore, by using $\psi_1= \psi-\psi_2$ and H\"older's inequality, we obtain
\begin{equation}					\label{eq4.31se}
\norm{\psi}_{L^q(B_r)}\le N r^{3/q-3/2} \norm{\psi}_{L^2(B_{2r})}+ N r \norm{\vec f}_{L^{q}(B_{2r})}.
\end{equation}
Combining the estimates \eqref{eq4.30sh} and \eqref{eq4.31se}, and then using \eqref{eq4.27bb} and Lemma~\ref{lem4.2tt}, we get
\begin{equation}					\label{eq4.29sk}
\norm{\nabla \vec v}_{L^q(B_r )} \leq N r^{3/q-5/2} \norm{\vec u}_{L^2(B_{3r})}+N r \norm{\vec f}_{L^{q}(B_{3r})} ;\quad N=N(\nu,q).
\end{equation}
By Sobolev inequality, \eqref{eq4.30sh}, \eqref{eq4.27bb}, Lemma~\ref{lem4.2tt}, and H\"older's inequality, we also estimate
\begin{equation}					\label{eq4.33rr}
\norm{\vec v}_{L^2(B_r)} \leq N r \norm{\nabla \vec v}_{L^2(B_r)} 
\leq N \left( \norm{\vec u}_{L^2(B_{3r})}+ r^{7/2-3/q}  \norm{\vec f}_{L^{q}(B_{3r})} \right).
\end{equation}

On the other hand, note that $\vec w=\vec u-\vec v$ is a weak solution of the problem
\[
\left\{
\begin{array}{c}
\nabla\times (a(x)\nabla\times \vec w)=  \vec f+ \nabla \psi - \nabla\times (a(x)\nabla\times \vec v) \quad\text{in }\; B_r,\\
\nabla \cdot \vec w= (B \psi)_{x_0,r} \quad\text{in }\;B_r.
\end{array}
\right.
\]
We remark that in the proof of  \cite[Theorem~2.1]{KK02}, we used the condition $\nabla\cdot \vec u=0$ only to establish the following equality (recall the identity \eqref{eq0.2ab} above),
\[\nabla\times (\nabla \times \vec u)=-\Delta \vec u,\]
which can be also obtained by merely assuming that $\nabla \cdot \vec u$ is constant.
Therefore, by \cite[Theorem~2.1 and Remark~2.10]{KK02}, we have (via a standard scaling argument)
\begin{equation}						\label{eq4.35mm}
r^\alpha[\vec w]_{\alpha; B_{r/2}} \leq N \left(r^{-3/2} \norm{\vec w}_{L^2(B_r)} +  r^{2-3/q}\norm{\vec f +\nabla \psi}_{L^{q}(B_r)}+  r^{2-3/q}\norm{\nabla \vec v}_{L^{q*}(B_r)} \right),
\end{equation}
where $\alpha=\alpha(\nu, q)>0$, $N=N(\nu, q)$, and we used $\nabla \cdot (\vec f + \nabla \psi)=0$ and $q*=\frac{3q}{3-q}>3$.

We estimate the RHS of \eqref{eq4.35mm} as follows.
By the estimate \eqref{eq4.33rr}, we have
\begin{equation}
\norm{\vec w}_{L^2(B_r)} \leq \norm{\vec u}_{L^2(B_r)}+\norm{\vec v}_{L^2(B_r)} \leq  N \left( \norm{\vec u}_{L^2(B_{3r})} + r^{7/2-3/q}  \norm{\vec f}_{L^{q}(B_{3r})}\right).
\end{equation}
Recall that $\psi=\psi_1+\psi_2$ and that $\psi_2$ satisfies
\[
\norm{\psi_2}_{L^2(B_{2r})} \le  \norm{\psi_2}_{L^{q*}(B_{2r})}\; \abs{B_{2r}}^{\frac{1}{2}-\frac{1}{q*}} \le N r^{5/2-3/q} \norm{\vec f}_{L^q(B_{2r})}.
\]
Since $\psi_1=\psi-\psi_2$ is harmonic in $B_{2r}$, we also have
\[
\norm{\nabla \psi_1}_{L^{q}(B_r)} \le N r^{3/q-5/2}\norm{\psi-\psi_2}_{L^2(B_{2r})}
 \le N r^{3/q-7/2} \norm{\vec u}_{L^2(B_{3r})} + N \norm{\vec f}_{L^q(B_{3r})}.
\]
where we used \eqref{eq4.27bb}, Lemma~\ref{lem4.2tt}, and H\"older's inequality as well as the previous inequality.
Therefore, we obtain
\begin{equation}						\label{eq4.37ch}
\norm{\nabla \psi}_{L^{q}(B_r)} \le
\norm{\nabla \psi_1}_{L^{q}(B_r)} + \norm{\nabla \psi_2}_{L^{q}(B_r)} \le
N r^{3/q-7/2} \norm{\vec u}_{L^2(B_{3r})} + N \norm{\vec f}_{L^q(B_{3r})}.
\end{equation}
Similar to \eqref{eq4.29sk}, we have
\[
\norm{\nabla \vec v}_{L^{q*}(B_r )} \le N r^{3/q-5/2} \norm{\psi}_{L^2(B_{2r})}+N \norm{\vec f}_{L^{q}(B_{2r})} \le
N r^{3/q-7/2} \norm{\vec u}_{L^2(B_{3r})}+N \norm{\vec f}_{L^{q}(B_{3r})}.
\]
By combining \eqref{eq4.35mm} -- \eqref{eq4.37ch}, and the above inequality, we obtain
\[
r^\alpha [\vec w]_{\alpha; B_{r/2}} \leq N \left( r^{-3/2} \norm{\vec u}_{L^2(B_{3r})} + r^{2-3/q}  \norm{\vec f}_{L^{q}(B_{3r})}\right).
\]
Also, by Morrey's inequality, we have
\[
[\vec v]_{\mu; B_r} \le N \norm{\nabla v}_{L^{q*}(B_r)} \le  N \left( r^{-3/2-\mu} \norm{\vec u}_{L^2(B_{3r})} +  \norm{\vec f}_{L^{q}(B_{3r})}\right);\quad \mu=2-3/q.
\]
By combining the above two estimates (replace $\alpha$ by $\mu$ if necessary), we conclude
\begin{equation}					\label{eq4.40zz}
r^{\alpha} [\vec u]_{\alpha; B_{r/2}} \leq N  \left( r^{-3/2} \norm{\vec u}_{L^2(B_{3r})} + r^{2-3/q}  \norm{\vec f}_{L^{q}(B_{3r})}\right).
\end{equation}
From the above estimate \eqref{eq4.40zz}, we can estimate $\abs{\vec u}_{0;B_{r/4}}$ as follows.
For all $y\in B_{r/4}$, the triangle inequality yields
\[
\abs{\vec u(y)}\leq \abs{\vec u(x)}+ [\vec u]_{\alpha; B_{r/2}} (r/2)^\alpha, \quad \forall x\in B_{r/4}.
\]
Taking the average over $B_{r/4}$ in $x$, and then using H\"older's inequality and \eqref{eq4.40zz}, we get
\[
\abs{\vec u(y)} \leq \left(\fint_{B_{r/4}} \abs{\vec u}^2\right)^{1/2}+N \left( r^{-3/2} \norm{\vec u}_{L^2(B_{3r})} + r^{2-3/q}  \norm{\vec f}_{L^{q}(B_{3r})}\right).
\]
Since the above estimate is uniform in $y\in B_{r/4}$, we thus have
\begin{equation}					\label{eq4.41yx}
\abs{\vec u}_{0; B_{r/4}} \leq N \left( r^{-3/2} \norm{\vec u}_{L^2(B_{3r})} + r^{2-3/q}  \norm{\vec f}_{L^{q}(B_{3r})}\right).
\end{equation}
Recall that $r=R/3$. 
Therefore, the desired estimate \eqref{eq3.3cc} follows from \eqref{eq4.40zz} and \eqref{eq4.41yx} and the standard covering argument.
The theorem is proved.
\hfill\qedsymbol

\subsection{Proof of Theorem~\ref{thm3.1t}}
We shall again make the qualitative assumption that  the coefficient $a(x)$, the inhomogeneous terms $\vec f$, $\vec g$, $h$, and the domain $\Omega$ are smooth.
By a standard elliptic regularity theory, we may then assume that $\vec u$ is also smooth in $\overline\Omega$.
In this proof, we denote by $N$ a constant that depends only on $\nu$, $q$, and $\Omega$, unless explicitly otherwise stated.
It should be emphasized that those constants $N$ employed in various estimates below, do not inherit any information from the extra smoothness assumption imposed on $\Omega$; its dependence on $\Omega$ will be only that on the Lipschitz character $M, r_0$ of $\partial\Omega$ and $\diam\Omega$.
Let us recall the following lemma, the proof of which can be found in \cite{KK05}.

\begin{lemma}				\label{lem:1}
Let $\vec f \in \cD(\Omega)$, where $\Omega$ is a domain in $\bR^3$.
Then, there exists $\vec F\in C^\infty(\Omega)$ such that $\nabla\times \vec F=\vec f$ in $\Omega$.
Moreover, for any $p\in (1,\infty)$, we have
\[
\norm{\nabla \vec F}_{L^p(\Omega)}\le N \norm{\vec f}_{L^p(\Omega)};\quad N=N(p).
\]
\end{lemma}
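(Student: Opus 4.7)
The plan is to reduce the construction of $\vec F$ to the Newtonian potential on all of $\bR^3$ and then invoke the Calder\'on--Zygmund estimate for the $L^p$ bound. The key point is that $\vec f\in\cD(\Omega)$ means $\vec f$ is smooth, divergence-free, and compactly supported in $\Omega$, so extending by zero produces a function in $C^\infty_0(\bR^3)$ that is still divergence-free on all of $\bR^3$. I will keep calling this extension $\vec f$.

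Next, I would define the Newtonian potential
\[
\vec u(x):=\frac{1}{4\pi}\int_{\bR^3}\frac{\vec f(y)}{\abs{x-y}}\,dy,
\]
which lies in $C^\infty(\bR^3)$ and satisfies $-\Delta\vec u=\vec f$. The crucial observation is that $\vec u$ is itself divergence-free: differentiating under the integral, using $\nabla_x\tfrac{1}{\abs{x-y}}=-\nabla_y\tfrac{1}{\abs{x-y}}$, and integrating by parts (legitimate because $\vec f$ has compact support in $\bR^3$),
\[
\nabla\cdot\vec u(x)=-\frac{1}{4\pi}\int_{\bR^3}\vec f(y)\cdot\nabla_y\frac{1}{\abs{x-y}}\,dy=\frac{1}{4\pi}\int_{\bR^3}\frac{\nabla_y\cdot\vec f(y)}{\abs{x-y}}\,dy=0.
\]
Combined with the vector identity \eqref{eq0.2ab}, this gives $\vec f=-\Delta\vec u=\nabla\times(\nabla\times\vec u)$, so $\vec F:=\nabla\times\vec u$ satisfies $\nabla\times\vec F=\vec f$ on $\bR^3$. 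Restricting to $\Omega$ yields the desired $\vec F\in C^\infty(\Omega)$.

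For the $L^p$-estimate, note that $\nabla\vec F=\nabla(\nabla\times\vec u)$ is a linear combination of second partial derivatives of the components of $\vec u$, i.e.\ of second derivatives of the Newtonian potential applied to components of $\vec f$. By the classical Calder\'on--Zygmund theorem (equivalently, $L^p$-boundedness of the double Riesz transforms $R_iR_j$ for $1<p<\infty$),
\[
\norm{\nabla\vec F}_{L^p(\Omega)}\le\norm{\nabla^2\vec u}_{L^p(\bR^3)}\le N(p)\norm{\vec f}_{L^p(\bR^3)}=N(p)\norm{\vec f}_{L^p(\Omega)},
\]
where the last equality uses that the extension of $\vec f$ vanishes off $\Omega$.

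The only real subtlety is the justification of the integration by parts and differentiation under the integral sign used to conclude $\nabla\cdot\vec u\equiv 0$; this is straightforward because $\vec f$ is smooth and compactly supported, but it is exactly the place where the hypothesis $\vec f\in\cD(\Omega)$ (rather than merely $\vec f\in L^p$ with $\nabla\cdot\vec f=0$ in the distributional sense) is used. Everything else is routine once one invokes Calder\'on--Zygmund, which is the mechanism that restricts $p$ to $(1,\infty)$.
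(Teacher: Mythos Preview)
Your argument is correct: extending $\vec f$ by zero to $\bR^3$, taking the Newtonian potential $\vec u$, observing $\nabla\cdot\vec u=0$, and setting $\vec F=\nabla\times\vec u$ is precisely the Biot--Savart construction, and the $L^p$ bound on $\nabla\vec F$ is exactly the Calder\'on--Zygmund estimate for second derivatives of the Newtonian potential. The paper does not give its own proof here but simply cites \cite{KK05}; your construction is the standard one and is essentially what that reference contains.
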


By using the above lemma, we may write $\vec f=\nabla \times \vec F$, where $\vec F\in C^\infty(\Omega)$ satisfies the following estimate
\begin{equation}				\label{eq4.1aa}
\norm{\nabla \vec F}_{L^{q/2}(\Omega)}\le N \norm{\vec f}_{L^{q/2}(\Omega)};\quad N=N(q).
\end{equation}
Notice that $\vec u$ then satisfies
\begin{equation}					\label{eq4.3cc}
\nabla\times(a(x) \nabla\times \vec u -\vec F -\vec g) =0 \quad \text{in }\;\Omega.
\end{equation}
Let $\varphi$ be a solution of the Neumann problem
\begin{equation}					\label{eq4.4dd}
\left\{
\begin{array}{c}
\Delta \varphi = \nabla\cdot (a(x) \nabla\times \vec u -\vec F -\vec g) \quad \text{in }\;\Omega,\\
\partial \varphi/\partial n=-(\vec F+\vec g)\cdot \vec n \quad \text{on }\;\partial\Omega,
\end{array}
\right.
\end{equation}
where $\vec n$ denotes the outward unit normal vector of $\partial\Omega$.
Recall that $\varphi$ is unique up to an additive constant.
We shall hereafter fix $\varphi$ by assuming $\fint_\Omega \varphi=0$.

\begin{lemma}					\label{lem:1-1}
With $\vec u$ and $\varphi$ given as above, we have
\begin{equation}					\label{eq4.5ee}
\nabla \varphi =  a(x) \nabla\times \vec u -\vec F -\vec g \quad \text{in }\;\Omega.
\end{equation}
\end{lemma}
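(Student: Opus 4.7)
The plan is to verify the identity by showing that the vector field
\[
\vec w := a(x)\nabla\times \vec u - \vec F - \vec g - \nabla\varphi
\]
vanishes identically in $\Omega$. The strategy is to check that $\vec w$ is both curl-free and divergence-free, that its normal component vanishes on $\partial\Omega$, and then to use the topological assumption \eqref{eq2.5ef} to conclude.

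First I would note that $\nabla\cdot \vec w = 0$ in $\Omega$ is immediate from the PDE for $\varphi$ in \eqref{eq4.4dd}, and $\nabla\times \vec w = 0$ follows from \eqref{eq4.3cc} together with the identity $\nabla\times\nabla\varphi \equiv 0$. Next I would compute the normal trace of $\vec w$ on $\partial\Omega$: using the Neumann boundary condition in \eqref{eq4.4dd}, one has
\[
\vec w\cdot \vec n = a(x)(\nabla\times \vec u)\cdot \vec n - (\vec F + \vec g)\cdot \vec n - \partial\varphi/\partial n = a(x)(\nabla\times \vec u)\cdot \vec n \quad \text{on }\partial\Omega.
\]
Since $\vec u = 0$ on $\partial\Omega$, all tangential derivatives of $\vec u$ vanish on $\partial\Omega$, so the restriction of $\nabla \vec u$ to $\partial\Omega$ has the form $\vec n \otimes (\partial \vec u/\partial n)$; consequently $\nabla\times \vec u = \vec n \times (\partial \vec u/\partial n)$ on $\partial\Omega$, which is tangential, so $(\nabla\times \vec u)\cdot \vec n = 0$ on $\partial\Omega$. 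Hence $\vec w\cdot \vec n = 0$ on $\partial\Omega$.

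Now I would invoke the assumption $H_1(\Omega;\bR)=0$. Because $\vec w$ is a smooth curl-free vector field on $\Omega$ (under the qualitative smoothness assumption made at the start of the proof of Theorem~\ref{thm3.1t}), the triviality of the first de Rham cohomology yields a scalar $\psi \in C^\infty(\Omega)$ with $\vec w = \nabla \psi$ in $\Omega$. Combining this with $\nabla\cdot \vec w = 0$ and $\vec w\cdot \vec n = 0$ on $\partial\Omega$, I would find that $\psi$ solves the homogeneous Neumann problem $\Delta\psi = 0$ in $\Omega$, $\partial\psi/\partial n = 0$ on $\partial\Omega$. Multiplying by $\psi$ and integrating by parts gives $\int_\Omega \abs{\nabla\psi}^2 = 0$, so $\psi$ is constant and $\vec w = \nabla\psi = 0$, which is exactly \eqref{eq4.5ee}.

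The main obstacle to be careful about is the boundary computation: the identity $(\nabla\times \vec u)\cdot \vec n = 0$ on $\partial\Omega$ relies essentially on the full Dirichlet condition $\vec u = 0$, not merely on $\vec u\cdot \vec n = 0$. Under the running qualitative smoothness assumption this is straightforward; the only other subtle point is the appeal to the topological hypothesis \eqref{eq2.5ef}, which is precisely why Theorem~\ref{thm3.1t} requires the first homology of $\Omega$ to be trivial (simply-connectedness being the standard sufficient condition).
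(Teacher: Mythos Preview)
Your proof is correct and follows essentially the same route as the paper's: define the difference field, observe it is curl-free, invoke the hypothesis $H_1(\Omega;\bR)=0$ to write it as $\nabla\psi$, and then use the divergence equation and the Neumann condition (together with $(\nabla\times\vec u)\cdot\vec n=0$ on $\partial\Omega$) to conclude $\psi$ is constant. The only cosmetic difference is that the paper verifies $(\nabla\times\vec u)\cdot\vec n=0$ via Stokes' theorem on arbitrary surface patches $\cS\subset\partial\Omega$, whereas you do it by the equivalent pointwise computation $\nabla\times\vec u=\vec n\times(\partial\vec u/\partial n)$; both are valid under the qualitative smoothness assumption.
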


\begin{proof}
First we claim that the boundary condition $\vec u=0$ on $\partial\Omega$ implies that
\begin{equation}				\label{eq4.6cj}
(\nabla \times \vec u) \cdot \vec n =0\quad \text{on }\;\partial\Omega.
\end{equation}
To see this, take any surface $\cS \subset \partial\Omega$ with a smooth boundary $\partial \cS \subset \partial\Omega$.
By Stokes' theorem, we then have
\[
\iint_\cS (\nabla \times \vec u)\cdot \vec n\, dS = \int_{\partial \cS} \vec u \cdot d\vec r=0.
\]
Since $\cS$ is arbitrary and $(\nabla \times \vec u)\cdot \vec n$ is continuous, we have $(\nabla \times \vec u) \cdot \vec n =0$ on $\partial\Omega$ as claimed.
Next, we set 
\[
\vec G=\nabla \varphi -  a(x) \nabla\times \vec u +\vec F +\vec g.
\]
The lemma will follow if we prove that $\vec G \equiv 0$ in $\Omega$.
By \eqref{eq4.3cc} we have $\nabla\times \vec G =0$ in $\Omega$, and thus by the condition \eqref{eq2.5ef}, there exists a potential $\psi$ such that $\vec G=\nabla \psi$ in $\Omega$.
Then by \eqref{eq4.4dd} and \eqref{eq4.6cj}, we find that $\psi$ satisfies $\Delta \psi=0$ in $\Omega$ and $\partial\psi/\partial n =0$ on $\partial\Omega$.
Therefore, we must have $\vec G= \nabla \psi =0$ in $\Omega$.
The lemma is proved.
\end{proof}

Hereafter, we shall denote $A(x)= 1/a(x)$.
It follows from \eqref{eq0.2bh} that
\[
\nu \leq A(x) \leq \nu^{-1},\quad\forall x\in\Omega.
\]
Observe that from \eqref{eq4.5ee} we have 
\[
0=\nabla\cdot (\nabla \times \vec u) = \nabla\cdot \bigl[A(x) \bigr(\nabla \varphi +\vec F +\vec g\bigr)\bigr],
\]
and thus by Lemma~\ref{lem:1-1} we find that $\varphi$ satisfies the following conormal problem:
\begin{equation}					\label{eq4.7aa}
\left\{
\begin{array}{c}
\dv (A(x) \nabla \varphi) = -\dv \left(A \vec F + A \vec g \right) \quad \text{in }\;\Omega,\\
(A(x)\nabla \varphi) \cdot \vec n=-(A\vec F+A\vec g)\cdot \vec n \quad \text{on }\;\partial\Omega.
\end{array}
\right.
\end{equation}
In the variational formulation, \eqref{eq4.7aa} means that we have the identity
\begin{equation}					\label{eq4.10bv}
\int_\Omega A\nabla \varphi \cdot \nabla \zeta = -\int_\Omega (A \vec F + A \vec g)\cdot \nabla \zeta,\quad \forall \zeta\in W^{1,2}(\Omega).
\end{equation}
In particular, by using $\varphi$ itself as a test function, we get
\[
\norm{\nabla \varphi}_{L^2(\Omega)} \leq N \left( \norm{\vec F}_{L^2(\Omega)}+ \norm{\vec g}_{L^2(\Omega)}\right);\quad N=N(\nu).
\]
By Poincar\'e's inequality (recall $\fint_\Omega \varphi=0$) and H\"older's inequality, we then have 
\[
\norm{\varphi}_{W^{1,2}(\Omega)} \leq N \left( \norm{\vec F}_{L^q(\Omega)}+ \norm{\vec g}_{L^q(\Omega)}\right).
\]
Moreover, one can obtain the following estimate by utilizing \eqref{eq4.10bv} and adjusting, for example, the proof of \cite[Theorem~8.29]{GT} (see \cite{LU} and also \cite[\S VI.10]{Lieberman}):
\begin{equation}					\label{eq4.11ew}
[\varphi]_{\mu; \Omega}\leq N\left( \norm{\vec F}_{L^{q}(\Omega)}+\norm{\vec g}_{L^q(\Omega)}\right);\quad \mu=\mu(\nu,q,\Omega)\in (0,1).
\end{equation}

Then, by Campanato's integral characterization of H\"older continuous functions (see e.g., \cite[Theorem~1.2, p. 70]{Gi83}), we derive from \eqref{eq4.11ew} that
\begin{equation}					\label{eq4.16kt}
\int_{\Omega_r(x_0)} \Abs{\varphi-\varphi_{x_0,r}}^2 \leq N r^{3+2\mu} \left(\norm{\vec F}_{L^{q}(\Omega)}+\norm{\vec g}_{L^q(\Omega)}\right)^2; \quad \varphi_{x_0,r}:= \fint_{\Omega_r(x_0)} \varphi.
\end{equation}
From the identity \eqref{eq4.10bv}, we also obtain the following Caccioppoli's inequality:
\begin{equation}					\label{eq4.17hh}
\int_{\Omega_{r/2}(x_0)} \Abs{\nabla \varphi}^2 \leq  N r^{-2}\int_{\Omega_r(x_0)} \Abs{\varphi-\varphi_{x_0,r}}^2+ N r^{3-6/q} \left(\norm{\vec F}_{L^q(\Omega)}^2 + \norm{\vec g}_{L^q(\Omega)}^2\right).
\end{equation}
Setting $\gamma=\min(\mu, 1-3/q)$, and combining \eqref{eq4.16kt} and \eqref{eq4.17hh}, we get the following Morrey-Campanato type estimate for $\nabla \varphi$:
\begin{equation}					\label{eq4.9rr}
\int_{\Omega_r(x_0)} \abs{\nabla \varphi}^2 \leq N r^{1+2\gamma} \left( \norm{\vec F}_{L^{q}(\Omega)}+\norm{\vec g}_{L^q(\Omega)}\right)^2, \quad \forall x_0\in\Omega,\;\; \forall r \in (0, \diam\Omega).
\end{equation}

Having the estimate \eqref{eq4.9rr}  together with the boundary condition $\vec u=0$ on $\partial\Omega$, which is assumed to be locally Lipschitz, we now derive a global H\"older estimate for $\vec u$ as follows.
Since $\nabla\cdot \vec u=h$, by \eqref{eq0.2ab} and \eqref{eq4.5ee} we see that $\vec u$ satisfies
\[
-\Delta \vec u=\nabla\times(A \nabla\varphi)+ \nabla\times(A \vec F + A \vec g)-\nabla h\quad\text{in }\;\Omega.
\]
By H\"older's inequality, we find that (recall $\gamma \leq 1-3/q$)
\[
\int_{\Omega_r(x_0)} \abs{\vec F+ \vec g}^2 \leq  N r^{1+2\gamma}  \left( \norm{\vec F}_{L^{q}(\Omega)}+\norm{\vec g}_{L^q(\Omega)}\right)^2,\quad \forall x_0\in\Omega,\;\; \forall r \in (0, \diam\Omega),
\]
where we used the assumption that $\diam \Omega<\infty$.
Similarly, H\"older's inequality yields
\[
\int_{\Omega_r(x_0)} \abs{h}^2 \leq  N r^{1+2\gamma}\norm{h}_{L^{q}(\Omega)}^2,\quad \forall x_0\in\Omega,\;\; \forall r \in (0, \diam\Omega).
\]
Setting $\vec G:=A(\nabla \varphi+ \vec F +\vec g)$, we find that $\vec u$ satisfies
\begin{equation}					\label{eq4.17de}
\left\{
\begin{array}{c}
-\Delta \vec u = \nabla\times \vec G -\nabla h \quad \text{in }\;\Omega,\\
\vec u=0 \quad \text{on }\;\partial\Omega,
\end{array}
\right.
\end{equation}
where $\vec G$ and $h$ satisfies the following estimate for all $x_0\in\Omega$ and $0< r <\diam\Omega$:
\begin{equation}					\label{eq4.18kk}
\int_{\Omega_r(x_0)} \abs{\vec G}^2 +\abs{h}^2 \leq N r^{1+2\gamma} \left( \norm{\vec F}_{L^{q}(\Omega)}+\norm{\vec g}_{L^q(\Omega)}+\norm{h}_{L^q(\Omega)}\right)^2.
\end{equation}

Observe that the identity \eqref{eq4.5ee} implies that $\nabla \times \vec u$ enjoys the  Morrey-Campanato type estimate \eqref{eq4.9rr}.
The following lemma asserts that in fact, the ``full gradient'' $\nabla \vec u$ satisfies a similar estimate.
\begin{lemma}
With $\vec u$ given as above, there exists $\alpha=\alpha(\nu,q,\Omega)\in(0,1)$ such that
for all $x_0\in\Omega$ and $0<r < \diam\Omega$, we have
\begin{equation}					\label{eq4.20ys}
\int_{\Omega_r(x_0)} \abs{\nabla \vec u}^2 \leq N r^{1+2\alpha} \left( \norm{\vec F}_{L^{q}(\Omega)}+\norm{\vec g}_{L^q(\Omega)}+\norm{h}_{L^q(\Omega)}\right)^2.
\end{equation}
\end{lemma}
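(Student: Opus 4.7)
The plan is to establish \eqref{eq4.20ys} by a Campanato-type comparison argument, freezing \eqref{eq4.17de} on each ball against a harmonic replacement and closing with the standard iteration lemma (see \cite[Lemma~2.1, Ch.~III]{Gi83}). Put $K := \norm{\vec F}_{L^q(\Omega)}+\norm{\vec g}_{L^q(\Omega)}+\norm{h}_{L^q(\Omega)}$ and $\phi(x_0,\rho):= \int_{\Omega_\rho(x_0)}|\nabla \vec u|^2$. Two cases arise, depending on whether the ball is interior or meets $\partial\Omega$.

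For an interior ball with $B_{2r}(x_0)\subset\Omega$, I would split $\vec u=\vec v+\vec w$ on $B_r(x_0)$, where $\vec v$ is the componentwise harmonic extension of $\vec u$ on $\partial B_r(x_0)$, and $\vec w\in W^{1,2}_0(B_r(x_0))$ solves $-\Delta\vec w =\nabla\times \vec G-\nabla h$. Testing the equation for $\vec w$ with $\vec w$ itself, integrating by parts, and using \eqref{eq4.18kk} gives
\[
\int_{B_r(x_0)}|\nabla\vec w|^2 \leq N\int_{B_r(x_0)}\bigl(|\vec G|^2+|h|^2\bigr) \leq N\,r^{1+2\gamma}K^2.
\]
Harmonicity of each component of $\vec v$ together with Dirichlet minimality yields $\int_{B_\rho(x_0)}|\nabla\vec v|^2 \leq N(\rho/r)^3\phi(x_0,r)$, hence
\[
\phi(x_0,\rho)\leq N(\rho/r)^3\phi(x_0,r)+N\,r^{1+2\gamma}K^2, \qquad 0<\rho\leq r.
\]
Since $1+2\gamma<3$, the iteration lemma delivers $\phi(x_0,\rho)\leq N\rho^{1+2\alpha}(\phi(x_0,r)/r^{1+2\alpha}+K^2)$ for any $\alpha<\gamma$.

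For a boundary ball, i.e., when $B_{2r}(x_0)\not\subset\Omega$, I pick $x_1\in\partial\Omega$ with $\abs{x_0-x_1}<2r$ so that $\Omega_r(x_0)\subset\Omega_{3r}(x_1)$, and repeat the decomposition on $\Omega_{3r}(x_1)$: let $\vec v$ solve $-\Delta\vec v=0$ in $\Omega_{3r}(x_1)$ with $\vec v=\vec u$ on $\Omega\cap\partial B_{3r}(x_1)$ and $\vec v=0$ on $(\partial\Omega)_{3r}(x_1)$, and set $\vec w=\vec u-\vec v\in W^{1,2}_0(\Omega_{3r}(x_1))$. The energy estimate for $\vec w$ is identical to the interior case, but the decay for $\vec v$ now requires the classical boundary Hölder regularity of harmonic functions on Lipschitz domains with vanishing Dirichlet data, which delivers
\[
\int_{\Omega_\rho(x_1)}|\nabla\vec v|^2 \leq N(\rho/r)^{1+2\delta}\int_{\Omega_{3r}(x_1)}|\nabla\vec v|^2,
\]
for some $\delta=\delta(M)>0$ depending only on the Lipschitz character of $\partial\Omega$. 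Feeding this back into the same iteration argument yields \eqref{eq4.20ys} with $\alpha<\min(\gamma,\delta)$.

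The main obstacle is securing this boundary decay with exponent strictly greater than $1$: whereas the interior case exploits harmonicity of $\nabla\vec v$ to give the effortless exponent $3$, the boundary decay is sensitive to the geometry of $\partial\Omega$ and produces only a modest $1+2\delta$, which is precisely where the Lipschitz hypothesis on $\Omega$ is consumed. Once the Morrey–Campanato bound \eqref{eq4.20ys} is in place, the passage to the uniform Hölder estimate \eqref{eq3.2bx} via the Campanato characterization and the Dirichlet condition $\vec u=0$ on $\partial\Omega$ is routine.
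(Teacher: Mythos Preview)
Your proof is correct and follows essentially the same route as the paper: harmonic replacement $\vec v$ on $\Omega_r(x_0)$, energy control on $\vec w$ via \eqref{eq4.18kk}, boundary decay for $\nabla\vec v$ from Lipschitz regularity of harmonic functions, and the iteration lemma of \cite[Lemma~2.1, p.~86]{Gi83}. The only cosmetic difference is that the paper does not split into interior and boundary cases but runs the argument directly on $\Omega_r(x_0)$ for all $x_0\in\Omega$ with the uniform exponent $1+2\beta$ (which is weaker than your interior exponent $3$ but suffices), and then closes by taking $R=\diam\Omega$ together with the global energy bound \eqref{eq4.23nx}---a final step you should make explicit to pass from the iterated inequality to \eqref{eq4.20ys}.
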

\begin{proof}
We decompose $\vec u=\vec v+\vec w$ in $\Omega_r(x_0)$, where $\vec v$ is the solution of
\[
\left\{
\begin{array}{c}
-\Delta \vec v = 0 \quad \text{in }\;\Omega_r(x_0),\\
\vec v=\vec u \quad \text{on }\;\partial\Omega_r(x_0).
\end{array}
\right.
\]
Notice that each $v^i$ ($i=1,2,3$) is a harmonic function vanishing on $(\partial\Omega)_r(x_0)\subset \partial\Omega$.
By a well-known boundary H\"older regularity theory for harmonic functions in Lipschitz domains, there exists $\beta=\beta(\Omega)\in (0,1)$ and $N=N(\Omega)$ such that
\begin{equation}					\label{eq4.21np}
\int_{\Omega_\rho(x_0)} \abs{\nabla \vec v}^2 \leq  N\left(\frac{\rho}{r}\right)^{1+2\beta} \int_{\Omega_r(x_0)} \abs{\nabla \vec v}^2,\quad \forall \rho \in (0,r].
\end{equation}
On the other hand, observe that $\vec w=\vec u-\vec v$ is a weak solution of the problem
\[
\left\{
\begin{array}{c}
-\Delta \vec w = \nabla\times\vec G - \nabla h \quad \text{in }\;\Omega_r(x_0),\\
\vec w= 0 \quad \text{on }\;\partial\Omega_r(x_0).
\end{array}
\right.
\]
By using $\vec w$ itself as a test function in the above equations and utilizing \eqref{eq4.18kk}, we derive
\begin{equation}				\label{eq4.22xm}
\int_{\Omega_r(x_0)} \abs{\nabla \vec w}^2 \leq N \int_{\Omega_r(x_0)} \abs{\vec G}^2+\abs{h}^2 \leq N r^{1+2\gamma} \left( \norm{\vec F}_{L^{q}(\Omega)}+\norm{\vec g}_{L^q(\Omega)}+\norm{h}_{L^q(\Omega)}\right)^2.
\end{equation}
By combining \eqref{eq4.21np} and \eqref{eq4.22xm}, we get for any $\rho\leq r$,
\[
\int_{\Omega_\rho(x_0)} \abs{\nabla \vec u}^2 \leq N\left(\frac{\rho}{r}\right)^{1+2\beta} \int_{\Omega_r(x_0)} \abs{\nabla \vec u}^2 + N r^{1+2\gamma} \left( \norm{\vec F}_{L^{q}(\Omega)}+\norm{\vec g}_{L^q(\Omega)}+\norm{h}_{L^q(\Omega)}\right)^2.
\]
Take any $\alpha>0$ such that $\alpha < \min(\beta,\gamma)$ and applying a well-known iteration argument (see e.g. \cite[Lemma~2.1, p. 86]{Gi83}), for all $x_0\in\Omega$ and $0<r <R \leq \diam\Omega$, we have
\[
\int_{\Omega_r(x_0)} \abs{\nabla \vec u}^2 \leq N\left(\frac{r}{R}\right)^{1+2\alpha} \int_{\Omega_R(x_0)} \abs{\nabla \vec u}^2 + N r^{1+2\alpha} \left( \norm{\vec F}_{L^{q}(\Omega)}+\norm{\vec g}_{L^q(\Omega)}+\norm{h}_{L^q(\Omega)}\right)^2.
\]
The lemma follows from the above estimate (take $R=\diam\Omega$) and the estimate
\begin{equation}				\label{eq4.23nx}
\int_{\Omega} \abs{\nabla \vec u}^2 \leq N \int_{\Omega} \abs{\vec G}^2 + \abs{\nabla h}^2 \leq N\left( \norm{\vec F}_{L^{q}(\Omega)}+\norm{\vec g}_{L^q(\Omega)}+\norm{h}_{L^q(\Omega)}\right)^2,
\end{equation}
which is obtained by using $\vec u$ itself as a test function in \eqref{eq4.17de} and then applying \eqref{eq4.18kk} with $r=\diam \Omega$.
The lemma is proved.
\end{proof}

We now estimate $[\vec u]_{\alpha;\Omega}$ as follows.
Denote by $\tilde{\vec u}$ the extension of $\vec u$ by zero on $\bR^3\setminus \Omega$.
Notice that $\tilde{\vec u} \in W^{1,2}(\bR^3)$ and $\nabla \tilde{\vec u}=\chi_{\Omega} \nabla \vec u$.
Then by Poincar\'e's inequality and \eqref{eq4.20ys}, we find that for all $x\in\Omega$ and $0<r<\diam\Omega$, we have
\[
\int_{B_r(x)} \Abs{\tilde{\vec u}-\tilde{\vec u}_{x,r}}^2\leq  N r^{3+2\alpha}\left( \norm{\vec F}_{L^{q}(\Omega)}+\norm{\vec g}_{L^q(\Omega)}+\norm{h}_{L^q(\Omega)}\right)^2.
\]
By a standard argument in the boundary regularity theory, it is readily seen that the above estimate is valid for all $x\in B_R(x_0)$ and $r<2R$, where $x_0\in\Omega$ and $R=\diam\Omega$.
 Therefore, by the Campanato's integral characterization of H\"older continuous functions, we find that $\tilde{\vec u}$ is uniformly H\"older continuous in $B_R(x_0)\supset \overline \Omega$ with the estimate
 \begin{equation}				\label{eq4.22vm}
 [\tilde{\vec u}]_{\alpha; B_R(x_0)} \leq N \left(\norm{\vec F}_{L^{q}(\Omega)}+\norm{\vec g}_{L^q(\Omega)}+\norm{h}_{L^q(\Omega)}\right).
 \end{equation}
 The above estimate \eqref{eq4.22vm} clearly implies that
 \begin{equation}					\label{eq4.23qt}
[\vec u]_{\alpha; \Omega}\le N\left( \norm{\vec F}_{L^{q}(\Omega)}+\norm{\vec g}_{L^q(\Omega)}+\norm{h}_{L^q(\Omega)}\right).
\end{equation}
Finally, we estimate of $\abs{\vec u}_{0; \Omega}$ similar to \eqref{eq4.41yx}.
For $x_0\in \Omega$, the triangle inequality yields
\[
\abs{\vec u(x_0)}\leq \abs{\vec u(x)}+ [\tilde{\vec u}]_{C^{0,\alpha}(\overline B_R(x_0))} R^\alpha, \quad \forall x\in\Omega;\quad R=\diam\Omega.
\]
Taking the average over $\Omega$ in $x$, and then using H\"older's inequality and \eqref{eq4.22vm}, we have
\[
\abs{\vec u(x_0)} \leq \left(\fint_\Omega \abs{\vec u}^2\right)^{1/2}+N(\diam\Omega)^\alpha \left(\norm{\vec F}_{L^{q}(\Omega)}+\norm{\vec g}_{L^q(\Omega)}+\norm{h}_{L^q(\Omega)}\right).
\]
On the other hand, by \eqref{eq4.23nx} and the Poincar\'e's inequality, we have
\[
\int_\Omega \abs{\vec u}^2 \leq N \int_\Omega \abs{\nabla \vec u}^2 \leq N\left( \norm{\vec F}_{L^{q}(\Omega)}+\norm{\vec g}_{L^q(\Omega)}+\norm{h}_{L^q(\Omega)}\right)^2.
\]
Therefore, by combining the above two inequalities, we obtain
 \begin{equation}					\label{eq4.26dj}
\abs{\vec u}_{0;\Omega}\leq N\left( \norm{\vec F}_{L^{q}(\Omega)}+\norm{\vec g}_{L^q(\Omega)}+\norm{h}_{L^q(\Omega)}\right).
\end{equation}
The desired estimate \eqref{eq3.2bx} now follows from \eqref{eq4.23qt}, \eqref{eq4.26dj}, \eqref{eq4.1aa}, and the Sobolev's inequality.
The proof is complete.
\hfill\qedsymbol

\section{Applications}				\label{sec:app}
\subsection{Quasilinear system}
As a first application, we consider the quasilinear system,
\begin{equation}				\label{eq5.1an}
\nabla\times (\mathcal A(x,\vec u)\nabla\times \vec u)-\nabla(\mathcal B(x,\vec u)\nabla \cdot \vec u)= \vec f \quad\text{in }\;\Omega.
\end{equation}
Here we assume $\mathcal A, \mathcal B:\Omega\times\bR^3\to\bR$ satisfy the following conditions:
\begin{enumerate}[i)]
\item
$\nu\leq \mathcal A, \mathcal B \leq \nu^{-1}$ for some constants $\nu \in (0,1]$.
\item
$\mathcal A$ and $\mathcal B$ are H\"older continuous in $\Omega\times\bR^3$; i.e. $\mathcal A, \mathcal B \in C^\mu(\Omega\times\bR^3)$ for $\mu\in (0,1)$.
\end{enumerate}

\begin{theorem}				\label{thm:app1}
Let $\mathcal A$ and $\mathcal B$ satisfy the above conditions and let $\vec u\in Y^{1,2}(\Omega)$ be a weak solution of the system \eqref{eq5.1an} with $\vec f \in L^q(\Omega)$ for $q>3$.
Then, we have $\vec u \in C^{1,\alpha}(\Omega)$, where $\alpha=\min(\mu, 1-3/q)$.
\end{theorem}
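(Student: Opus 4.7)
The proof is a standard bootstrap reducing the quasilinear system to the linear theory of Theorem~\ref{thm3.2a}. I begin by setting $a(x) := \mathcal{A}(x,\vec u(x))$ and $b(x) := \mathcal{B}(x,\vec u(x))$. By condition (i) these satisfy \eqref{eq0.2bh}, and $\vec u$ is then a weak solution in $Y^{1,2}(\Omega)$ of the linear system $\nabla\times(a\nabla\times\vec u) - \nabla(b\nabla\cdot\vec u) = \vec f$ with measurable coefficients. Since $\vec f \in L^q(\Omega)$ with $q > 3 > 3/2$, Theorem~\ref{thm3.2a} yields $\vec u \in C^{\alpha_0}_{\mathrm{loc}}(\Omega)$ for some $\alpha_0=\alpha_0(\nu,q)\in(0,1)$. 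Condition (ii) then forces $a, b \in C^{\mu\alpha_0}_{\mathrm{loc}}(\Omega)$ by composition, since on compact subsets
\[
|\mathcal{A}(x,\vec u(x)) - \mathcal{A}(y,\vec u(y))| \le N\bigl(|x-y|^\mu + |\vec u(x)-\vec u(y)|^\mu\bigr) \le N|x-y|^{\mu\alpha_0}.
\]

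Next, I would upgrade $\vec u$ to $C^{1,\alpha_1}_{\mathrm{loc}}$ for some $\alpha_1\in(0,1)$ via a Campanato-type freezing argument. Fix $B_r = B_r(x_0) \subset\subset \Omega$, set $a_0 := a(x_0)$, $b_0 := b(x_0)$, and decompose $\vec u = \vec v + \vec w$ on $B_r$, where $\vec v$ solves the frozen problem
\[
\nabla\times(a_0 \nabla\times \vec v) - \nabla(b_0 \nabla\cdot\vec v) = 0 \;\;\text{in }B_r,\qquad \vec v = \vec u \;\;\text{on }\partial B_r.
\]
By the vector identity \eqref{eq0.2ab}, this is the strongly elliptic constant-coefficient Lam\'e-type system $-a_0\Delta\vec v + (a_0-b_0)\nabla(\nabla\cdot\vec v) = 0$, so $\vec v$ is smooth in $B_r$ and satisfies the classical interior gradient decay
\[
\int_{B_\rho} |\nabla\vec v - (\nabla\vec v)_\rho|^2 \le N(\rho/r)^{5}\int_{B_r} |\nabla\vec v - (\nabla\vec v)_r|^2,\quad \rho\le r.
\]
The remainder $\vec w$ vanishes on $\partial B_r$ and satisfies the same constant-coefficient operator with right-hand side (schematically) $\nabla\times((a_0-a)\nabla\times\vec u) - \nabla((b_0-b)\nabla\cdot\vec u) + \vec f$. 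Testing against $\vec w$ and using $\|a-a_0\|_{L^\infty(B_r)} + \|b-b_0\|_{L^\infty(B_r)} \le Nr^{\mu\alpha_0}$ together with H\"older's inequality to control $\vec f$ gives
\[
\int_{B_r} |\nabla \vec w|^2 \le Nr^{2\mu\alpha_0}\int_{B_r}|\nabla\vec u|^2 + Nr^{5-6/q}\norm{\vec f}_{L^q(B_r)}^2.
\]
Combining these two bounds and invoking the iteration lemma \cite[Lemma~2.1, p.~86]{Gi83} produces a Morrey--Campanato estimate for $\nabla\vec u$ with exponent $3+2\alpha_1$, whence Campanato's characterization of H\"older continuity gives $\vec u \in C^{1,\alpha_1}_{\mathrm{loc}}(\Omega)$.

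Finally, I bootstrap once more. Since $\vec u \in C^{1,\alpha_1}_{\mathrm{loc}}$ is locally Lipschitz, composition now yields $a, b \in C^{\mu}_{\mathrm{loc}}(\Omega)$, and re-running the freezing argument with this improved coefficient regularity and the same $L^q$ forcing produces $\vec u \in C^{1,\alpha}_{\mathrm{loc}}(\Omega)$ with the optimal exponent $\alpha = \min(\mu,\,1-3/q)$. The main obstacle is the freezing step: one must balance the coefficient oscillation $r^{\mu\alpha_0}$ against the $L^q$-induced term $r^{(5-6/q)/2}$ so that the iteration lemma produces a strictly positive gain in regularity, and the constant-coefficient decay crucially depends on identity \eqref{eq0.2ab} reducing the principal part to a Lam\'e system whose interior smoothness is classical.
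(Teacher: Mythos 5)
Your proposal follows essentially the same route as the paper: apply Theorem~\ref{thm3.2a} to get $\vec u\in C^{\beta}_{\mathrm{loc}}$, conclude by composition that the frozen coefficients $a,b$ are H\"older continuous, and then run the ``freezing coefficients'' Schauder iteration (with identity \eqref{eq0.2ab} reducing the frozen system to a constant-coefficient Lam\'e-type system). The paper compresses this to a one-line reference to \cite[Theorem~2.2]{KK02}; your extra step --- bootstrapping from $\vec u\in C^{1,\alpha_1}_{\mathrm{loc}}$ to full $C^\mu$ regularity of the composed coefficients and re-running the freezing argument --- is exactly what is needed to reach the sharp exponent $\alpha=\min(\mu,1-3/q)$, and correctly makes explicit what the paper leaves implicit.
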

\begin{proof}
By Theorem~\ref{thm3.2a}, we know $\vec u \in C^{\beta}(\Omega)$ for some $\beta \in (0,1)$.
Then the coefficients $a(x):=\mathcal A(x,\vec u(x))$ and $b(x):=\mathcal B(x,\vec u(x))$ are H\"older continuous with some exponent $\gamma \in (0,1)$.
The rest of proof relies on the well-known ``freezing coefficients'' method in Schauder theory and is omitted; c.f. \cite[Theorem~2.2]{KK02}.
\end{proof}

\begin{remark}
In Theorem~\ref{thm:app1}, if one assumes instead that $\mathcal A, \mathcal B \in C^{k,\mu}(\Omega\times \bR^3)$ and $\vec f \in C^{k-1,\mu}(\Omega)$ with $k \in \bZ_{+}$ and $\mu\in(0,1)$, then one can show that $\vec u\in C^{k+1,\mu}(\Omega)$; in particular, $\vec u$ becomes a classical solution of the system \eqref{eq5.1an}.
\end{remark}

\subsection{Maxwell's system in quasi-static electromagnetic fields with temperature effect}
As mentioned in the introduction, the problem \eqref{eq0.3cc} arises from the Maxwell's system in a quasi-static electromagnetic field.
Especially, if the electric conductivity strongly depends on the temperature, then by taking the temperature effect into consideration the classical Maxwell system in a quasi-static electromagnetic field reduces to the following mathematical model (see Yin \cite{Yin97}):
\[
\left\{\begin{array}{c}
\vec H_t+\nabla \times(\rho(u)\nabla \times \vec H)=0,\\
\nabla\cdot \vec H=0,\\
u_t-\Delta u=\rho(u)\, \abs{\nabla\times \vec H}^2,
\end{array}\right.
\]
where $\vec H$ and $u$ represents, respectively, the strength of the magnetic field and temperature while $\rho(u)$ denotes the electrical resistivity of the material, which is assumed to be bounded below and above by some positive constants; i.e.,
\begin{equation}					\label{eq6.3ap}
\nu \leq \rho \leq v^{-1}\;\text{ for some }\;\nu\in (0,1].
\end{equation}
We are thus lead to consider the following Dirichlet problem in the steady-state case:
\begin{equation}			\label{eq6.4ii}
\left\{\begin{array}{c}
\nabla\times(\rho(u)\nabla\times \vec H)=0\quad\text{in }\;\Omega,\\
\nabla\cdot \vec H=0\quad\text{in }\;\Omega,\\
\vec H = \vec \Psi\quad\text{on }\;\partial\Omega,\\
-\Delta u=\rho(u)\,\abs{\nabla\times \vec H}^2\quad\text{in }\;\Omega,\\
u = \phi\quad\text{on }\;\partial\Omega,
\end{array}\right.
\end{equation}
where we assume that $\vec \Psi$ and $\phi$ are functions in $W^{1,q}(\Omega)$ for $q>3$.
Existence of a pair of weak solutions $(\vec H, u)$ was proved in Yin \cite{Yin97} and local H\"older continuity of the pair $(\vec H, u)$ in $\Omega$ was proved by the authors in \cite{KK02}.
Here, we prove that the pair $(\vec H, u)$ is indeed uniformly H\"older continuous in $\overline \Omega$.
\begin{theorem}
\label{thm:2-1}
Let $\Omega$ satisfy the hypothesis of Theorem~\ref{thm3.1t} and $\rho$ satisfy the condition \eqref{eq6.3ap}.
Let $(\vec H,u)$ be the weak solution of  the problem \eqref{eq6.4ii}.
Then we have $(\vec H,u)\in C^\alpha(\overline \Omega)$ for some $\alpha \in (0,1)$.
In particular, $\vec H$ and  $u$ are bounded in $\Omega$.
\end{theorem}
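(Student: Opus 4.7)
The plan is to decouple the system, first obtaining regularity for $\vec H$ and then feeding this information into the scalar equation for $u$.

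\textbf{Step 1 (Hölder regularity of $\vec H$).} Set $a(x) := \rho(u(x))$, which is measurable and satisfies the ellipticity bound \eqref{eq0.2bh} because $\rho$ does. The first three equations of \eqref{eq6.4ii} then fit the template \eqref{eq2.13cc} with $\vec f = \vec g = 0$, $h = 0$, and boundary datum $\vec\Psi \in W^{1,q}(\Omega)$ with $q>3$. The compatibility condition $\int_{\partial\Omega}\vec\Psi\cdot\vec n = 0$ is automatic from $\nabla\cdot\vec H = 0$ by the divergence theorem. Invoking the extension of Theorem~\ref{thm3.1t} provided by Remark~\ref{rmk2.10} yields
\[
\|\vec H\|_{C^{\alpha_1}(\overline\Omega)} \leq N\|\vec\Psi\|_{W^{1,q}(\Omega)},\qquad \alpha_1 = \alpha_1(\nu,q,\Omega)\in(0,1).
\]

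\textbf{Step 2 (Morrey estimate for $\nabla\vec H$).} I would next re-examine the proof of Theorem~\ref{thm3.1t} to extract a quantitative control of $\nabla\vec H$. Applied to $\vec H$ (after subtracting a $W^{1,q}$-extension $\vec W$ of $\vec\Psi$, so that $\vec H - \vec W \in W^{1,2}_0(\Omega)$), the Morrey-type estimate \eqref{eq4.20ys} together with the bound $\|\nabla \vec W\|_{L^2}$ gives
\[
\int_{\Omega_r(x_0)}|\nabla\vec H|^2 \leq C\,r^{1+2\alpha_1}\bigl(\|\vec\Psi\|_{W^{1,q}(\Omega)}^2 + 1\bigr),\qquad \forall\,x_0\in\Omega,\;\;0<r\leq\diam\Omega.
\]
In particular, $F := \rho(u)\abs{\nabla\times\vec H}^2$ belongs to the Morrey space with exponent $1+2\alpha_1 > 1$.

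\textbf{Step 3 (Hölder regularity of $u$).} Decompose $u = u_1 + u_2$ on $\Omega$, where $u_2$ is the harmonic extension with $u_2 = \phi$ on $\partial\Omega$, and $u_1 := u - u_2$ solves $-\Delta u_1 = F$ in $\Omega$ with $u_1 = 0$ on $\partial\Omega$. Since $\phi\in W^{1,q}(\Omega)$ with $q>3$, Morrey's embedding gives a Hölder trace $\phi|_{\partial\Omega}\in C^{1-3/q}(\partial\Omega)$, and the classical boundary Hölder regularity for harmonic functions on bounded Lipschitz domains yields $u_2 \in C^{\alpha_2}(\overline\Omega)$ for some $\alpha_2>0$. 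For $u_1$, I would run a Campanato-type argument modeled on the one proving \eqref{eq4.20ys}: on each $\Omega_r(x_0)$ compare $u_1$ with the harmonic function having the same boundary values on $\partial\Omega_r(x_0)$, apply the boundary Hölder bound \eqref{eq4.21np} to the harmonic part, and use Caccioppoli for the correction together with the Morrey bound on $F$ to control the remainder. The standard iteration lemma then produces $\int_{\Omega_r(x_0)}|\nabla u_1|^2 \leq C r^{1+2\alpha_3}$, from which Campanato's characterization gives $u_1 \in C^{\alpha_3}(\overline\Omega)$. Taking $\alpha = \min(\alpha_2,\alpha_3)$ yields $u\in C^\alpha(\overline\Omega)$, and combined with Step~1, the pair $(\vec H, u)\in C^\alpha(\overline\Omega)$.

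\textbf{Main obstacle.} The delicate point is the global regularity of the scalar Poisson problem $-\Delta u_1 = F$ with $F$ only in a Morrey space $L^{1,1+2\alpha_1}$ and on a merely Lipschitz domain. One cannot directly invoke standard $L^p$-based Schauder or Calderón–Zygmund theory because $F$ is not a priori in $L^p$ with $p > 3/2$. Instead the argument must go through a Morrey–Campanato decomposition up to the boundary, exploiting the fact (again from classical Lipschitz-domain theory) that harmonic functions vanishing on a portion of a Lipschitz boundary have Hölder decay of their Dirichlet energy. This is the same mechanism already used inside the proof of Theorem~\ref{thm3.1t}, so the obstruction is essentially technical rather than conceptual.
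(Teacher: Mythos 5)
Your Steps~1 and~2 match the paper: $\vec H\in C^\alpha(\overline\Omega)$ via Theorem~\ref{thm3.1t} and Remark~\ref{rmk2.10}, and $\nabla\vec H$ inherits the Morrey--Campanato bound \eqref{eq4.20ys}. It is Step~3 where your approach diverges from the paper's, and where a genuine gap appears.

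You retain the temperature equation as $-\Delta u = F$ with $F=\rho(u)\,\abs{\nabla\times\vec H}^2$, which is only controlled in the $L^1$-Morrey sense: $\int_{\Omega_r}\abs{F}\lesssim r^{1+2\alpha_1}$. When you then compare $u_1$ with a harmonic function on $\Omega_r(x_0)$ and try to control the correction $w$ (which solves $-\Delta w=F$ with $w=0$ on $\partial\Omega_r$), the energy identity $\int_{\Omega_r}\abs{\nabla w}^2=\int_{\Omega_r}Fw$ requires, via Sobolev, control of $\norm{F}_{L^{6/5}(\Omega_r)}$, not merely of $\int_{\Omega_r}\abs{F}$. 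Since $F=\rho\abs{\nabla\times\vec H}^2$ and you only have $\nabla\vec H\in L^2$, you have $F\in L^1$ a priori, and the $L^1$-Morrey bound does not upgrade itself to an $L^{6/5}$-Morrey bound with the right exponent. Without some further device (a reverse H\"older/Gehring improvement for $\nabla\vec H$, say, which would itself require justification up to the Lipschitz boundary), the iteration lemma does not close. So the ``main obstacle'' you flag is not merely technical bookkeeping; as stated, the argument does not go through.

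The paper avoids this entirely by a structural observation you are missing: using the vector calculus identity $\nabla\cdot(\vec F\times\vec G)=(\nabla\times\vec F)\cdot\vec G-\vec F\cdot(\nabla\times\vec G)$ together with the first equation $\nabla\times(\rho(u)\nabla\times\vec H)=0$, the source term is rewritten in divergence form,
\[
\rho(u)\,\abs{\nabla\times\vec H}^2=\nabla\cdot\bigl(\vec H\times(\rho(u)\nabla\times\vec H)\bigr)=:\nabla\cdot\vec\Phi.
\]
Since $\vec H$ is bounded (Step~1) and $\rho\nabla\times\vec H$ obeys the $L^2$-Morrey bound (Step~2), the vector field $\vec\Phi$ satisfies $\int_{\Omega_r}\abs{\vec\Phi}^2\lesssim r^{1+2\alpha}$. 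Now the Poisson problem $-\Delta u=\nabla\cdot\vec\Phi$ with $u=\phi$ on $\partial\Omega$ and $\vec\Phi$ in $L^{2,1+2\alpha}$ falls squarely within the Campanato framework already established inside the proof of Theorem~\ref{thm3.1t}: testing the correction with itself gives $\int_{\Omega_r}\abs{\nabla w}^2\leq\int_{\Omega_r}\abs{\vec\Phi}^2\lesssim r^{1+2\alpha}$, and the iteration closes cleanly. This divergence-structure trick is the key idea, and incorporating it repairs Step~3 of your argument.
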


\begin{proof}
By Theorem~\ref{thm3.1t} and Remark~\ref{rmk2.10},  we find that $\vec H \in C^\alpha(\overline \Omega)$ for some $\alpha \in (0,1)$ and satisfies the estimate
\begin{equation}					\label{eq6.9nn}
\norm{\vec H}_{C^{\alpha}(\overline \Omega)}\le N  \norm{\vec \Psi}_{W^{1,q}(\Omega)}.
\end{equation}
Also, notice from \eqref{eq4.20ys} and Remark~\ref{rmk2.10} that for all $x_0\in\Omega$ and $0<r < \diam\Omega$, we have
\begin{equation}					\label{eq6.10qv}
\int_{\Omega_r(x_0)} \abs{\nabla \vec H}^2 \leq N r^{1+2\alpha} \norm{\vec \Psi}_{W^{1,q}(\Omega)}^2.
\end{equation}
On the other hand, using the vector calculus identity,
\[
\nabla \cdot(\vec F\times \vec G)=(\nabla \times \vec F)\cdot \vec G-\vec F\cdot(\nabla \times \vec G),
\]
together with the first equation $\nabla \times(\rho(u)\nabla \times \vec H)=0$ in \eqref{eq6.4ii}, we find that $u$ satisfies
\[
-\Delta u=\nabla \cdot(\vec H\times(\rho(u)\nabla \times \vec H))\quad\text{in }\;\Omega.
\]
By \eqref{eq6.9nn}  and \eqref{eq6.10qv}, we see that $\vec \Phi:=\vec H\times(\rho(u)\nabla \times \vec H)$ satisfies the following estimate:
\begin{equation}					\label{eq6.11ub}
\int _{\Omega_r(x_0)}\abs{\vec \Phi}^2\leq N r^{1+2\alpha} \norm{\vec \Psi}_{W^{1,q}(\Omega)}^4,\quad \forall x_0\in\Omega,\;\; \forall r \in (0, \diam\Omega).
\end{equation}
Therefore,  $u$ is a solution of the Dirichlet problem
\[
\left\{\begin{array}{c}
-\Delta u = \nabla\cdot \vec \Phi\quad\text{in }\;\Omega,\\
u = \phi\quad\text{on }\;\partial\Omega,
\end{array}\right.
\]
where $\vec \Phi$ satisfies the Morrey-Campanato type estimate \eqref{eq6.11ub} and $\phi\in W^{1,q}(\Omega)$, and thus by a well-known elliptic regularity theory, we have
\[
\norm{u}_{C^\alpha(\overline\Omega)} \leq N \left( \norm{\vec \Psi}_{W^{1,q}(\Omega)}^2 + \norm{\phi}_{W^{1,q}(\Omega)}\right).
\]
In particular, we see that $\vec H$ and $u$ are bounded in $\Omega$.
The proof is complete.
\end{proof}

\begin{remark}					\label{rmk:2-1}
In Theorem~\ref{thm:2-1}, if one assumes further that $\rho\in C^k(\bR)$, where $k\in \bZ_{+}$, then by Theorem~\ref{thm:app1} and the bootstrapping method, one finds that $\vec H \in C^{k,\alpha}(\Omega)\cap C^\alpha(\overline\Omega)$ and $u \in C^{k+1,\alpha}(\Omega)\cap C^{\alpha}(\overline\Omega)$; see \cite[Theorem~3.2 and Remark~3.3]{KK02}.
In particular, if $\rho\in C^2(\bR)$, then the pair $(\vec H,u)$ becomes a classical solution of the problem \eqref{eq6.4ii}.
\end{remark}

\section{Green's function}					\label{sec:green}
In this section, we will discuss the Green's functions (more appropriately, it should be called Green's matrices) of the operator $L$ in arbitrary domains.
Let $\Sigma$ be any subset of $\overline{\Omega}$ and $u$ be a $Y^{1,2}(\Omega)$ function.
Then we shall say $u$ vanishes on $\Sigma$ (in the sense of $Y^{1,2}(\Omega)$) if $u$ is a limit in $Y^{1,2}(\Omega)$ of a sequence of functions in $C^\infty_0(\overline \Omega\setminus\Sigma)$.

\begin{definition}					\label{def2}
We say that a $3\times 3$ matrix valued function $\vec G(x,y)$, with entries $G_{ij}(x,y)$  defined on the set $\bigset{(x,y)\in\Omega\times\Omega: x\neq y}$, is a Green's function of $L$ in $\Omega$ if it satisfies the following properties:
\begin{enumerate}[i)]
\item
$\vec G(\cdot,y) \in W^{1,1}_{loc}(\Omega)$ and $L\vec G(\cdot,y)=\delta_y I$ for all $y\in\Omega$, in the sense that for $k=1,2, 3$,
\[
\int_{\Omega} a (\nabla\times \vec G(\cdot,y)\vec e_k) \cdot (\nabla \times \vec \phi) + b (\nabla \cdot \vec G(\cdot,y) \vec e_k)(\nabla\cdot \vec \phi)= \phi^k(y),\quad
\forall \vec \phi \in C^\infty_0(\Omega),
\]
where $\vec e_k$ denotes the $k$-th unit column vector; i.e., $\vec e_1=(1,0,0)^T$, etc.
\item
$\vec G(\cdot,y) \in Y^{1,2}(\Omega\setminus B_r(y))$ for all $y\in\Omega$ and $r>0$ and $\vec G(\cdot,y)$ vanishes on $\partial\Omega$.
\item
For any $\vec f \in C^\infty_0(\Omega)$, the function $\vec u$ given by
\[
\vec u(x):=\int_\Omega \vec G(y,x) \vec f(y)\,dy
\]
is a weak solution $Y^{1,2}_0(\Omega)$ of the problem \eqref{eq2.3ws}; i.e., $\vec u$ belongs to $Y^{1,2}_0(\Omega)$ and satisfies $L \vec u=\vec f$ in the sense of the identity \eqref{eq2.1ws}.
\end{enumerate}
\end{definition}
We note that part iii) of the above definition gives the uniqueness of a Green's matrix; see  Hofmann and Kim \cite{HK07}.
We shall hereafter say that $\vec G(x,y)$ is the Green's matrix of $L$ in $\Omega$ if it satisfies all the above properties.
Then, by using Theorem~\ref{thm3.2a} and following the proof of \cite[Theorem~4.1]{HK07}, we obtain the following theorem, where we use the notation
\[
a\wedge b:=\min(a,b),\quad a \vee b:=\max(a,b),\quad \text{where }\;a,b \in \bR.
\]

\begin{theorem}					\label{thm5.6gr}
Let $\Omega$ be a (possibly unbounded) domain in $\bR^3$.
Denote $d_x:=\dist(x,\partial\Omega)$ for $x\in\Omega$; we set $d_x=\infty$ if $\Omega=\bR^3$.
Then, there exists a unique Green's function $\vec G(x,y)$ of the operator $L$ in $\Omega$, and for all $x, y\in\Omega$ satisfying $0<\abs{x-y}<d_x \wedge d_y$, we have
\begin{equation}					\label{eq5.7gr}
\abs{\vec G(x,y)} \leq N \abs{x-y}^{-1},\quad \text{where }\;N=N(\nu)>0.
\end{equation}
Also, we have $\vec G(x,y)=\vec G(y,x)^T$ for all $x, y\in \Omega$ with $x\neq y$.
Moreover, $\vec G(\cdot,y)\in C^\alpha(\Omega\setminus\set{y})$ for some $\alpha=\alpha(\nu) \in(0,1)$ and satisfies the following estimate:
\begin{equation}					\label{eq5.8gr}
\abs{\vec G(x,y)-\vec G(x',y)} \leq N \abs{x-x'}^{\alpha} \abs{x-y}^{-1-\alpha},\quad \text{where }\;N=N(\nu)>0,
\end{equation}
provided that $\abs{x-x'}<\abs{x-y}/2\,$ and $\abs{x-y}<d_x\wedge d_y$.
\end{theorem}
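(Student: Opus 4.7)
The plan is to adapt the construction of Hofmann and Kim \cite{HK07}, whose essential analytic inputs are interior local boundedness and H\"older continuity of $L$-solutions (supplied here by Theorem~\ref{thm3.2a}) together with a Caccioppoli inequality for $L$ (Lemma~\ref{lem4.2tt}). First I would construct approximate Green's matrices: for $y\in\Omega$, $\epsilon>0$ with $B_\epsilon(y)\subset\subset\Omega$, and $k\in\{1,2,3\}$, let $\vec v^\epsilon_{y,k}\in Y^{1,2}_0(\Omega)$ be the unique weak solution of $L\vec v^\epsilon_{y,k}=\abs{B_\epsilon(y)}^{-1}\chi_{B_\epsilon(y)}\vec e_k$ with zero Dirichlet data. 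Existence is immediate because $\abs{B_\epsilon(y)}^{-1}\chi_{B_\epsilon(y)}\in L^{6/5}(\Omega)$ (see \S\ref{sec2.2ws}). Set $G^\epsilon_{ij}(x,y):=(v^\epsilon_{y,j})^i(x)$.

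The core step is the uniform pointwise bound $\abs{\vec G^\epsilon(x,y)}\le N\abs{x-y}^{-1}$ whenever $4\epsilon<\abs{x-y}<d_x\wedge d_y$. Since $\vec f\equiv 0$ on $B_r(x)$ with $r=\abs{x-y}/2$, Theorem~\ref{thm3.2a} gives the pointwise estimate
\[
\abs{\vec v^\epsilon_{y,k}(x)}\le Nr^{-3/2}\norm{\vec v^\epsilon_{y,k}}_{L^2(B_r(x))}.
\]
The required growth control $\norm{\vec v^\epsilon_{y,k}}_{L^2(B_r(x))}\le Nr^{1/2}$ on balls disjoint from $B_\epsilon(y)$ is obtained by the standard Hofmann--Kim procedure: iterate the energy identity against cutoffs supported in dyadic annuli $B_{2^{j+1}r}(y)\setminus B_{2^jr}(y)$, use the Sobolev embedding $Y^{1,2}_0(\Omega)\hookrightarrow L^6(\Omega)$ (\eqref{eqP-14}), and bootstrap with the local $L^\infty$ estimate of Theorem~\ref{thm3.2a}. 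This Moser-type iteration is the technical heart of the construction and is where I expect the bulk of the work to be, since all of the novelty (scalar coefficients $a(x)$, $b(x)$ in place of constants) has to be absorbed into the interior estimate supplied by Theorem~\ref{thm3.2a}.

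With the uniform bound established, I would extract a subsequence $\vec v^{\epsilon_n}_{y,k}$ converging weakly in $Y^{1,2}(\Omega\setminus\overline{B_r(y)})$ and locally uniformly on $\Omega\setminus\{y\}$, using Caccioppoli on sets $\Omega\setminus B_r(y)$ together with the equi-H\"older continuity afforded by \eqref{eq3.3cc}. The limit defines $\vec G(\cdot,y)$, and checking properties (i)--(iii) of Definition~\ref{def2} is routine: property (i) by passing to the limit in the weak formulation; property (ii) from the Caccioppoli/energy bound on exterior regions; property (iii) by writing $\vec u=\int_\Omega\vec G(y,\cdot)\vec f(y)\,dy$ and comparing to the direct solution of $L\vec u=\vec f$. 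Uniqueness of $\vec G$ follows from property (iii) exactly as in \cite{HK07}. The bound \eqref{eq5.7gr} descends from the uniform bound on $\vec G^\epsilon$.

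The symmetry $\vec G(x,y)=\vec G(y,x)^T$ comes from the formal self-adjointness of $L$ (since $a$, $b$ are scalar): testing the equation for $\vec v^\epsilon_{y,k}$ against $\vec v^\delta_{z,j}$ and conversely shows
\[
\fint_{B_\delta(z)}(v^\epsilon_{y,k})^j\,dx=\fint_{B_\epsilon(y)}(v^\delta_{z,j})^k\,dx,
\]
and passing $\epsilon,\delta\to 0$ with locally uniform convergence yields $G_{jk}(z,y)=G_{kj}(y,z)$. Finally, the H\"older estimate \eqref{eq5.8gr} is obtained by applying the H\"older part of Theorem~\ref{thm3.2a} to $\vec G(\cdot,y)$ on the ball $B_{\abs{x-y}/2}(x)\subset\Omega\setminus\{y\}$, where the $L^2$ norm of $\vec G(\cdot,y)$ over this ball is bounded by $N\abs{x-y}^{-1}\cdot\abs{x-y}^{3/2}=N\abs{x-y}^{1/2}$ via \eqref{eq5.7gr}, producing the claimed decay exponent.
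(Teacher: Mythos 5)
Your proposal is correct and follows essentially the same route as the paper, which proves Theorem~\ref{thm5.6gr} simply by invoking the interior H\"older estimate of Theorem~\ref{thm3.2a} and then citing the construction of \cite[Theorem~4.1]{HK07}; you have correctly reconstructed the Hofmann--Kim machinery (approximate Green's matrices via the averaged Dirac data, uniform $\abs{x-y}^{-1}$ bound from local boundedness plus energy/dyadic-annulus estimates, weak and locally uniform limits, verification of Definition~\ref{def2}, symmetry via testing approximations against each other, and the H\"older decay from re-applying Theorem~\ref{thm3.2a} on $B_{\abs{x-y}/2}(x)$) that the paper leaves implicit.
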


Next, we consider the Green's functions of the system \eqref{eq0.1aa}.
\begin{definition}					\label{def3g}
We say that a $3\times 3$ matrix valued function $\vec G(x,y)$, which is defined on the set $\bigset{(x,y)\in\Omega\times\Omega: x\neq y}$, is a Green's function of the system \eqref{eq0.1aa} in $\Omega$ if it satisfies the following properties:
\begin{enumerate}[i)]
\item
$\vec G(\cdot,y) \in W^{1,1}_{loc}(\Omega)$ for all $y\in\Omega$ and for $k=1,2, 3$, we have
\begin{align*}
\int_{\Omega} a (\nabla\times \vec G(\cdot,y)\vec e_k)\cdot (\nabla \times \vec \phi) &= \phi^k(y),\quad
\forall \vec \phi \in C^\infty_0(\Omega),\\
\int_\Omega \vec G(\cdot,y)\vec e_k \cdot \nabla \psi &= 0, \quad \forall \psi \in C^\infty_0(\Omega),
\end{align*}
where $\vec e_k$ denotes the $k$-th unit column vector; i.e., $\vec e_1=(1,0,0)^T$, etc.
\item
$\vec G(\cdot,y) \in Y^{1,2}(\Omega\setminus B_r(y))$ for all $y\in\Omega$ and $r>0$ and $\vec G(\cdot,y)$ vanishes on $\partial\Omega$.
\item
For any $\vec f \in \mathcal D(\Omega)$, the function $\vec u$ given by
\[
\vec u(x):=\int_\Omega \vec G(y,x) \vec f(y)\,dy
\]
is a weak solution in $Y^{1,2}_0(\Omega)$ of the problem
\[
\left\{
\begin{array}{c}
\nabla\times (a(x)\nabla\times \vec u)=\vec f \quad\text{in }\;\Omega,\\
\nabla \cdot \vec u=0\quad\text{in }\;\Omega,\\
\vec u=0\quad \text{on }\;\partial\Omega,
\end{array}
\right.
\]
that is, $\vec u$ belongs to $Y^{1,2}_0(\Omega)$ and satisfies the above system in the sense of the identities \eqref{eq2.4ws} and \eqref{eq2.5ws} with $\vec g=0$ and $h=0$.
\end{enumerate}
\end{definition}

Then by the same reasoning as above, Theorem~\ref{thm5.6gr} also applies to the Green's functions of the system \eqref{eq0.1aa}.
Moreover, in the case when $\Omega$ is a bounded Lipschitz domain satisfying the condition \eqref{eq2.5ef}, a global version of estimate \eqref{eq5.7gr} is available thanks to Theorem~\ref{thm3.1t} and \cite[Theorem~3.13]{KK10}.

\begin{theorem}					\label{thm5.8gr}
The statement of Theorem~\ref{thm5.6gr} remains valid for the Green's functions of the system \eqref{eq0.1aa}.
Moreover, if we assume that $\Omega$ is a bounded Lipschitz domain satisfying the condition \eqref{eq2.5ef}, then for all $x, y\in\Omega$ with $x\neq y$, we have
\[
\abs{\vec G(x,y)}  \leq N \bigset{d_x\wedge \abs{x-y}}^{\alpha} \bigset{d_y\wedge \abs{x-y}}^{\alpha} \abs{x-y}^{-1-2\alpha},
\]
where $\alpha=\alpha(\nu,\Omega) \in (0,1)$ and $N=N(\nu, \Omega)$.
\end{theorem}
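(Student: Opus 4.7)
The first claim---that Theorem~\ref{thm5.6gr} carries over to Green's functions of the system~\eqref{eq0.1aa}---is obtained by repeating the construction in the proof of \cite[Theorem~4.1]{HK07} essentially verbatim, using Theorem~\ref{thm3.2a} in place of the relevant interior H\"older estimate. The only adjustments concern the solenoidal constraint $\nabla\cdot\vec u=0$, enforced through part~i) of Definition~\ref{def3g}; in particular, the symmetry $\vec G(x,y)=\vec G(y,x)^T$ follows from the formal self-adjointness of~\eqref{eq0.1aa} when $a$ is scalar.

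For the global bound, the plan is to combine the interior estimate~\eqref{eq5.7gr} with the global H\"older regularity of Theorem~\ref{thm3.1t} via the reduction scheme of \cite[Theorem~3.13]{KK10}. Fix $x,y\in\Omega$ with $x\neq y$ and set $R:=\abs{x-y}/4$; the case in which both $d_x,d_y\geq\abs{x-y}$ is handled directly by~\eqref{eq5.7gr}, so we may assume $d_x<\abs{x-y}$ (the analogous assumption on $d_y$ being symmetric via $\vec G(x,y)=\vec G(y,x)^T$). Write $\vec H:=\vec G(\cdot,y)\vec e_k$ and choose a smooth cutoff $\eta$ with $\eta\equiv 0$ on $B_R(y)$, $\eta\equiv 1$ outside $B_{3R/2}(y)$, and $\abs{\nabla\eta}\leq NR^{-1}$. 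A direct computation, using $\nabla\cdot\vec H=0$ in $\Omega$ and $\nabla\times(a\nabla\times\vec H)=0$ away from $y$, shows that $\tilde{\vec H}:=\eta\vec H\in W^{1,2}_0(\Omega)$ is a weak solution of the problem~\eqref{eq0.3cc} with data
\[
\vec f=\nabla\eta\times(a\nabla\times\vec H),\quad \vec g=a\nabla\eta\times\vec H,\quad h=\nabla\eta\cdot\vec H,
\]
all supported in the annulus $A:=\set{R<\abs{z-y}<3R/2}\cap\Omega$. The compatibility conditions $\nabla\cdot\vec f=0$ (from the vector identity $\nabla\cdot(\nabla\eta\times V)=-\nabla\eta\cdot\nabla\times V$ applied to $V=a\nabla\times\vec H$ on $\supp\eta$) and $\int_\Omega h=0$ (using $\nabla\cdot\vec H=0$ and the vanishing of $\tilde{\vec H}$ on $\partial\Omega$) are then routine to verify.

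The interior bound $\abs{\vec H}\leq NR^{-1}$, together with a Caccioppoli estimate for $\vec H$ on a slightly enlarged annulus, controls $\norm{\vec f}_{L^{q/2}(\Omega)}+\norm{\vec g}_{L^q(\Omega)}+\norm{h}_{L^q(\Omega)}$ by $NR^{-1-\alpha_0}$ for a suitable $q>3$ and some $\alpha_0=\alpha_0(\nu,q,\Omega)\in(0,1)$; Theorem~\ref{thm3.1t} applied to $\tilde{\vec H}$ then yields $[\tilde{\vec H}]_{\alpha_0;\Omega}\leq NR^{-1-\alpha_0}$. Evaluating at the closest boundary point $x_0$ to $x$ (where $\tilde{\vec H}(x_0)=0$) gives the one-sided inequality
\[
\abs{\vec G(x,y)}\leq Nd_x^{\alpha_0}\abs{x-y}^{-1-\alpha_0},\qquad d_x<\abs{x-y}.
\]
If $d_y\geq\abs{x-y}$, this already agrees with the claimed inequality for any $\alpha\leq\alpha_0$. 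Otherwise $d_y<\abs{x-y}$, and we iterate: by the self-adjointness above, $y\mapsto\vec G(x,y)^T$ solves the same system in $y$, vanishes on $\partial\Omega$, and by the one-sided bound just proved satisfies $\abs{\vec G(x,w)}\leq Nd_x^{\alpha_0}\abs{x-w}^{-1-\alpha_0}$ throughout the new cutoff annulus around $y$. Repeating the cutoff-plus-Theorem~\ref{thm3.1t} argument in the $y$-variable, and passing to the boundary point closest to $y$, produces the stated inequality with any $\alpha<\alpha_0$.

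The principal technical difficulty is the verification, at each iteration, that the localized data $(\vec f,\vec g,h)$ satisfy the hypotheses of Theorem~\ref{thm3.1t}---most notably the solenoidal condition $\vec f\in H_{q/2}(\Omega)$ and the compatibility $\int_\Omega h=0$ under the cutoff---together with the careful bookkeeping of exponents needed to ensure that the two cutoff passes compound into the product weight $d_x^\alpha d_y^\alpha$ and the sharp exponent $1+2\alpha$ on $\abs{x-y}^{-1}$, rather than a weaker dependence.
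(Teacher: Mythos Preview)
Your approach coincides with the paper's: the first part is handled exactly as in Theorem~\ref{thm5.6gr} via \cite[Theorem~4.1]{HK07}, and the global bound is obtained by feeding Theorem~\ref{thm3.1t} into the abstract reduction of \cite[Theorem~3.13]{KK10}. The paper itself gives no further detail than this citation, so your proposal is on target.

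One point in your sketch of the \cite{KK10} reduction deserves care. In the second pass you need $\abs{\vec G(x,w)}\leq Nd_x^{\alpha_0}\abs{x-w}^{-1-\alpha_0}$ on the cutoff annulus (which must be centered at the pole $x$, not at $y$). But the one-sided bound you established in the first pass was obtained only under the auxiliary hypothesis $d_y\geq\abs{x-y}$ (needed so that the annulus around $y$ lies in the interior, where \eqref{eq5.7gr} applies). On the second annulus both $d_x$ and $d_w$ can be small simultaneously, and then neither \eqref{eq5.7gr} nor your one-sided bound is available. The way \cite{KK10} resolves this is to first upgrade \eqref{eq5.7gr} to a \emph{global} bound $\abs{\vec G(x,y)}\leq N\abs{x-y}^{-1}$ valid for all $x,y\in\Omega$---this already uses Theorem~\ref{thm3.1t} once, via the same cutoff argument but estimating only $\abs{\vec u}_{0;\Omega}$ rather than $[\vec u]_{\alpha;\Omega}$---and only then to run the weighted iteration. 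With that global bound in hand, the data on every cutoff annulus is controlled unconditionally, and the two passes compound cleanly to give $d_x^\alpha d_y^\alpha\abs{x-y}^{-1-2\alpha}$.
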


\begin{remark}					\label{rmk6.7gr}
Theorem~\ref{thm5.6gr} in particular establishes the existence of the Green's function of the operator $L$ in $\bR^3$, which is usually referred to as the fundamental solution of the operator $L$.
Notice that in that case, we have the pointwise estimate \eqref{eq5.7gr} available for all $x, y\in \bR^3$ with $x\neq y$, and estimate \eqref{eq5.8gr} for all $x, x'$ satisfying $\abs{x-x'}<\abs{x-y}/2$.
The various estimates for the Green's function that appears in \cite[Theorem~4.1]{HK07} are also available in Theorem~\ref{thm5.6gr}.
\end{remark}

\section{Associated parabolic system}				\label{sec:p}
In this separate and independent section, we consider the system of equations
\begin{equation}					\label{eq4.1ax}
\vec u_t+\nabla\times (a(x)\nabla\times \vec u) - \nabla (b(x) \nabla\cdot \vec u)=\vec f \quad\text{in }\;\Omega\times (0,T),
\end{equation}
and prove that weak solutions of the system \eqref{eq4.1ax} are H\"older continuous in $\Omega\times (0,T)$ provided that $\vec f$ satisfies some suitable condition, which is an extension of \cite[Theorem~3.1]{KKM}, where it is shown that weak solutions of the following system are H\"older continuous:
\begin{equation}					\label{eq7.2ps}
\vec u_t+\nabla\times (a(x)\nabla\times \vec u)=0,\quad \nabla \cdot \vec u=0 \quad\text{in }\;\Omega\times (0,T).
\end{equation}
As mentioned in the introduction, the above system arises naturally from Maxwell's equations in a quasi-static electromagnetic field.
More precisely,  let $\sigma(x)$ denote the electrical conductivity of a material and the vector $\vec H(x,t)$ represent the magnetic field.
It is shown in Landau et al. \cite[Ch.~VII]{LLP} that in the quasi-static electromagnetic fields, $\vec H$ satisfies the equations
\[
\vec H_t+\nabla \times \left(\tfrac{1}{\sigma}\nabla\times \vec H\right)=0,\quad \nabla \cdot\vec H=0\quad\text{in }\;\Omega\times(0,T),
\]
which is a special case of the system \eqref{eq4.1ax}.
Also, in this section we study the Green's functions of the system \eqref{eq4.1ax} and the system \eqref{eq7.2ps}, by using recent results from \cite{CDK, CDK10}.

\subsection{Notation and definitions}
In this section, we abandon some notations introduced in Section~\ref{sec:main}. 
Instead, we follow the notations of Ladyzhenskaya et al. \cite{LSU} with a slight variation.
We denote by $Q_T$ the cylindrical domain $\Omega\times (0,T)$, where $T>0$ is a fixed but arbitrary number, and $S_T$ the lateral surface of $Q_T$; i.e., $S_T=\partial\Omega\times [0,T]$.
Parabolic function spaces such as $L_{q,r}(Q_T)$, $L_q(Q_T)$, $W^{1,0}_2(Q_T)$, $W^{1,1}_2(Q_T)$, $V_2(Q_T)$, and $V^{1,0}_2(Q_T)$ are exactly those defined in Ladyzhenskaya et al. \cite{LSU}.
We define the parabolic distance between the points $X=(x,t)$ and $Y=(y,s)$ by
\[
\abs{X-Y}_p:=\max(\abs{x-y}, \sqrt{\abs{t-s}})
\]
and define the parabolic H\"older norm as follows:
\[
\abs{u}_{\alpha/2,\alpha;Q}=[u]_{\alpha/2, \alpha;Q}+\abs{u}_{0;Q}: = \sup_{\substack{X, Y \in Q\\ X\neq Y}} \frac{\abs{u(X)-u(Y)}}{\abs{X-Y}_p^ \alpha} + \sup_{X\in Q}\,\abs{u(X)}.
\]
We write $\nabla u$ for the spatial gradient of $u$ and $u_t$ for its time derivative.
We define
\[
Q^{-}_r(X)= B_r(x)\times (t-r^2,t),\quad
Q_r(X)=B_r(x)\times (t-r^2,t+r^2).
\]
We denote by $\cL$ the operator $\partial_t+L$; i.e.,
\[
\cL \vec u:=\vec u_t+ L \vec u= \vec u_t+\nabla\times (a(x)\nabla\times \vec u) - \nabla (b(x) \nabla\cdot \vec u),
\]
and by $\cLt$ the adjoint operator $-\partial_t+L$.
For a cylinder $Q$ of the form $\Omega\times (a,b)$, where $-\infty\leq a<b\leq \infty$, we say that $\vec u$ is a weak solution in $V_2(Q)$ ($V^{1,0}_2(Q)$) of $\cL \vec u =\vec f$ if $\vec u \in V_2(Q)$ ($V^{1,0}_2(Q)$) and satisfies the identity
\[
-\int_{Q} \vec u \cdot \vec \phi_t+ \int_{Q} a (\nabla \times \vec u) \cdot (\nabla\times \vec \phi) + b (\nabla \cdot \vec u)(\nabla \cdot \vec \phi)= \int_{Q} \vec f \cdot \vec \phi, \quad \forall \vec \phi \in C^\infty_0(Q).
\]
Similarly, we say that $\vec u$ is a weak solution in $V_2(Q)$ ($V^{1,0}_2(Q)$) of $\cLt \vec u =\vec f$ if $\vec u \in V_2(Q)$ ($V^{1,0}_2(Q)$) and satisfies the identity
\[
\int_{Q} \vec u \cdot \vec \phi_t+ \int_{Q} a (\nabla \times \vec u) \cdot (\nabla\times \vec \phi) + b (\nabla \cdot \vec u)(\nabla \cdot \vec \phi)= \int_{Q} \vec f \cdot \vec \phi, \quad \forall \vec \phi \in C^\infty_0(Q).
\]
\subsection{H\"older continuity estimates}
The following theorem is a parabolic analogue of Theorem~\ref{thm3.2a}.
However, it should be clearly understood that in the theorem below, the coefficients $a$ and $b$ of the system \eqref{eq4.1ax} are assumed to be time-independent.
\begin{theorem}			\label{thm4.2b}
Let $Q_T=\Omega\times(0,T)$, where $\Omega$ be a domain in $\bR^3$.
Assume that $a(x)$ and $b(x)$ are measurable functions on $\Omega$ satisfying \eqref{eq0.2bh}.
Let $\vec u$ be a weak solution in $V_2(Q_T)$ of the system \eqref{eq4.1ax} with $\vec f \in L_q(Q_T)$ for some $q>5/2$.
Then $\vec u$ is H\"older continuous in $Q_T$, and for any $Q^{-}_R=Q^{-}_R(X_0) \subset\subset Q_T$, we have the following estimate for $\vec u$ in $Q^{-}_{R/2}$:
\begin{equation}			\label{eq4.3hi}
R^\alpha [\vec u]_{\alpha, \alpha/2;Q^{-}_{R/2}} + \abs{\vec u}_{0;Q^{-}_{R/2}} \leq N \left( R^{-5/2} \norm{\vec u}_{L_2(Q^{-}_R)}+ R^{2-5/q} \norm{\vec f}_{L_q(Q^{-}_R)}\right),
\end{equation}
where $\alpha=\alpha(\nu, q) \in (0,1)$ and $N=N(\nu,q)>0$.
\end{theorem}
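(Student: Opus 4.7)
The plan is to follow the strategy of the elliptic Theorem~\ref{thm3.2a} closely, with parabolic modifications throughout. First I would record a parabolic Caccioppoli inequality, namely that on parabolic cylinders $Q^{-}_{3r}(X_0)\subset\subset Q_T$ with $r=R/3$, one has
\[
\esssup_{t} \int_{B_{2r}} \abs{\vec u}^2(\cdot,t) + \int_{Q^{-}_{2r}} \abs{\nabla\times\vec u}^2 + \abs{\nabla\cdot \vec u}^2 \leq N\Bigl(r^{-2}\int_{Q^{-}_{3r}} \abs{\vec u}^2 + r^{2-4/\tilde q}\norm{\vec f}_{L_{\tilde q}(Q^{-}_{3r})}^2\Bigr),
\]
obtained by testing with $\eta^2 \vec u$ for a standard parabolic cut-off $\eta$ and applying Young's inequality. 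The qualitative smoothness of $\vec u$ is granted by the usual approximation scheme (smoothing $a,b,\vec f$), so that all manipulations below are justified.

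Next, set $\psi:=b(x)\nabla\cdot\vec u$ and $B:=1/b$. Taking the divergence of \eqref{eq4.1ax} (and using that $b$ is time-independent so $\partial_t(B\psi)=B\psi_t$) yields the uniformly parabolic scalar equation
\[
B\psi_t - \Delta\psi = \nabla\cdot\vec f
\]
in $\Omega\times(0,T)$, with $\nu\le B\le \nu^{-1}$. By standard parabolic regularity for divergence-form equations with bounded measurable coefficient $B$, $\psi$ enjoys $L_\infty$ and higher-integrability bounds, and moreover $\nabla\psi\in L_{q/2,\text{loc}}$ with quantitative estimates in terms of $\norm{\psi}_{L_2(Q^{-}_{5r/2})}$ and $\norm{\vec f}_{L_q}$. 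These play the role of the mean-value estimate \eqref{eq4.31se} in the elliptic proof.

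Then, exactly as in the elliptic case, I would decompose $\vec u(\cdot,t)=\vec v(\cdot,t)+\vec w(\cdot,t)$ on $B_r(x_0)$ for each $t$, where $\vec v(\cdot,t)$ is the Bogovski\u{\i} solution to $\nabla\cdot\vec v(\cdot,t)=B\psi(\cdot,t)-(B\psi)_{x_0,r}(t)$ in $B_r$ with zero boundary, so that $\norm{\nabla \vec v(\cdot,t)}_{L^p(B_r)}\le N\norm{\psi(\cdot,t)}_{L^p(B_r)}$ for $p\in(1,\infty)$. The key point is that $\nabla\cdot\vec w$ is spatially constant (depending only on $t$), so the identity $\nabla\times(\nabla\times\vec w)=-\Delta\vec w$ still holds, and $\vec w$ satisfies a parabolic system of exactly the form treated in Theorem~3.1 of \cite{KKM}, up to an inhomogeneous term $\vec F:=\vec f-\vec v_t+\nabla\psi-\nabla\times(a\nabla\times\vec v)$ and a harmless divergence datum. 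Applying that theorem (or its straightforward extension to accommodate an $L_q$ right-hand side, analogous to Remark~2.10) to $\vec w$, followed by Morrey's embedding for $\vec v(\cdot,t)$ and combining all estimates in the manner of \eqref{eq4.40zz}--\eqref{eq4.41yx}, delivers \eqref{eq4.3hi} (the $L^\infty$ piece being extracted from the Hölder seminorm estimate by the triangle-inequality averaging trick).

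The main obstacle will be controlling the time derivative $\vec v_t$ that inevitably appears in the PDE for $\vec w$. Formally differentiating the Bogovski\u{\i} problem in $t$ gives $\nabla\cdot\vec v_t=B\psi_t-(B\psi_t)_{x_0,r}$ with zero trace, and one can bound $\vec v_t$ in a suitable negative-order space by using the parabolic equation $B\psi_t=\Delta\psi+\nabla\cdot\vec f$ (tested against Bogovski\u{\i}-type functions), so that $\vec v_t$ only enters the estimates for $\vec w$ through the variational pairing with test functions, never through a pointwise estimate. Carrying out this pairing while keeping the right parabolic scaling is the delicate step; the analogous elliptic step was immediate because no time derivative was present. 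Once this control is obtained, one also needs to verify that Theorem~3.1 of \cite{KKM} is robust enough to absorb a right-hand side of the form $\vec F$ in the appropriate Morrey class $L^{p,\lambda}$ with parabolic scaling, which is implicit in the iteration underlying that theorem.
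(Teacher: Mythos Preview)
Your approach is genuinely different from the paper's, and the obstacle you flag---controlling $\vec v_t$ coming from the time-sliced Bogovski\u{\i} construction---is real and not cleanly resolved by your sketch. The paper sidesteps this difficulty entirely by reversing the order of operations.

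The paper's argument runs as follows. First, treat the \emph{homogeneous} parabolic system $\vec v_t + L\vec v = 0$. Because the coefficients $a,b$ are time-independent, $\vec v_t$ is again a solution, and one obtains Caccioppoli-type bounds on $\vec v_t$ and $\nabla\vec v_t$ (Lemma~\ref{lem4.5}). At each fixed $t$, $\vec v(\cdot,t)$ then satisfies the \emph{elliptic} system $L\vec v(\cdot,t) = -\vec v_t(\cdot,t)$ with a controlled right-hand side, and the already-proved elliptic Theorem~\ref{thm3.2a} yields spatial H\"older continuity; temporal H\"older continuity follows from the $\vec v_t$ estimates. This gives the homogeneous H\"older bound \eqref{eq4.10mb}. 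Second, for the inhomogeneous system $\vec u_t + L\vec u = \vec f$, decompose $\vec u = \vec v + \vec w$ on each $Q^-_r(X)$ where $\vec w$ solves the full inhomogeneous problem with zero data on the \emph{parabolic} boundary, and $\vec v$ is the homogeneous remainder. The energy estimate for $\vec w$ and the homogeneous H\"older estimate for $\vec v$ feed into the standard Campanato iteration (\eqref{eq7.32}--\eqref{eq10.28}).

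What your approach would require: to make the Bogovski\u{\i} route work you would need (i) the mapping property of the Bogovski\u{\i} operator from $W^{-1,p}$ to $L^p$ (so that $\vec v_t$, whose divergence is $B\psi_t=\Delta\psi+\nabla\cdot\vec f$, lands in $L^p$), and (ii) good $L^p$ control on $\nabla\psi$ from the scalar equation $B(x)\psi_t-\Delta\psi=\nabla\cdot\vec f$. Point (ii) is not automatic: this equation has a bounded measurable weight $B$ on the time derivative, and while De~Giorgi--Nash--Moser gives $\psi\in C^\mu$, higher $L^p$ gradient bounds are a separate matter---unlike the elliptic proof, where $\psi$ was harmonic and the mean-value property did all the work. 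The paper's route avoids both issues by never performing a spatial Bogovski\u{\i} split in the parabolic setting; the only decomposition is the standard homogeneous/inhomogeneous one on parabolic cylinders, where no stray time derivative appears.
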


The proof of the above theorem will be given in \S \ref{sec7.3p} below.

\begin{remark}
As in \cite[Theorem~3.2]{KKM}, one can consider the case when the coefficients $a$ and $b$ of the system \eqref{eq4.1ax} are time-dependent but still have some regularity in $t$-variable.
For a measurable function $f=f(X)=f(x,t)$, we set
\[
\omega_\delta(f):=\sup_{X=(t,x)\in\bR^4} \sup_{r\le \delta} \frac{1}{\abs{Q_r(X)}} \int_{t-r^2}^{t+r^2} \!\int_{B_r(x)} \abs{f(y,s)-\bar f_{t,r}(y)}\,dy\,ds, \quad\forall \delta>0,
\]
where $\bar f_{t,r}(y)=\fint_{t-r^2}^{t+r^2} f(y,s)\,ds$.
We say that $f$ belongs to $\VMO_t$ if $\lim_{\delta\to 0} \omega_\delta(f)=0$.
Assume that the coefficients $a(x,t)$ and $b(x,t)$ are defined in the entire space $\bR^4$ and belong to $\VMO_t$.
Let $\vec u\in V_2(Q_T)$ be a weak solution of the system
\[
\vec u_t+\nabla\times (a(x,t)\nabla\times \vec u) - \nabla (b(x,t) \nabla\cdot \vec u)=\vec f \quad\text{in }\;Q_T,
\]
where $\vec f \in L_q(Q_T)$ with $q>5/2$. Then one can show that $\vec u$ is H\"older continuous in $Q_T$.
The proof is very similar to that of \cite[Theorem~3.2]{KKM}.
Also, as is mentioned in Remark~\ref{rmk2.5rr}, one may assume that $a$ is a $3\times 3$ (possibly non-symmetric) matrix valued function satisfying the uniform ellipticity and boundedness condition; see \cite{KKM} and also consult \cite{Kim} for treatment of non-symmetric coefficients.
\end{remark}

\begin{remark}					\label{rmk4.5ff}
In Theorem~\ref{thm4.2b}, instead of assuming that $\vec f\in L_q(Q_T)$, one may assume that $\vec f$ belongs to the mixed norm space $L_{q,r}(Q_T)$ with suitable $q$ and $r$.
In fact, one may assume that $\vec f$ belongs to the Morrey space, $M^{10/7,10(3+2\delta)/7}$ with $\delta\in(0,1)$, where $M^{p,q}$ is the set of all functions $f\in L_p(Q_T)$ with finite norm (c.f. Lieberman \cite[\S VI.7]{Lieberman})
\[
\norm{u}_{M^{p,q}}=\sup_{Q^{-}_r(X_0)\subset Q_T} \left(r^{-q}\int_{Q^{-}_r(X_0)} \abs{u}^p\,\right)^{1/p}.
\]
Then, instead of the estimate \eqref{eq7.33} in the proof of Theorem~\ref{thm4.2b}, we would have
\[
\int_{Q^{-}_r(X)}\abs{\nabla\vec w}^2 \leq N\norm{\vec f}^2_{L_{10/7}(Q^{-}_r(X))} \leq N r^{3+2\delta} \norm{\vec f}^2_{M^{10/7,10(3+2\delta)/7}}.
\]
The rest of proof remains essentially the same.
\end{remark}

\subsection{Green's function}
Let  $U=\Omega\times \bR$ be an infinite cylinder in with the base $\Omega$ being a (possibly unbounded) domain in $\bR^3$ and let $\partial U$ be its (parabolic) boundary $\partial\Omega\times\bR$.
Let $\cS \subset \overline Q$ and $u$ be a $W^{1,0}_2(Q)$ function.
We say that $u$ vanishes (or write $u=0$) on $\cS$ if $u$ is a limit in $W^{1,0}_2(Q)$ of a sequence of functions in $C^\infty_0(\overline Q\setminus \cS)$.

\begin{definition}
We say that a $3\times 3$ matrix valued function $\vec G(X,Y)=\vec G(x,t,y,s)$, with entries $G_{ij} (X,Y)$ defined on the set $\bigset{(X,Y)\in U\times U: X\neq Y}$, is a Green's function of the operator $\cL$ in $U$ if it satisfies the following properties:
\begin{enumerate}[i)]
\item
$\vec G(\cdot,Y)\in W^{1,0}_{1,loc}(U)$ and $\cL \vec G(\cdot,Y) = \delta_Y I$ for all $Y\in U$, in the sense that for $k=1,2,3$, the following identity holds for all $\vec \phi \in C^\infty_0(U)$:
\[
\int_{U} -\vec G(\cdot,Y) \vec e_k \cdot \vec \phi_t+ a (\nabla\times \vec G(\cdot,y)\vec e_k)\cdot (\nabla \times \vec \phi) + b (\nabla \cdot \vec G(\cdot,y) \vec e_k)(\nabla\cdot \vec \phi)= \phi^k(Y),
\]
where $\vec e_k$ denotes the $k$-th unit column vector; i.e., $\vec e_1=(1,0,0)^T$, etc.
\item
$\vec G(\cdot,Y) \in V_2^{1,0}(U\setminus Q_r(Y))$ for all $Y\in U$ and $r>0$ and $\vec G(\cdot,Y)$ vanishes on $\partial U$.
\item
For any $\vec f\in C^\infty_0(U)$, the function $\vec u$ given by
\[
\vec u(X):=\int_U \vec G(Y,X) \vec f(Y)\,dY
\]
is a weak solution in $V^{1,0}_2(U)$ of $\cLt \vec u=\vec f$ and vanishes on $\partial U$.
\end{enumerate}
\end{definition}
We note that part iii) of the above definition gives the uniqueness of a Green's function; see \cite{CDK}.
We shall thus say that $\vec G(X,Y)$ is the Green's function of $\cL$ in $U$ if it satisfies the above properties.
By Theorem~\ref{thm4.2b} and \cite[Theorem~2.7]{CDK}, we have the following theorem:

\begin{theorem}					\label{thm1hk}
Let $ U=\Omega\times \bR$ be an infinite cylinder, where the base $\Omega$ is a (possibly unbounded) domain in $\bR^3$.
Then the Green's function $\vec G(X,Y)$ of $\cL$ exists in $U$ and satisfies 
\begin{equation}					\label{eq7.9cc}
\vec G(x,t,y,s)=\vec G(x,t-s,y,0);\quad \vec G(x,t,y,0)\equiv 0\;\;\text{for}\; t<0.
\end{equation}
For all $\vec f\in C^\infty_0(U)$, the function $\vec u$ given by
\begin{equation}					\label{eqn:E-70}
\vec u(X):=\int_{U} \vec G(X,Y)\vec f(Y)\,dY
\end{equation}
is a weak solution in $V^{1,0}_2(U)$ of $\cL\vec u=\vec f$ and vanishes on $\partial U$.
Moreover, for all $\vec g\in L^2(\Omega)$, the function $\vec u(x,t)$ defined by
\[
\vec u(x,t):=\int_{\Omega} \vec G(x,t,y,0)\vec g(y)\,dy
\]
is a unique weak solution in $V^{1,0}_2(Q_T)$ of the problem\footnote{See, Ladyzhenskaya et al. \cite[\S III.1]{LSU}}
\[
\cL \vec u =0,\quad \vec u \big|_{S_T}=0,\quad \vec u \big|_{t=0}=\vec g(x),
\]
and if $\vec g$ is continuous at $x_0\in\Omega$ in addition, then we have
\[
\lim_{\substack{(x,t)\to (x_0,0)\\ x\in\Omega,\,t>0}} \vec u(x,t)=\vec g(x_0).
\]
\end{theorem}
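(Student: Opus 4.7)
The overall strategy is to apply the abstract Green's function construction in \cite[Theorem~2.7]{CDK}, which produces a Green's function for a parabolic operator in divergence form as soon as weak solutions of the operator and of its adjoint satisfy a local H\"older estimate of the form \eqref{eq4.3hi}. Thus the first step is to verify the hypotheses of that general result. Since the coefficients $a(x)$ and $b(x)$ are independent of $t$, the adjoint operator $\cLt=-\partial_t+L$ is transformed into $\partial_\tau+L$ under the time reversal $\tau=-t$; hence Theorem~\ref{thm4.2b} supplies the required local H\"older estimate for both $\cL$ and $\cLt$ on interior cylinders, with the same constants. The abstract framework then yields existence of $\vec G(X,Y)$ satisfying i)--iii) of the definition, the identity \eqref{eq7.9cc}, and, for $\vec f\in C^\infty_0(U)$, the fact that the function in \eqref{eqn:E-70} is a weak solution in $V^{1,0}_2(U)$ of $\cL\vec u=\vec f$ vanishing on $\partial U$.

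Next I would establish the two features that are not quite ``turnkey'' from the general theory. For time translation invariance, I would observe that, for any $\tau\in\bR$, the function $(X,Y)\mapsto \vec G(x,t+\tau,y,s+\tau)$ also satisfies properties i)--iii) of the Green's function in $U$; by the uniqueness in \cite{CDK} it must agree with $\vec G(X,Y)$, which gives $\vec G(x,t,y,s)=\vec G(x,t-s,y,0)$. Causality $\vec G(x,t,y,0)\equiv 0$ for $t<0$ is then built into the CDK construction: the approximating Green's functions are obtained via mollification of $\delta_Y$ and energy estimates applied backward in time force the solution to vanish on $\{t<s\}$.

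For the initial-value problem, I would define $\vec u(x,t)=\int_\Omega \vec G(x,t,y,0)\vec g(y)\,dy$ and check that it is the unique weak solution of $\cL\vec u=0$ with $\vec u|_{S_T}=0$ and $\vec u|_{t=0}=\vec g$ in $V^{1,0}_2(Q_T)$. Existence of such a solution is classical (\cite[\S III.4]{LSU}), and its representation in terms of $\vec G$ follows from a duality computation: test the equation for $\vec u$ against $\vec G(\cdot,\cdot,x,t)^T\vec e_k$ (which, by the symmetry between $\cL$ and $\cLt$ contained in the definition of the Green's function, satisfies $\cLt$-equation) and integrate by parts on $\Omega\times(0,t)$, producing the initial trace term $\int_\Omega \vec G(x,t,y,0)\vec g(y)\,dy$. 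Uniqueness follows from the standard energy identity in $V^{1,0}_2$. The continuity statement
\[
\lim_{\substack{(x,t)\to(x_0,0)\\ x\in\Omega,\,t>0}}\vec u(x,t)=\vec g(x_0)
\]
at a point where $\vec g$ is continuous is proved by splitting $\vec g=\vec g(x_0)\eta+(\vec g-\vec g(x_0)\eta)$ with a cutoff $\eta$ supported near $x_0$; the first piece tends to $\vec g(x_0)$ by known properties of the heat-type semigroup (the Gaussian type bound obtained from Theorem~\ref{thm4.2b} in the same spirit as Theorem~\ref{thm2hk}), while the tail piece is small uniformly thanks to the pointwise upper bound on $\vec G$ away from the diagonal.

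The main obstacle I anticipate is not the application of the CDK framework, which is essentially mechanical, but the last point: extracting enough quantitative control on $\vec G(x,t,y,0)$ near $t=0$ to justify the continuity of $\vec u$ at a point of continuity of $\vec g$. The right tool is a Nash-type upper Gaussian bound, which follows from Theorem~\ref{thm4.2b} combined with the $L^2$ energy identity for $\vec G$ through the standard Davies--Aronson iteration; once that bound is in place, the approximation argument sketched above goes through in routine fashion.
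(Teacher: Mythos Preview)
Your proposal is correct and follows essentially the same route as the paper: the paper's entire proof is the single sentence ``By Theorem~\ref{thm4.2b} and \cite[Theorem~2.7]{CDK}, we have the following theorem,'' together with the observation in Remark~\ref{rmk7.11hk} that time-translation invariance \eqref{eq7.9cc} comes from the coefficients being time-independent (citing \cite{DK09}). You have simply unpacked what the CDK black box does---the adjoint via time reversal, causality from the construction, the initial-value representation and continuity at a point of $\vec g$---and the ``obstacle'' you anticipate (a quantitative bound on $\vec G$ near $t=0$) is already absorbed into \cite[Theorem~2.7]{CDK}, so no additional work beyond invoking that reference is needed.
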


\begin{remark}					\label{rmk7.11hk}
The identity $\vec G(x,t,y,s)=\vec G(x,t-s,y,0)$ in Theorem~\ref{thm1hk} comes from the fact that $\cL$ has time-independent coefficients; see \cite{DK09}.
The function $\vec K_t(x,y)$ defined by
\begin{equation}					\label{eq7.12qm}
\vec K_t(x,y)=\vec G(x,t,y,0),\quad x,y\in\Omega,\;\; t>0
\end{equation}
is usually called \textit{the (Dirichlet) heat kernel} of the elliptic operator $L$ in $\Omega$.
It is known that $\vec K_t$ satisfies the semi-group property
\[
\vec K_{t+s}(x,y)=\int_\Omega \vec K_t(x,z) \vec K_s(z,y)\,dz, \quad\forall x,y\in\Omega,\;\;\forall t,s>0,
\]
and in particular, if $\Omega=\bR^3$, then we also have the following identity:
\[
\int_{\bR^3} \vec K_t(x,y)\,dy=I, \quad\forall x\in\bR^3,\;\;\forall t>0,
\]
where $I$ denotes the $3\times 3$ identity matrix; see \cite[Theorem~2.11 and Remark~2.12]{CDK}.
\end{remark}

The following theorem is another consequence of Theorem~\ref{thm4.2b}; see \cite[Theorem~2.11]{CDK}.

\begin{theorem}					\label{thm2hk}
Let $\vec K_t(x,y)$ be the heat kernel for the operator $L$ in $\bR^3$ as constructed in Theorem~\ref{thm1hk}.
Then we have the following Gaussian bound for the heat kernel:
\[
\abs{\vec K_t(x,y)} \leq N t^{-3/2}\exp\{-\kappa|x-y|^2/ t \},\quad \forall t>0,\;\; x,y\in\bR^3,
\]
where $N=N(\nu)>0$ and $\kappa=\kappa(\nu)>0$.
\end{theorem}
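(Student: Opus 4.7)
The heat kernel in $\bR^3$ already exists by Theorem~\ref{thm1hk}, and the identity $\int_{\bR^3}\vec K_t(x,y)\,dy = I$ together with symmetry of the bilinear form (so that $L^*=L$ and $\cLt$, $\cL$ play symmetric roles) are recorded in Remark~\ref{rmk7.11hk}. The plan is therefore to verify the hypotheses of the Davies-type framework [CDK, Theorem~2.11] by first proving the on-diagonal bound $|\vec K_t(x,y)|\le N t^{-3/2}$, then reproving it for a one-parameter family of exponentially conjugated semigroups, and finally optimizing over the parameter.

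For the on-diagonal bound I would apply Theorem~\ref{thm4.2b} directly. Fixing $y\in\bR^3$ and $k\in\{1,2,3\}$, the function $\vec u(x,\tau):=\vec K_\tau(x,y)\vec e_k$ is a weak solution of $\cL\vec u=0$ on $\bR^3\times(t/2,t)$. Applying \eqref{eq4.3hi} with $\vec f=0$ on $Q^-_{\sqrt{t}/2}(x,t)$ gives
\[
|\vec K_t(x,y)\vec e_k|^2 \le N t^{-5/2}\int_{t/4}^{t}\!\!\int_{B_{\sqrt{t}/2}(x)} |\vec K_\tau(z,y)|^2\,dz\,d\tau.
\]
The right-hand $L^2$ mass is handled by a duality-plus-composition step: the $L^2\to L^\infty$ bound obtained this way dualizes (using $L^*=L$) to an $L^1\to L^2$ estimate $\norm{\vec K_t(\cdot,y)}_{L^2}\le N t^{-3/4}$, which feeds back into the display above to yield $|\vec K_t(x,y)|\le N t^{-3/2}$.

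For the Gaussian decay I would invoke Davies' exponential perturbation. For a bounded smooth $\psi:\bR^3\to\bR$ with $|\nabla\psi|\le 1$ and a parameter $\lambda>0$, set
\[
\vec K^{\lambda\psi}_t(x,y):=e^{\lambda\psi(x)}\vec K_t(x,y)e^{-\lambda\psi(y)},
\]
which is the heat kernel of the conjugated operator with bilinear form $B^{\lambda\psi}(\vec u,\vec v):=B(e^{-\lambda\psi}\vec u,\,e^{\lambda\psi}\vec v)$. A direct commutator computation, together with the uniform bounds on $a,b$, produces the G{\aa}rding-type estimate
\[
\R B^{\lambda\psi}(\vec u,\vec u) \;\geq\; \tfrac{\nu}{2}\int_{\bR^3}\!\left(|\nabla\times\vec u|^2+|\nabla\cdot\vec u|^2\right)\,dx \;-\; c\lambda^2\int_{\bR^3}|\vec u|^2\,dx,
\]
with $c=c(\nu)$. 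Repeating the on-diagonal argument of the previous paragraph applied to $e^{-c\lambda^2 t}\vec K^{\lambda\psi}_t$ yields $|\vec K^{\lambda\psi}_t(x,y)|\le N t^{-3/2}e^{c\lambda^2 t}$. Undoing the conjugation, specializing $\psi$ to a Lipschitz regularization of $z\mapsto |z-y|$ so that $\psi(x)-\psi(y)\ge |x-y|$, and optimizing by $\lambda=|x-y|/(2ct)$ delivers the claimed bound with $\kappa=1/(4c)$.

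The main technical obstacle is the G{\aa}rding inequality for $B^{\lambda\psi}$. Unlike the scalar divergence-form case, the conjugation produces vector-valued commutators for both $\nabla\times$ and $\nabla\cdot$; the first-order (in $\lambda$) cross terms couple $\nabla\times\vec u$ and $\nabla\cdot\vec u$ to $\vec u$ with weights involving $\nabla\psi$ and the coefficients $a,b$. The key point is that the unperturbed $B$ coercively controls \emph{both} $\norm{\nabla\times\vec u}_{L^2}$ and $\norm{\nabla\cdot\vec u}_{L^2}$---this is where the lower bounds $a,b\ge\nu$ are essential---so a weighted Cauchy-Schwarz absorbs the cross terms at the cost of the $c\lambda^2\norm{\vec u}_{L^2}^2$ correction. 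Once this coercivity is established, the rest of the proof is standard semigroup technology as packaged in [CDK].
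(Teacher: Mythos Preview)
The paper gives no self-contained proof of this theorem; it states only that the result is ``another consequence of Theorem~\ref{thm4.2b}; see \cite[Theorem~2.11]{CDK}.'' In other words, Theorem~\ref{thm4.2b} supplies the local H\"older (local boundedness) property that serves as the hypothesis of the abstract Gaussian-bound machinery in \cite{CDK}, and the Davies-type argument you outline is precisely what lives inside that black box. Your route therefore coincides with the paper's.

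One step deserves a sharper formulation. When you write ``repeating the on-diagonal argument of the previous paragraph applied to $e^{-c\lambda^2 t}\vec K^{\lambda\psi}_t$,'' note that Theorem~\ref{thm4.2b} is proved for the specific operator $\cL=\partial_t+L$; the conjugated operator $\cL^{\lambda\psi}$ carries extra first- and zeroth-order terms, so \eqref{eq4.3hi} does not apply to $\vec K^{\lambda\psi}_t$ directly. The standard remedy (used in \cite{CDK}, in the spirit of Aronson and Fabes--Stroock) is to apply the local-boundedness estimate \eqref{eq4.3hi} to the \emph{unperturbed} kernel $\vec K_\tau(\cdot,y)$ on $Q^-_{\sqrt t/2}(x,t)$ and to introduce the exponential weight only at the $L^2$ level, where your G{\aa}rding inequality governs its growth. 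Since $\abs{\nabla\psi}\le 1$, the weight $e^{\lambda\psi}$ varies by at most $e^{\lambda\sqrt t/2}$ across $B_{\sqrt t/2}(x)$ and can be pulled out of the local integral at the cost of a factor absorbed into $e^{c\lambda^2 t}$; the G{\aa}rding estimate then controls $\int_{\bR^3} e^{2\lambda\psi(z)}\abs{\vec K_\tau(z,y)}^2\,dz$, and a symmetric application of \eqref{eq4.3hi} in the $y$-variable (for $\cLt$) closes the argument. With this adjustment your proof is complete. Your G{\aa}rding computation is correct --- in fact the first-order cross terms are purely imaginary, so one may keep the full constant $\nu$ rather than $\nu/2$.
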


Next, we consider the Green's functions of the system \eqref{eq7.2ps}.
\begin{definition}					\label{def3ks}
We say that a $3\times 3$ matrix valued function $\vec G(X,Y)=\vec G(x,t,y,s)$, with entries $G_{ij} (X,Y)$ defined on the set $\bigset{(X,Y)\in U\times U: X\neq Y}$, is a Green's function of the system \eqref{eq7.2ps} in $U$ if it satisfies the following properties:
\begin{enumerate}[i)]
\item
$\vec G(\cdot,Y)\in W^{1,0}_{1,loc}(U)$ for all $Y\in U$ and for $k=1,2, 3$, we have
\begin{align*}
\int_{U} -\vec G(\cdot,Y) \vec e_k \cdot \vec \phi_t+ a (\nabla\times \vec G(\cdot,y)\vec e_k)\cdot (\nabla \times \vec \phi) &= \phi^k(Y), \quad \forall \vec \phi \in C^\infty_0(U),\\
\int_U \vec G(\cdot,Y)\vec e_k \cdot \nabla \psi &= 0, \quad \forall \psi \in C^\infty_0(U),
\end{align*}
where $\vec e_k$ denotes the $k$-th unit column vector; i.e., $\vec e_1=(1,0,0)^T$, etc.
\item
$\vec G(\cdot,Y) \in V_2^{1,0}(U\setminus Q_r(Y))$ for all $Y\in U$ and $r>0$ and $\vec G(\cdot,Y)$ vanishes on $\partial U$.
\item
For any $\vec f\in C^\infty_0(U)$ satisfying $\nabla\cdot \vec f =0$ in $U$, the function $\vec u$ defined by
\[
\vec u(X):=\int_\Omega \vec G(Y,X) \vec f(Y)\,dY
\]
is a weak solution in $V^{1,0}_2(U)$ of the problem
\[
-\vec u_t+\nabla\times (a(x)\nabla\times \vec u)=\vec f,\quad \nabla \cdot \vec u=0,\quad \vec u\big|_{\partial U}=0,
\]
that is, $\vec u$ belongs to $V^{1,0}_2(U)$, vanishes on $\partial U$, and satisfies the above system in the sense of the following identities:
\begin{align*}
\int_{U} \vec u \cdot \vec \phi_t+ a (\nabla \times \vec u) \cdot (\nabla\times \vec \phi)& = \int_{U} \vec f \cdot \vec \phi, \quad \forall \vec \phi \in C^\infty_0(U).\\
\int_\Omega \vec u \cdot \nabla \psi &=0,\quad \forall \psi\in C^\infty_0(U).
\end{align*}
\end{enumerate}
\end{definition}

It can be easily seen that existence of the Green's function of the system \eqref{eq7.2ps} in $U$ follows from  \cite[Theorem~3.1]{KKM} and \cite[Theorem~2.7]{CDK}, and that it satisfies the relations \eqref{eq7.9cc} in Theorem~\ref{thm1hk}.
We shall say that $\vec K_t$ defined by the formula \eqref{eq7.12qm} is \emph{the (Dirichlet) heat kernel} of the elliptic system \eqref{eq0.1aa} in $\Omega$.
Then it satisfies the statement in Remark~\ref{rmk7.11hk} as well as that in Theorem~\ref{thm2hk}.
If we assume further that $\Omega$ is a domain satisfying the hypothesis of Theorem~3.5, then we have the following result, which is an easy consequence of \cite[Theorem~3.6]{CDK10} combined with Theorem~\ref{thm3.1t} and \cite[Lemma~4.4]{DK09} (see also \cite [Remark~3.10]{CDK10}):

\begin{theorem}			\label{thm3hk}
Let $ U=\Omega\times \bR$ with $\Omega$ satisfying the hypothesis of Theorem~\ref{thm3.1t}.
Then the heat kernel $\vec K_t(x,y)$ of the system \eqref{eq0.1aa} exists in $\Omega$.
Moreover, for all $T>0$ there exists a constant $N=N(\nu,\Omega,T)$ such that for all $x,y \in \Omega$ and $0<t \leq T$, we have
\[
\abs{\vec K_t(x,y)} \leq N \left(1 \wedge \frac {d_x} {\sqrt {t} \vee \abs{x-y}} \right)^{\alpha} \left(1 \wedge \frac{d_y} {\sqrt {t} \vee \abs{x-y}}\right)^{\alpha}\,t^{-3/2}\exp \set{-\kappa \abs{x-y}^2/t},
\]
where $\kappa=\kappa(\nu,\Omega)>0$ and $\alpha=\alpha(\nu,\Omega) \in (0,1)$ are constants independent of $T$, and we used the notation $a\wedge b=\min(a,b)$, $a\vee b=\max(a,b)$, and $d_x=\dist(x,\partial\Omega)$.
\end{theorem}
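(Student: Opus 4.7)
The plan is to invoke the abstract transfer theorem \cite[Theorem~3.6]{CDK10}, which under a boundary H\"older regularity hypothesis for weak solutions of the elliptic and parabolic systems with zero Dirichlet data yields exactly the claimed Gaussian upper bound with the boundary-decay factor $\bigl(1\wedge d_x/(\sqrt{t}\vee\abs{x-y})\bigr)^{\alpha}\bigl(1\wedge d_y/(\sqrt{t}\vee\abs{x-y})\bigr)^{\alpha}$. Remark~3.10 of \cite{CDK10} records that, once the boundary regularity input has constants independent of the time horizon $T$, the exponent $\alpha$ and the Gaussian rate $\kappa$ in the output may also be chosen independently of $T$.

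First, existence of the heat kernel $\vec K_t(x,y)$ in $\Omega$ for the system \eqref{eq0.1aa} follows by the interior H\"older estimate for the parabolic version of \eqref{eq0.1aa} (proved by the same argument as Theorem~\ref{thm4.2b}, via \cite[Theorem~3.1]{KKM}) combined with \cite[Theorem~2.7]{CDK}; this is the content of the discussion preceding the theorem, and it also produces the interior Gaussian bound of Theorem~\ref{thm2hk}. What is still missing is the boundary decay.

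The boundary H\"older hypothesis of \cite[Theorem~3.6]{CDK10} is furnished by Theorem~\ref{thm3.1t}: for $\Omega$ a bounded Lipschitz domain satisfying \eqref{eq2.5ef}, weak solutions $\vec u \in W^{1,2}_0(\Omega)$ of \eqref{eq0.1aa} with sufficiently integrable data are uniformly H\"older continuous on $\overline\Omega$ with exponent $\alpha=\alpha(\nu,q,\Omega)$. Since the coefficient $a$ is time-independent, \cite[Lemma~4.4]{DK09} then serves as the elliptic-to-parabolic bridge: it promotes the uniform elliptic boundary H\"older estimate into the corresponding parabolic boundary H\"older estimate for weak solutions of the adjoint parabolic system vanishing on the lateral surface $\partial\Omega\times\bR$, with the same exponent (up to passing to any slightly smaller one). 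Applied to $\vec K_t(\cdot,y)$, regarded as a weak solution of the adjoint system vanishing on $\partial\Omega\times\bR$ away from $(y,0)$, \cite[Theorem~3.6]{CDK10} then yields the desired estimate.

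The main obstacle is the technical bookkeeping in matching hypotheses. Specifically, the boundary H\"older input required by \cite[Theorem~3.6]{CDK10} must be applied to localized and rescaled cut-offs of $\vec K_t(\cdot,y)$ near boundary points, and the solenoidal constraint $\nabla\cdot\vec u=0$ together with the divergence-type terms produced by such cut-offs must be handled so that Theorem~\ref{thm3.1t} still applies. This is done via Lemma~\ref{lem:1} and a Bogovski\v{\i}-type divergence correction, exactly as was used in the proof of Theorem~\ref{thm3.2a}; the resulting corrections are absorbed into the inhomogeneous terms $\vec f$, $\vec g$, $h$ of \eqref{eq0.3cc} in a form to which Theorem~\ref{thm3.1t} applies. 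Once these routine but delicate reductions are carried out, the conclusion of \cite[Theorem~3.6]{CDK10} together with Remark~3.10 of \cite{CDK10} delivers the stated Gaussian bound with $\kappa$ and $\alpha$ depending only on $\nu$ and $\Omega$, and in particular independent of $T$.
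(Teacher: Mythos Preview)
Your proposal is correct and follows exactly the route the paper takes: in the sentence immediately preceding the theorem, the paper states that the result is an easy consequence of \cite[Theorem~3.6]{CDK10} combined with Theorem~\ref{thm3.1t} and \cite[Lemma~4.4]{DK09}, together with \cite[Remark~3.10]{CDK10} for the $T$-independence of $\alpha$ and $\kappa$. Your final paragraph on localization and Bogovski\v{\i}-type corrections is more cautious than the paper's terse ``easy consequence'', but the concern is not unreasonable and does not diverge from the intended argument.
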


\subsection{Proof of Theorem~\ref{thm4.2b}}				\label{sec7.3p}
We follow the strategy used in \cite{KKM}.
As before, we shall make the qualitative assumption that the weak solution $\vec u$ is smooth in $Q_T$.
Let us first assume that $\vec f =0$ and consider the homogeneous system
\begin{equation}					\label{eq4.4dp}
\vec u_t+L \vec u :=\vec u_t+\nabla\times (a(x)\nabla\times \vec u) - \nabla (b(x) \nabla\cdot \vec u)=0 \quad\text{in }\; Q_T.
\end{equation}
The proof of the following lemma is very similar to that of \cite[Lemma~3.1 -- 3.3]{KKM}, where we strongly used the assumption that coefficients of the operator are time-independent.
\begin{lemma}					\label{lem4.5}
Let $\vec v\in V_2(Q^{-}_{\lambda r})$, where $Q^{-}_{\lambda r}=Q^{-}_{\lambda r}(X_0)$ with $\lambda>1$, be a weak solution of $\vec v_t+L\vec v =0$ in $Q^{-}_{\lambda r}$.
Then we have the following estimates:
\begin{align*}
\sup_{t_0-r^2\leq t \leq t_0}\int_{B_r}\abs{\vec v(\cdot,t)}^2 + \int_{Q^{-}_r} \abs{\nabla \vec v}^2 &\leq N r^{-2} \int_{Q^{-}_{\lambda r}}\abs{\vec v}^2,\\
\sup_{t_0-r^2\leq t \leq t_0}\int_{B_r}\abs{\nabla \vec v(\cdot,t)}^2 + \int_{Q^{-}_r} \abs{\vec v_t}^2 & \leq N r^{-4} \int_{Q^{-}_{\lambda r}}\abs{\vec v}^2,\\
\sup_{t_0-r^2\leq t \leq t_0}\int_{B_r}\abs{\vec v_t(\cdot,t)}^2 + \int_{Q^{-}_r} \abs{\nabla \vec v_t}^2 & \leq N r^{-6} \int_{Q^{-}_{\lambda r}}\abs{\vec v}^2.
\end{align*}
where $N=N(\nu, \lambda)>0$.
\end{lemma}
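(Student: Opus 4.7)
The proof rests on three ingredients: a standard parabolic Caccioppoli inequality; the fact that the coefficients $a,b$ are time-independent, which gives the pointwise identity $a(\nabla\times\vec v)\cdot(\nabla\times\vec v_t)+b(\nabla\cdot\vec v)(\nabla\cdot\vec v_t) = \tfrac12\partial_t\bigl[a|\nabla\times\vec v|^2+b|\nabla\cdot\vec v|^2\bigr]$ and also guarantees that $\vec w:=\vec v_t$ is itself a weak solution of the same homogeneous system $\vec w_t+L\vec w=0$; and the vector identity $\int|\nabla \vec w|^2 = \int\bigl(|\nabla\times\vec w|^2+|\nabla\cdot\vec w|^2\bigr)$ for any compactly supported smooth vector field $\vec w$ (from $-\Delta=\nabla\times\nabla\times-\nabla\nabla\cdot$ after integration by parts), which will let us pass from the natural curl/divergence energy to the full gradient. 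The three estimates will be proved in sequence on nested cylinders $Q^{-}_r\subset Q^{-}_{r_1}\subset\cdots\subset Q^{-}_{\lambda r}$.

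\textbf{Estimate~(1).} Pick $r<r_1<\lambda r$ and a product cutoff $\eta(x,t)=\phi(x)\psi(t)\in C^\infty$ with $\phi$ supported in $B_{r_1}(x_0)$, $\psi(t_0-r_1^2)=0$, $\eta\equiv 1$ on $Q^{-}_r(X_0)$, and $|\nabla\phi|+\sqrt{|\psi'|}\le C/(r_1-r)$. Testing the equation against $\eta^2\vec v$, integrating by parts in the spatial curl/divergence terms, and applying Young's inequality yields, for every $t^*\le t_0$,
\[
\int_{B_{r_1}}\!\eta^2(\cdot,t^*)|\vec v(\cdot,t^*)|^2 + \int_{Q^{-}_{r_1}}\!\eta^2\bigl(a|\nabla\times\vec v|^2+b|\nabla\cdot\vec v|^2\bigr) \le N\!\int_{Q^{-}_{r_1}}\bigl(|\nabla\phi|^2+|\eta\psi'|\bigr)|\vec v|^2.
\]
Applying the vector identity to the compactly supported field $\eta\vec v(\cdot,t)$ and absorbing the resulting lower-order cross-terms by Young's inequality upgrades the left-hand side to control $\sup_{t^*}\!\int_{B_{r_1}}\!\eta^2|\vec v|^2+\int_{Q^{-}_{r_1}}\!\eta^2|\nabla\vec v|^2$. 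Choosing $r_1=(1+\lambda)r/2$ delivers estimate~(1).

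\textbf{Estimate~(2).} Take nested radii $r<r_1<r_2<\lambda r$ and a product cutoff $\eta$ of the same form, supported in $Q^{-}_{r_2}$ and identically $1$ on $Q^{-}_{r_1}$. Test the equation against $\eta^2\vec v_t$; because the coefficients do not depend on $t$, the principal part rearranges to
\[
\int\eta^2|\vec v_t|^2 + \tfrac12\partial_t\!\int\eta^2\bigl(a|\nabla\times\vec v|^2+b|\nabla\cdot\vec v|^2\bigr) = \tfrac12\!\int(\eta^2)_t\bigl(a|\nabla\times\vec v|^2+b|\nabla\cdot\vec v|^2\bigr) - R(t),
\]
where $R(t)$ collects the cross-terms arising from $\nabla\times(\eta^2\vec v_t)-\eta^2\nabla\times\vec v_t$ and the analogous divergence piece. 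Young's inequality absorbs half of $\int\eta^2|\vec v_t|^2$ from $R(t)$ and bounds the remainder by $N(r_2-r_1)^{-2}\!\int|\nabla\vec v|^2$. Integrating in $t$ from $t_0-r_2^2$ up to an arbitrary $t^*\in(t_0-r_1^2,t_0]$ produces simultaneously $\int_{Q^{-}_{r_1}}|\vec v_t|^2$ and $\sup_{t^*}\!\int_{B_{r_1}}(a|\nabla\times\vec v|^2+b|\nabla\cdot\vec v|^2)(\cdot,t^*)$ on the left. Invoking estimate~(1) on $Q^{-}_{r_2}\subset\subset Q^{-}_{\lambda r}$ to bound $\int_{Q^{-}_{r_2}}|\nabla\vec v|^2\le N r^{-2}\!\int_{Q^{-}_{\lambda r}}|\vec v|^2$, and once more using the vector identity to convert the pointwise-in-$t$ curl/divergence energy into $|\nabla\vec v|^2$, yields estimate~(2).

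\textbf{Estimate~(3) and the main obstacle.} Since $a,b$ are $t$-independent and $\vec v$ is assumed smooth, $\vec w=\vec v_t$ is a weak solution of $\vec w_t+L\vec w=0$ in $Q^{-}_{\lambda r}$. Fix an intermediate radius $r<r_3<\lambda r$; applying estimate~(1) to $\vec w$ on $Q^{-}_r$ with reference cylinder $Q^{-}_{r_3}$, and then estimate~(2) to $\vec v$ on $Q^{-}_{r_3}$ with reference $Q^{-}_{\lambda r}$, chains into
\[
\sup_{t}\!\int_{B_r}|\vec v_t|^2+\int_{Q^{-}_r}|\nabla\vec v_t|^2 \le N r^{-2}\!\int_{Q^{-}_{r_3}}|\vec v_t|^2 \le N r^{-6}\!\int_{Q^{-}_{\lambda r}}|\vec v|^2,
\]
which is estimate~(3). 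The genuinely delicate step is~(2): the temporal component of $\eta$ must be tuned so that the $\partial_t$-integration by parts yields both the dissipation term $\int|\vec v_t|^2$ and the energy supremum $\sup_t\!\int|\nabla\vec v|^2$ simultaneously, with every error term produced by $\nabla\eta$ and $\eta_t$ absorbable either by Young's inequality (against $\int\eta^2|\vec v_t|^2$) or by the Caccioppoli bound from~(1). The time-independence of $a,b$ is essential precisely at this point, as is the compact-support gradient recovery identity.
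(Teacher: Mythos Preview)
Your proof is correct and follows essentially the same approach indicated by the paper, which defers to \cite[Lemmas~3.1--3.3]{KKM}: a Caccioppoli inequality for~(1), testing against $\eta^2\vec v_t$ and exploiting the time-independence of $a,b$ for~(2), and observing that $\vec v_t$ solves the same homogeneous system for~(3), all chained through nested cylinders. Your explicit use of the compactly supported identity $\int|\nabla(\eta\vec v)|^2=\int|\nabla\times(\eta\vec v)|^2+|\nabla\cdot(\eta\vec v)|^2$ to pass from the curl/divergence energy to the full gradient, with the $|\nabla\eta|^2|\vec v|^2$ cross-terms absorbed by the already-established $\sup_t$ bound on $\int|\vec v|^2$, is exactly the mechanism that makes this work for the operator $L$ and matches the approach in the cited reference.
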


The proofs of following lemmas are also standard in parabolic theory and shall be omitted;  see e.g., \cite[Lemma~2.4 and~3.1]{CDK} and also \cite[Lemma~8.6]{CDK10}.
\begin{lemma}					\label{lem4.6}
Let $\vec u\in V_2(Q^{-}_r)$, where $Q^{-}_r=Q^{-}_r(X_0)$, be a weak solution of $\vec u_t+L \vec u= \vec f$ in $Q^{-}_r$.
Then we have the estimate
\[
\int_{Q^{-}_r} \abs{\vec u - \vec u_{X_0,r}}^2  \leq N \left(r^2 \int_{Q^{-}_r} \abs{\nabla \vec u}^2 + r^{-1}\norm{\vec f}_{L_1(Q^{-}_r)}^2\right);\quad \vec u_{X_0,r} = \fint_{Q^{-}_r(X_0)} \vec u.
\]
where $N=N(\nu)>0$.
\end{lemma}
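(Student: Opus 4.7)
The plan is to follow the standard parabolic Poincar\'e strategy: introduce a purely time-dependent intermediate quantity $m(t)$, bound $\vec u$ around it by the spatial Poincar\'e inequality applied slice by slice, and use the equation only to control the oscillation of $m(t)$ in $t$. Fix $\eta\in C^\infty_0(B_r)$ with $\int\eta\,dx=1$, $\abs{\eta}\le Nr^{-3}$, and $\abs{\nabla\eta}\le Nr^{-4}$, and set
\[
\bar{\vec u}(t):=\fint_{B_r}\vec u(\cdot,t)\,dx,\qquad m(t):=\int_{B_r}\vec u(\cdot,t)\,\eta\,dx.
\]
Then the triangle inequality splits the integral of interest as
\[
\int_{Q^{-}_r}\abs{\vec u-\vec u_{X_0,r}}^2\le 3\int_{Q^{-}_r}\abs{\vec u-\bar{\vec u}(t)}^2+3\int_{Q^{-}_r}\abs{\bar{\vec u}(t)-m(t)}^2+3\int_{Q^{-}_r}\abs{m(t)-\vec u_{X_0,r}}^2.
\]

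The first integral is bounded by $Nr^2\int_{Q^{-}_r}\abs{\nabla\vec u}^2$ via the classical spatial Poincar\'e inequality at each $t$. Since $\bar{\vec u}(t)-m(t)$ is a pure function of $t$, Cauchy--Schwarz combined with $\norm{\eta}_{L^2(B_r)}^2\le Nr^{-3}$ yields $\abs{\bar{\vec u}(t)-m(t)}^2\le Nr^{-3}\int_{B_r}\abs{\vec u-\bar{\vec u}(t)}^2\,dx$; integrating in $t$ and multiplying by $\abs{B_r}\sim r^3$ recovers the same bound $Nr^2\int_{Q^{-}_r}\abs{\nabla\vec u}^2$ for the second piece.

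The heart of the argument lies in the third piece, which requires controlling the time oscillation of $m(t)$ via the equation. Testing the weak formulation of $\vec u_t+L\vec u=\vec f$ with a mollification in $t$ of $\chi_{(t_1,t_2)}(t)\,\eta(x)\,\vec e_k$ (which is admissible since $\eta\,\vec e_k\in C^\infty_0(B_r)$), and using $\nabla\times(\eta\,\vec e_k)=\nabla\eta\times\vec e_k$ and $\nabla\cdot(\eta\,\vec e_k)=\partial_k\eta$, one obtains, for any $t_1<t_2$ in $(t_0-r^2,t_0)$,
\[
m(t_2)\cdot\vec e_k-m(t_1)\cdot\vec e_k=\int_{t_1}^{t_2}\!\!\int_{B_r}\bigl[\vec f\cdot\eta\,\vec e_k-a(\nabla\times\vec u)\cdot(\nabla\eta\times\vec e_k)-b(\nabla\cdot\vec u)\,\partial_k\eta\bigr]\,dx\,dt.
\]
Bounding $a,b$ by $\nu^{-1}$, inserting the pointwise bounds on $\eta$ and $\nabla\eta$, and applying Cauchy--Schwarz in space-time (using $\abs{Q^{-}_r}^{1/2}\sim r^{5/2}$) give
\[
\abs{m(t_2)-m(t_1)}\le Nr^{-3/2}\norm{\nabla\vec u}_{L_2(Q^{-}_r)}+Nr^{-3}\norm{\vec f}_{L_1(Q^{-}_r)},
\]
uniformly in $t_1,t_2$. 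Writing $\bar m:=\fint_{t_0-r^2}^{t_0}m(t)\,dt$, this controls $\abs{m(t)-\bar m}^2$, while $\bar m-\vec u_{X_0,r}=\fint_{t_0-r^2}^{t_0}(m(t)-\bar{\vec u}(t))\,dt$ is handled by Jensen together with the second-piece bound. Integrating in $(x,t)$ and multiplying by $\abs{Q^{-}_r}\sim r^5$ yields $\int_{Q^{-}_r}\abs{m(t)-\vec u_{X_0,r}}^2\le Nr^2\norm{\nabla\vec u}_{L_2(Q^{-}_r)}^2+Nr^{-1}\norm{\vec f}_{L_1(Q^{-}_r)}^2$, and summing the three pieces gives the claim.

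I expect the main obstacle to be verifying admissibility of the test function $\chi_{(t_1,t_2)}(t)\,\eta(x)\,\vec e_k$ for weak solutions merely in $V_2(Q^{-}_r)$: this demands a standard mollification in $t$ together with a Lebesgue-point argument to recover the pointwise-in-$t$ values $m(t_1)$ and $m(t_2)$, and careful bookkeeping of powers of $r$ through each Cauchy--Schwarz step so that the final inequality has exactly the homogeneity required by the lemma.
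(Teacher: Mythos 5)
Your proof is correct and follows essentially the same route as the references the paper cites in lieu of a proof (\cite[Lemma 2.4 and 3.1]{CDK}, \cite[Lemma 8.6]{CDK10}): a three-way split around a smoothed spatial average $m(t)$, spatial Poincar\'e on slices, and testing the equation with $\chi_{(t_1,t_2)}(t)\eta(x)\vec e_k$ to control the time oscillation of $m$. The scaling bookkeeping ($\norm\eta_{L^2}\sim r^{-3/2}$, $|\nabla\eta|\lesssim r^{-4}$, $|Q^{-}_r|^{1/2}\sim r^{5/2}$) is consistent and yields exactly the stated $r^2$ and $r^{-1}$ homogeneity, and the admissibility issue you flag is handled by the usual Steklov/mollification argument since $m\in W^{1,1}(t_0-r^2,t_0)$ once $\vec u\in V_2$.
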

\begin{lemma}			\label{lem4.7}
Let $\vec u\in V_2(Q^{-}_{\lambda r})$, where $Q^{-}_{\lambda r}=Q^{-}_{\lambda r}(X_0)$ with $\lambda>1$, be a weak solution of $\vec u_t+L\vec u =\vec f$ in $Q^{-}_{\lambda r}$.
Then we have
\[
\sup_{t_0-r^2\leq t \leq t_0}\int_{B_r}\abs{\vec u(\cdot,t)}^2 + \int_{Q^{-}_r} \abs{\nabla \vec u}^2 \leq N \left(r^{-2} \int_{Q^{-}_{\lambda r}} \abs{\vec u}^2+ \norm{\vec f }_{L_{10/7}(Q^{-}_{\lambda r})}^2\right),
\]
where $N=N(\nu,\lambda)>0$.
\end{lemma}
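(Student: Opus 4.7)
The plan is a standard parabolic energy argument, with the caveat that the ``elliptic'' part of $L$ controls $|\nabla\times\vec u|^{2}+(\nabla\cdot\vec u)^{2}$ rather than $|\nabla\vec u|^{2}$ directly. First I would fix a smooth cutoff $\eta=\eta(x,t)\in C^\infty_0(Q^{-}_{\lambda r}(X_0))$ with $\eta\equiv 1$ on $Q^{-}_r(X_0)$, $0\le\eta\le 1$, and
\[
|\nabla\eta|\le N/((\lambda-1)r),\qquad |\eta_t|\le N/((\lambda-1)^2 r^2),
\]
chosen so that $\eta(\cdot,t_0-\lambda^2 r^2)=0$. Test the weak form of $\vec u_t+L\vec u=\vec f$ against $\vec\phi=\vec u\eta^{2}$ (justified by Steklov averaging, as is standard), and integrate by parts in $t$ over $(t_0-\lambda^2 r^2,\tau)$ for an arbitrary $\tau\in(t_0-r^2,t_0)$. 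Expanding $\nabla\times(\vec u\eta^{2})=\eta^{2}\nabla\times\vec u+\nabla(\eta^{2})\times\vec u$ and $\nabla\cdot(\vec u\eta^{2})=\eta^{2}\nabla\cdot\vec u+\nabla(\eta^{2})\cdot\vec u$, and using the ellipticity \eqref{eq0.2bh} plus Cauchy's inequality with $\varepsilon$ to absorb the cross terms, one obtains
\[
\sup_{t_0-r^{2}\le\tau\le t_0}\!\int_{B_{\lambda r}}\eta^{2}|\vec u(\cdot,\tau)|^{2}
+\int_{Q^{-}_{\lambda r}}\eta^{2}\bigl(|\nabla\times\vec u|^{2}+|\nabla\cdot\vec u|^{2}\bigr)
\le N\int_{Q^{-}_{\lambda r}}|\vec u|^{2}(|\nabla\eta|^{2}+|\eta_t|)+N\Bigl|\!\int_{Q^{-}_{\lambda r}}\!\vec f\cdot\vec u\,\eta^{2}\Bigr|.
\]

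Next, to pass from $|\nabla\times\vec u|^{2}+|\nabla\cdot\vec u|^{2}$ to the full gradient $|\nabla\vec u|^{2}$, apply the identity $\int_{\bR^{3}}|\nabla\vec w|^{2}=\int_{\bR^{3}}(|\nabla\times\vec w|^{2}+|\nabla\cdot\vec w|^{2})$, valid for compactly supported $\vec w$, to $\vec w=\eta\vec u$ (extended by zero). Expanding both sides, this yields
\[
\int\eta^{2}|\nabla\vec u|^{2}\le N\int\eta^{2}\bigl(|\nabla\times\vec u|^{2}+|\nabla\cdot\vec u|^{2}\bigr)+N\int|\nabla\eta|^{2}|\vec u|^{2},
\]
so combining with the display above controls $\sup_\tau\int_{B_r}|\vec u(\cdot,\tau)|^{2}+\int_{Q^{-}_r}|\nabla\vec u|^{2}$ in terms of $r^{-2}\int_{Q^{-}_{\lambda r}}|\vec u|^{2}$ and the inhomogeneous term.

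To handle the inhomogeneous term I would use H\"older's inequality together with the parabolic Sobolev (Ladyzhenskaya) embedding $V_2\hookrightarrow L_{10/3}$ in three spatial dimensions, whose H\"older dual is $L_{10/7}$:
\[
\Bigl|\!\int_{Q^{-}_{\lambda r}}\!\vec f\cdot(\vec u\eta^{2})\Bigr|
\le \norm{\vec f}_{L_{10/7}(Q^{-}_{\lambda r})}\norm{\vec u\eta^{2}}_{L_{10/3}(Q^{-}_{\lambda r})}
\le N\norm{\vec f}_{L_{10/7}(Q^{-}_{\lambda r})}\norm{\vec u\eta^{2}}_{V_2(Q^{-}_{\lambda r})}.
\]
Since $\norm{\vec u\eta^{2}}_{V_2}^{2}$ is dominated by the left-hand side of the energy inequality plus lower-order terms involving $|\nabla\eta|,|\eta_t|$, an application of Cauchy's inequality allows the $V_2$-norm to be absorbed into the LHS, leaving a clean $\norm{\vec f}_{L_{10/7}}^{2}$ on the right. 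Finally, iterating the resulting estimate on nested cylinders between $Q^{-}_r$ and $Q^{-}_{\lambda r}$ (or directly using a single cutoff layered as above) yields the desired inequality with $N=N(\nu,\lambda)$.

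The main obstacle is the third step: the ellipticity of $L$ is degenerate in the sense that it sees only $\nabla\times$ and $\nabla\cdot$, so one must be careful that the integration by parts used in the ``curl $+$ div $=$ gradient'' identity produces only harmless boundary terms supported where $\nabla\eta\neq 0$. Once this is in place, the absorption of the cross-terms and of the Sobolev-embedded $\vec f$-contribution is routine.
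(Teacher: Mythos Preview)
Your argument is correct and is precisely the standard parabolic Caccioppoli computation that the paper has in mind; in fact the paper omits the proof entirely and simply cites \cite[Lemma~2.4 and~3.1]{CDK} and \cite[Lemma~8.6]{CDK10} for exactly this estimate. One small remark: the final ``iteration on nested cylinders'' is unnecessary here, since after applying Cauchy's inequality to $\norm{\vec f}_{L_{10/7}}\norm{\eta\vec u}_{V_2}$ the $V_2$-term can be absorbed directly into the left-hand side (once you take the supremum over $\tau$ and add the full-time gradient integral, the left side \emph{is} $\norm{\eta\vec u}_{V_2}^2$ up to lower-order terms), so a single cutoff suffices.
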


With the above lemmas and Theorem~\ref{thm3.2a} at hand, we now proceed as in the proof of \cite[Theorem~3.1]{KKM} (see also proof of \cite[Theorem~3.3]{Kim}) to conclude that any weak solution $\vec v \in V_2(Q_T)$ of the system \eqref{eq4.4dp} is H\"older continuous in $Q_T$ and satisfies the estimate
\begin{equation}					\label{eq4.10mb}
[\vec v]_{\mu,\mu/2;Q^{-}_{R/2}} \leq N R^{-5/2-\mu} \norm{\vec v}_{L_2(Q^{-}_R)};\quad Q^{-}_R=Q^{-}_R(X_0),
\end{equation}
where $\mu=\mu(\nu) \in (0,1)$ and $N=N(\nu)>0$.
There is a well-known procedure to obtain H\"older estimates for weak solutions of the inhomogeneous system $\vec u_t+L\vec u = \vec f$ from the above estimate \eqref{eq4.10mb} for weak solutions of the corresponding homogeneous system $\vec u_t+L\vec u = 0$, which we shall demonstrate below for the completeness.
For $X\in Q^{-}_{R/4}(X_0)$ and $r\in (0,R/4]$, we split $\vec u=\vec v + \vec w$ in $Q^{-}_r(X)$, where $\vec w$ is the unique weak solution in $V^{1,0}_2(Q^{-}_r(X))$ of
$\vec w_t+L \vec w=\vec f$ in $Q^{-}_r(X)$ with zero boundary condition on the parabolic boundary $\partial_p Q^{-}_r(X)$.
Then, $\vec v=\vec u-\vec w$ satisfies
$\vec v_t+L \vec v=0$ in $Q^{-}_r(X)$, and thus, for $0<\rho \leq r$ (c.f. \cite[Eq.~(3.9)]{CDK}), we have
\begin{align}					\label{eq7.32}
\int_{Q^{-}_\rho(X)}\abs{\nabla \vec u}^2 &\leq  2\int_{Q^{-}_\rho(X)}\abs{\nabla \vec v}^2+2 \int_{Q^{-}_\rho(X)}\abs{\nabla \vec w}^2\\
\nonumber
&\leq  N(\rho/r)^{3+2\mu}\int_{Q^{-}_r(X)}\abs{\nabla \vec v}^2+2 \int_{Q^{-}_r(X)}\abs{\nabla \vec w}^2\\
\nonumber
&\leq  N(\rho/r)^{3+2\mu}\int_{Q^{-}_r(X)}\abs{\nabla \vec u}^2+N \int_{Q^{-}_r(X)}\abs{\nabla \vec w}^2.
\end{align}
Choose $p\in (5/2,q)$ such that $\alpha:=2-5/p<\mu$.
By the energy inequality and a parabolic embedding theorem (see \cite[\S II.3]{LSU}), we get (c.f. \cite[Eq.~(3.10)]{CDK})
\begin{equation}					\label{eq7.33}
\int_{Q^{-}_r(X)}\abs{\nabla\vec w}^2 \leq N\norm{\vec f}^2_{L_{10/7}(Q^{-}_r(X))} \leq N r^{3+2\alpha} \norm{\vec f}^2_{L_{p}(Q^{-}_{R/2})}.
\end{equation}
Combining \eqref{eq7.32} with \eqref{eq7.33}, we get for all $\rho<r \leq R/4$,
\[
\int_{Q^{-}_\rho(X)}\abs{\nabla \vec u}^2\leq N(\rho/r)^{3+2\mu}\int_{Q^{-}_r(X)}\abs{\nabla \vec u}^2+ Nr^{3+2\alpha}\norm{\vec f}^2_{L_p(Q^{-}_{R/2})}.
\end{equation*}
Then, by a well known iteration argument (see e.g., \cite[Lemma 2.1, p. 86]{Gi83}), we have
\[
\int_{Q^{-}_r(X)}\abs{\nabla \vec u}^2 \leq N(r/R)^{3+2\alpha}\int_{Q^{-}_{R/4}(X)}\abs{\nabla \vec u}^2+ N r^{3+2\alpha}\norm{\vec f}^2_{L_p(Q^{-}_{R/2})}.
\]
By Lemma~\ref{lem4.6}, the above estimate, and H\"older's inequality, we get
\[
\int_{Q^{-}_r(X)}\abs{\vec u- \vec u_{X,r}}^2\leq N r^{5+2\alpha}\left(R^{-3-2\alpha}\norm{\nabla \vec u}^2_{L_2(Q^{-}_{R/4}(X))}+\norm{\vec f}^2_{L_p(Q^{-}_{R/2})}\right).
\]
Then, by Campanato's characterization of H\"older continuous functions, we have
\[
[\vec u]_{\alpha,\alpha/2; Q^{-}_{R/4}} \leq N\left(R^{-3/2-\alpha}\norm{\nabla \vec u}_{L_2(Q^{-}_{R/2})}+\norm{\vec f}_{L_{p}(Q^{-}_{R/2})}\right).
\]
By Lemma~\ref{lem4.7} and H\"older's inequality (recall $\alpha=2-5/p$), we then obtain
\begin{equation} \label{eq10.28}
R^\alpha [\vec u]_{\alpha,\alpha/2; Q^{-}_{R/4}} \leq N\left(R^{-5/2}\norm{\vec u}_{L_2(Q^{-}_R)}+R^{2-5/q} \norm{\vec f}_{L_q(Q^{-}_{R})}\right).
\end{equation}
Similar to \eqref{eq4.41yx}, we then also obtain
\begin{equation}					\label{eq07zz}
\abs{\vec u}_{0; Q^{-}_{R/8}} \leq N\left(R^{-5/2}\norm{\vec u}_{L_2(Q^{-}_R)}+R^{2-5/q} \norm{\vec f}_{L_q(Q^{-}_{R})}\right).
\end{equation}
Finally, the desired estimate \eqref{eq4.3hi} follows from \eqref{eq10.28}, \eqref{eq07zz}, and the standard covering argument.
The theorem is proved.
\hfill\qedsymbol


\section{Appendix}					\label{sec:appendix}

\subsection{Existence of a unique weak solution of the problem \eqref{eq2.3ws}}
We prove existence of a unique weak solution in $Y^{1,2}_0(\Omega)$ of a more general problem
\begin{equation}					\label{eq9.1ax}
\left\{
\begin{array}{c}
\nabla\times (a(x)\nabla\times \vec u)-\nabla(b(x)\nabla \cdot \vec u)= \vec f  + \nabla\times \vec F + \nabla g\quad\text{in }\;\Omega,\\
\vec u=0\quad \text{on }\;\partial\Omega,
\end{array}
\right.
\end{equation}
where $\vec f \in L^{6/5}(\Omega)$ and $\vec F, g \in L^2(\Omega)$.
We say that  a function $\vec u$ is a weak solution in $Y^{1,2}_0(\Omega)$ of the problem \eqref{eq9.1ax} if $\vec u$ that belongs to $Y^{1,2}_0(\Omega)$ and satisfies the identity 
\[
\int_\Omega a (\nabla \times \vec u) \cdot (\nabla \times \vec v) + b (\nabla \cdot \vec u) (\nabla \cdot \vec v) = \int_\Omega \vec f \cdot \vec v +\vec F \cdot \nabla \times \vec v +  g \nabla \cdot \vec v, \quad \forall \vec v \in C^\infty_0(\Omega).
\]
Notice that the inequality \eqref{eqP-14} implies that the bilinear form
\begin{equation}					\label{eq9.2ax}
\ip{\vec u,\vec v}=\ip{\vec u,\vec v}_H= \sum_{i=1}^3 \int_\Omega \nabla u^i\cdot \nabla v^i
\end{equation}
defines an inner product on $H:=Y^{1,2}_0(\Omega)^3$ and that $H$ equipped with the above inner product  is a Hilbert space.
We define the bilinear form associated to the operator $L$ as
\[
B[\vec u,\vec v]:=\int_{\Omega} a (\nabla \times \vec u)\cdot (\nabla \times \vec v)+ b (\nabla \cdot \vec u)(\nabla \cdot \vec v).
\]
Then, in light of the identity \eqref{eq0.2ab}, we find that
\[
\int_\Omega \abs{\nabla \times \vec u}^2 + \abs{\nabla \cdot \vec u}^2 = \int_\Omega \abs{\nabla \vec u}^2,\quad \forall \vec u \in H.
\]
It is routine to check that the bilinear form $B$ satisfies the hypothesis of the Lax-Milgram Theorem.
On the other hand, by the inequality \eqref{eqP-14}, the linear functional
\[
F(\vec v):= \int_\Omega \vec f \cdot \vec v + \vec F \cdot \nabla\times \vec v + g \nabla \cdot \vec v
\]
is bounded on $H$.
Therefore, by the Lax-Milgram Theorem, there exists a unique element $\vec u\in H$ such that $B[\vec u, \vec v]=F(\vec v)$ for all $\vec v \in H$, which shows that $\vec u$ is a unique weak solution in $Y^{1,2}_0(\Omega)$ of the problem \eqref{eq9.1ax}.
\hfill\qedsymbol

\subsection{Existence of a unique weak solution of the problem \eqref{eq0.3cc}}
We shall assume that $\vec f \in H_{6/5}(\Omega)$ and $\vec g \in L^2(\Omega)$.
First, we consider the case when $h=0$ and construct a weak solution in $Y^{1,2}_0(\Omega)$ of the problem \eqref{eq0.3cc} as follows.
Let $H$ be the completion of $\mathcal D(\Omega)$ (see Section~\ref{sec:nd} for its definition) in the norm of $Y^{1,2}(\Omega)$.
Then $H \subset Y^{1,2}_0(\Omega)^3$ and as above, it equipped with the inner product \eqref{eq9.2ax} becomes a Hilbert space.
We define the bilinear form $B$ on $H$ by
\[
B[\vec u,\vec v]:=\int_\Omega a(\nabla \times \vec u)\cdot (\nabla \times \vec v).
\]
Then the bilinear form $B$ satisfies the hypothesis of the Lax-Milgram Theorem.
We also define the linear functional $F$ on $H$ as
\[
F(\vec v):= \int_\Omega \vec f \cdot \vec v + \vec g \cdot (\nabla \times \vec v).
\]
One can easily check that $F$ is bounded on $H$.
Therefore, by the Lax-Milgram Theorem, there exists a unique element $\vec u\in H$ such that $B[\vec u, \vec v]=F(\vec v)$ for all $\vec v \in H$.
In particular, $\vec u$ satisfies  identities \eqref{eq2.4ws} and \eqref{eq2.5ws} with $h=0$.
Therefore, $\vec u$ is a weak solution in $Y^{1,2}_0(\Omega)$ of the problem \eqref{eq0.3cc} in the case when $h=0$.

Next, we consider the case when $h\neq 0$.
In this case, we assume further that $\Omega$ is a bounded Lipschitz domain so that in particular, we have $Y^{1,2}_0(\Omega)=W^{1,2}_0(\Omega)$.
For $h \in L^2(\Omega)$ such that $\int_\Omega h=0$, let $\vec v \in W^{1,2}_0(\Omega)$ be a solution of the divergence problem
\[
\left\{
\begin{array}{c}
\nabla \cdot \vec v = h  \quad \text{in }\Omega\\
\vec v= 0 \quad \text{on }\;\partial \Omega,
\end{array}
\right.
\]
that satisfies the following estimate (see e.g., Galdi \cite[\S III.3]{Galdi})
\[
\norm{\nabla \vec v}_{L^2(\Omega)} \leq N \norm{h}_{L^2(\Omega)};\quad N=N(\Omega).
\]
Let $\vec w$ be a solution in $Y^{1,2}_0(\Omega)$ of the problem \eqref{eq0.3cc} with $\vec g-a \nabla\times \vec v$ in place of $\vec g$ and  $h=0$, which can be constructed as above. 
Then, it is easy to check that $\vec u:=\vec v + \vec w$ is a solution in $Y^{1,2}_0(\Omega)=W^{1,2}_0(\Omega)$ of the original problem \eqref{eq0.3cc}.

Finally, we prove the uniqueness of weak solutions in $Y^{1,2}_0(\Omega)$ of the problem \eqref{eq0.3cc} under the assumption that $\Omega$ is a bounded Lipschitz domain.
Notice that in that case we have $Y^{1,2}_0(\Omega)=W^{1,2}_0(\Omega)$.
Suppose $\vec u$ and $\vec v$ are two weak solutions in $W^{1,2}_0(\Omega)$ of the problem \eqref{eq0.3cc}.
Then the difference $\vec w=\vec u -\vec v$ is a weak solution in $W^{1,2}_0(\Omega)$ of the problem \eqref{eq0.3cc} with $\vec f=\vec g=0$ and $h=0$.
By the identity \eqref{eq2.5ws}, we find that $\vec w \in H$; see e.g., Galdi \cite[\S III.4]{Galdi}.
Then by the identity \eqref{eq2.4ws}, we conclude that $\vec w=0$, which proves the uniqueness of weak solutions in $Y^{1,2}_0(\Omega)$ of the problem \eqref{eq0.3cc}.
\hfill\qedsymbol

\begin{acknowledgment}
This work was supported by WCU(World Class University) program through the National Research Foundation of Korea(NRF) funded by the Ministry of Education, Science and Technology  (R31-2008-000-10049-0).
Kyungkeun Kang was supported by the  Korean Research Foundation Grant (MOEHRD, Basic Research Promotion Fund, KRF-2008-331-C00024) and the National Research Foundation of
Korea(NRF) funded by the Ministry of Education, Science and Technology (2009-0088692).
Seick Kim was supported by Basic Science Research Program through the National Research Foundation of Korea(NRF) funded by the Ministry of Education, Science and Technology (2010-0008224).
\end{acknowledgment}


\end{document}